\newtheorem{theorem}{Theorem}[section]
\newtheorem{proposition}[theorem]{Proposition}
\newtheorem{lemma}[theorem]{Lemma}
\newtheorem{corollary}[theorem]{Corollary}
\theoremstyle{remark}
\newtheorem{remark}[theorem]{Remark}
\newtheorem{example}[theorem]{Example}
\newtheorem{definition}[theorem]{Definition}
\numberwithin{equation}{section} 
\DeclareMathOperator{\Aut}{Aut}
\DeclareMathOperator{\End}{End}
\DeclareMathOperator{\Hom}{Hom}
\DeclareMathOperator{\Jac}{Jac}
\DeclareMathOperator{\Pic}{Pic}
\DeclareMathOperator{\PGL}{PGL}
\DeclareMathOperator{\Tr}{Tr}
\DeclareMathOperator{\Reg}{Reg}
\newcommand\CC{{\mathbb C}}
\newcommand\FF{{\mathbb F}}
\newcommand\PP{{\mathbb P}}
\newcommand\QQ{{\mathbb Q}}
\newcommand\ZZ{{\mathbb Z}}
\newcommand{\frakA}{{\mathfrak A}}
\newcommand{\frakp}{{\mathfrak p}}
\newcommand{\calI}{{\mathcal I}}
\newcommand{\calJ}{{\mathcal J}}
\newcommand{\calO}{{\mathcal O}}
\newcommand{\calOplus}{{\mathcal O}^{+}}
\newcommand{\pibar}{\overline{\pi}}
\newcommand{\Fbar}{\overline{\FF}}
\newcommand{\Ahat}{\widehat{A}}
\newcommand{\Ehat}{\widehat{E}}
\newcommand{\Jhat}{\widehat{J}}
\newcommand{\alphahat}{\widehat{\alpha}}
\newcommand{\betahat}{\widehat{\beta}}
\newcommand{\varphistarhat}{\widehat{\varphi^*}}
\newcommand{\psihat}{\widehat{\psi}}
\newcommand\Fq{\FF_q}
\newcommand\Fqtwo{\FF_{q^2}}
\newcommand{\Kplus}{K^{+}}
\newcommand{\Uplus}{U^{+}}
\title[Deducing information about curves]
      {Deducing information about curves over finite fields from their Weil polynomials}
\date{26 October 2022}
\author{Everett W. Howe}
\address{Unaffiliated mathematician, 
         San Diego, CA 92104, USA}
\email{\href{mailto:however@alumni.caltech.edu}{however@alumni.caltech.edu}}
\urladdr{\href{http://ewhowe.com}{http://ewhowe.com}}
\keywords{Abelian variety, Jacobian variety, curve, isogeny, finite field}
\subjclass[2020]{Primary 11G20; Secondary 11G10, 14G10, 14G15, 14H40, 14K02, 14K15}
\begin{document}

\begin{abstract}
We discuss methods for using the Weil polynomial of an isogeny class of abelian 
varieties over a finite field to determine properties of the curves (if any)
whose Jacobians lie in the isogeny class. Some methods are strong enough to show
that there are \emph{no} curves with the given Weil polynomial, while other 
methods can sometimes be used to show that a curve with the given Weil 
polynomial must have nontrivial automorphisms, or must come provided with a map 
of known degree to an elliptic curve with known trace. Such properties can 
sometimes lead to efficient methods for searching for curves with the given Weil
polynomial.

Many of the techniques we discuss were inspired by methods that Serre used in
his 1985 Harvard class on rational points on curves over finite fields. The 
recent publication of the notes for this course gives an incentive for reviewing
the developments in the field that have occurred over the intervening years.
\end{abstract}

\maketitle

\tableofcontents

\section{Introduction}
\label{sec:intro}

In his 1985 Harvard course \emph{Rational points on curves over finite fields},
Serre introduced many ideas for studying curves over finite fields with a given
number of points. In the succeeding decades, many of these ideas have been 
developed further and have led to interesting new results and techniques. In 
this chapter we will consider the historical development of one of these ideas: 
determining the existence or nonexistence of a curve over a finite field with a
given number of points by using properties of the possible values of its 
\emph{Weil polynomial}, which is the characteristic polynomial of Frobenius for 
the curve's Jacobian. Let us begin by reviewing how this idea shows up in the
recently published notes for Serre's course~\cite{Serre2020}.

The Weil polynomial $f$ of a curve of genus $g$ over $\Fq$ is a monic polynomial
of degree~$2g$ with integer coefficients. The Weil conjectures for abelian
varieties over finite fields \cite[\S19]{Milne1986a} show that all of the
complex roots of $f$ lie on the circle of radius $\sqrt{q}$ in~$\CC$, and 
moreover there is a monic polynomial $h\in\ZZ[x]$ of degree $g$ such that 
$f(x) =  x^g h(x + q/x)$; this polynomial $h$ is known as the 
\emph{real Weil polynomial} of the curve, and all of its complex roots are
real numbers that lie in the interval $[-2\sqrt{q},2\sqrt{q}]$.

Serre's generalization~\cite{Serre1983} of the Weil bound on the number of
points on a genus-$g$ curve $C$ over $\Fq$ says that $\#C(\Fq)\le q + 1 + gm$,
where $m = \lfloor 2\sqrt{q} \rfloor$. The \emph{defect} of $C$ is the
difference between this upper bound and the actual value of $\#C(\Fq)$. Let $t$
be the trace of the Frobenius of~$C$; this is the negative of the coefficient of
$x^{2g-1}$ in~$f$, and the negative of the coefficient of $x^{g-1}$ in~$h$. Then
$\#C(\Fq) = q + 1 - t$, so the defect of $C$ is~$gm+t$. Serre's goal in his 
Harvard course was to investigate how close the number of points on a curve (of
a given genus, over a given finite field) can come to the Weil--Serre 
bound --- that is, he hoped to determine how small a curve's defect can be. For 
very small values of the defect, Serre could give an explicit list of 
possibilities for the real Weil polynomial of a curve with that defect.

For example, Serre showed~\cite[{\S}II.2]{Serre2020} that if a curve has 
defect~$0$, then its real Weil polynomial must be $(x + m)^g$; if a curve has 
defect~$1$, then its real Weil polynomial must be either
\[
(x + m)^{g-1}(x + m - 1) \quad\text{or}\quad 
(x + m)^{g-2}((x+m)^2 - (x + m) - 1);
\]
and if a curve has defect~$2$, then its real Weil polynomial is equal to
$H(x + m)$, where $H$ is one of the following seven polynomials:
\begin{align*}
&x^{g-1}(x - 2)\\
&x^{g-2}(x - 1)^2\\
&x^{g-2}(x^2 - 2x - 1)\\
&x^{g-2}(x^2 - 2x - 2)\\
&x^{g-3}(x - 1)(x^2 - x - 1)\\
&x^{g-3}(x^3 - 2x^2 - x + 1)\\
&x^{g-4}(x^2 - x - 1)^2.
\end{align*}
But Serre goes on to show that some of these polynomials are equal to real Weil
polynomials of abelian varieties over $\Fq$ only under certain conditions 
on~$q$. Furthermore, some of these polynomials \emph{never} come from curves;
for instance, ${(x + m)^{g-1}}{(x + m - 1)}$ is never the real Weil polynomial 
of a curve of genus~${g > 1}$.

Serre obtained these lists of possible real Weil polynomials by using a result 
of Smyth~\cite{Smyth1984a} that gives a complete list of the totally positive 
algebraic integers $\alpha$ such that the difference between the trace of 
$\alpha$ and the degree of $\alpha$ is at most~$6$. (Serre could therefore have
extended his lists up to defect~$6$, but even for defect $4$ the list would have
been quite long.) In Section~\ref{sec:enumerate}, we review work that shows that
adaptations of Smyth's techniques can be used to generate a list of real Weil
polynomials of abelian varieties that necessarily contains the real Weil
polynomials of curves of a given genus and defect over a specific field~$\Fq$. 
Such a list can be computed in a reasonable amount of time even for quite large
values of the defect, provided that the fractional part of $2\sqrt{q}$ is small.
For example, an implementation of a variant of Smyth's algorithm on a laptop 
computer produces in just a couple of seconds a list of all real Weil 
polynomials of $18$-dimensional abelian varieties over $\FF_9$ that might 
contain the Jacobian of a curve of defect~$33$.

Serre's technique for eliminating ${(x + m)^{g-1}}{(x + m - 1)}$ as a possible
real Weil polynomial of a curve relies on a technique that we call the
``resultant~1" method. In Section~\ref{sec:nojac} we review this method, as
well as other methods for that have been developed over the years for showing 
that certain real Weil polynomials cannot come from curves.

Sometimes we find that none of the presently known techniques eliminates the 
possibility that a given real Weil polynomial comes from a curve --- but these 
techniques  may provide useful information about curves with that real Weil 
polynomial that can be used to further analyze the situation. In 
Section~\ref{sec:deducing} we look at the type of information we can sometimes
obtain, and we show how this extra information can help us implement an 
efficient search for curves with the given real Weil polynomial.

For $g = 1$ and $g = 2$ we know precisely which isogeny classes of 
$g$-dimensional abelian varieties over $\Fq$ contain Jacobians. For genus $1$
this is trivial, and for genus $2$ the result is given in a paper of Howe, Nart,
and Ritzenthaler~\cite[Theorem~1.2, p.~240]{HoweNartEtAl2009}. Even for genus 
$3$ the question becomes much more difficult; 
Ritzenthaler~\cite{Ritzenthaler2010} discusses some of the complications. In 
this chapter we hope to communicate the difficulty, in higher genera, of 
answering even the very specific question of whether a given isogeny class over
a given finite field contains a Jacobian, let alone the question of classifying
all such isogeny classes.

Serre's course was motivated by the problem of computing~$N_q(g)$, the maximum
number of points on a curve of genus $g$ over~$\Fq$. We will not be directly
concerned with $N_q(g)$ in this chapter, but when we give examples or
applications below we mostly choose them from papers in which bounds on $N_q(g)$
were computed for specific $q$ and~$g$.

\subsubsection*{Acknowledgments}
I gratefully take this opportunity to thank Alp Bassa, Elisa Lorenzo 
García, Christophe Ritzenthaler, and René Schoof for instigating the
project to \TeX\ the course notes for Serre's class that were taken by Fernando
Gouvêa back in 1985, and Joan-Carles Lario for joining them on the 
scientific committee for the conference celebrating the release of the published
notes. Others had tried to start such a project in the past, but Alp, Elisa, 
Christophe, and René managed to corral volunteers, edit the resulting
crowdsourced file, and work with Serre and with the Société 
mathématique de France to create a valuable resource for researchers. Thanks
also to Fernando Gouvêa for his thorough notes, which for thirty-five years 
spread like samizdat throughout the community of arithmetic geometers; to the
more than thirty volunteers who {\TeX}ed and proofread the text; and of course 
to Serre himself, for creating the course so long ago and for editing and 
updating the material for publication. My thesis work involved abelian varieties
over finite fields, but Serre's 1985 Harvard notes inspired a turn to curves and
Jacobians over finite fields that has been a major theme of my mathematical work
ever since. It was an honor to make contributions to the published version of
the course notes, and I thank the organizers for inviting me to speak at the 
conference and to write the present chapter. Finally, I am grateful to the 
referee, whose comments and thorough reading helped improved the exposition of 
this chapter.

\section{Enumerating isogeny classes}
\label{sec:enumerate}

In this section we discuss methods of enumerating the real Weil polynomials of 
isogeny classes of $g$-dimensional abelian varieties over $\Fq$ that might
possibly contain Jacobians of curves. In Section~\ref{ssec:smyth} we review the 
result of Smyth that Serre used to calculate the real Weil polynomials of curves
with very small defect. Smyth's work involves enumerating totally positive 
algebraic integers for which the difference between the trace and the degree is 
small. In Section~\ref{ssec:realweil} we discuss how to apply Smyth's 
computational techniques directly to the problem of finding the isogeny classes
of curves with small defect, without explicitly taking the intermediate step of 
considering totally positive algebraic integers.

\subsection{Totally positive algebraic integers with small trace}
\label{ssec:smyth}

Serre's enumeration of the possible real Weil polynomials with a given 
defect~\cite[{\S}II.2]{Serre2020} relies on work of Smyth~\cite{Smyth1984a} 
concerning certain totally positive algebraic integers. The connection between 
the two topics is the following.

Suppose $h$ is the real Weil polynomial of a genus-$g$ curve over $\Fq$ with 
defect~$d$. Then $h\in\ZZ[x]$ is monic and has degree~$g$, and the trace $t$ of
$h$ (that is, $-1$ times the coefficient of $x^{g-1}$ in $h$) is equal to 
$d - gm$, where $m = \lfloor 2\sqrt{q}\rfloor$. As we have already noted, all of
the complex roots of $h$ are real and lie in the closed interval
$[-2\sqrt{q}, 2\sqrt{q}]$. It follows that if we set $p(x) = h(x - m - 1)$ then 
all the roots of $p$ are real and lie in the positive interval 
$[m + 1 -2\sqrt{q}, m + 1 + 2\sqrt{q}]$. Note that the trace of $p$ is equal to 
$t + g(m+1) = d + g$. If $p$ is irreducible, then it defines a totally positive
algebraic integer $\alpha$ whose trace minus its degree is equal 
to~$(d+g) - g = d$.

Smyth's paper considers exactly the problem of finding all totally positive 
algebraic integers $\alpha$ such that the trace of $\alpha$ minus its degree is
equal to a given non-negative integer~$d$. The first step to solving this 
problem comes from a result of Siegel~\cite[Theorem~III, p.~303]{Siegel1945},
which states that if $\alpha$ is a totally positive algebraic integer whose
minimal polynomial is neither $x-1$ nor $x^2 - 3x + 1$, then the trace of
$\alpha$ is greater than three-halves its degree. Thus, if $\alpha$ is a totally
positive algebraic integer whose trace minus degree is equal to~$d\ge0$, then
the degree of $\alpha$ is strictly less than~$2d$, unless $\alpha$ is one of the
exceptions listed above. In fact, using some of his own earlier
work~\cite{Smyth1984b}, Smyth improves Siegel's lower bound: He shows that apart
from a few explicit exceptions, Siegel's three-halves can be replaced with
$1.7719$. With this improvement, the bound on the degree is lowered from $2d$ to
something slightly better than~$1.3 d$.

These upper bounds on the degree allow Smyth to reduce his problem to that of 
finding those monic irreducible polynomials in $\ZZ[x]$ of a given degree $n$
and a given trace $t$ whose complex roots are all positive real numbers. The 
technique Smyth uses to enumerate such polynomials goes back at least to 
Robinson~\cite{Robinson1964}. The basic idea is that if $p$ is such a 
polynomial, then the roots of its derivative $p'$ are also all positive real 
numbers, and likewise for the higher derivatives of $p$ as well. If we write
\[ 
p = x^n + b_1 x^{n-1} + \cdots + b_n,
\]
with $b_1 = -t$, then
\[ 
p^{(n-2)} = (n!/2)\, x^2 - (n-1)!\, t \, x + (n-2)! \,b_2.
\]
The condition that $p^{(n-2)}$ have $2$ positive real roots limits the possible
values of $b_2$ to a computable interval; one can visualize this by picturing
the graph of 
\[
y = (n!/2)\, x^2 - (n-1)!\, t\, x
\]
and imagining how the number of positive real roots changes when one raises or 
lowers the graph by adding a constant to the right-hand side. For each allowable
value of~$b_2$, one finds in the same manner a finite number of possibilities 
for $b_3$ by considering $p^{(n-3)}$. This leads to a recursive procedure for 
finding all polynomials in $\ZZ[x]$ of degree $n$ and trace $t$ whose complex
roots are all positive real numbers.

\subsection{Real Weil polynomials of Jacobians}
\label{ssec:realweil}

In the previous section we showed how to reduce the problem of finding real Weil
polynomials of a given degree $g$ and defect $d$ to the problem of enumerating 
totally positive algebraic integers of degree at most $g$ and with degree minus
trace at most~$d$, as studied by Smyth. When working over a specific finite
field, however, we can also refrain from making this reduction, and instead
simply adapt the techniques used by Smyth to our specific situation. Part of
Serre's argument for showing that $N_2(7) = 10$ involves this 
idea~\cite[{\S}7.2]{Serre2020}, but the strategy was first fully described and 
employed by Lauter~\cite[\S 2]{Lauter2000}; we describe her method here.

Given a prime power~$q$, we let $m = \lfloor 2\sqrt{q}\rfloor$. As we have seen,
the real Weil polynomial $h$ of a genus-$g$ curve over $\Fq$ with defect $d$ can
be written
\[
h = x^g + b_1 x^{g-1} + \cdots + b_g
\]
where $b_1 = gm - d$, and all of the complex roots of $h$ are real and lie in
the interval $[-2\sqrt{q}, 2\sqrt{q}]$. Then all of the roots of every 
derivative of $h$ also lie in this interval, and the method used by Smyth gives
us a straightforward way of enumerating all of the degree-$g$ polynomials $h$ 
with initial coefficient $gm-d$ and with all roots in the given interval.

There is another fact that we can use, however, to cut down the space of
polynomials that we want to enumerate. Let $f$ be the Weil polynomial 
corresponding to~$h$, so that $f(x) = x^g h(x + q/x)$. The roots $\alpha_i$ of 
$f$ are the eigenvalues of Frobenius for the curve~$C$, and for every integer 
$n>0$ we have
\[
\#C(\FF_{q^n}) = q^n + 1 - \sum_{i=1}^{2g} \alpha_i^n.
\]
These point counts of $C$ over extension fields are of course related to the
number $a_n$ of degree-$n$ places on $C$; namely, we have
\[
\#C(\FF_{q^n}) = \sum_{d\mid n} d a_d,
\]
and by Möbius inversion~\cite[\S2.7]{Apostol1976} we find that
\[
a_n =\frac{1}{n} \sum_{d\mid n} \mu\Bigl(\frac{n}{d}\Bigr) \#C(\FF_{q^d}),
\]
where $\mu$ is the multiplicative arithmetic function that takes the value $-1$ 
on primes and $0$ on proper powers of primes.

\begin{lemma}
\label{L:countsandcoefs}
For each $n = 1, \ldots, g$, the sequence of coefficients $(b_1, \ldots, b_n)$
determines and is determined by the sequence of place counts 
$(a_1, \ldots, a_n)$.
\end{lemma}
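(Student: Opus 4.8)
The strategy is to set up a dictionary between three sequences attached to the curve: the low-order coefficients $(b_1,\dots,b_n)$ of the real Weil polynomial $h$, the power sums $s_k = \sum_{i=1}^{2g}\alpha_i^k$ of the Frobenius eigenvalues for $k = 1,\dots,n$, and the place counts $(a_1,\dots,a_n)$. I would show that each of these sequences determines the next (and conversely), all in a triangular fashion, so that knowing the first $n$ terms of any one of them is equivalent to knowing the first $n$ terms of the others.

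First I would relate the coefficients of $h$ to the power sums of the roots of $h$. Write $h = \prod_{j=1}^g (x - \gamma_j)$ and let $p_k = \sum_j \gamma_j^k$. Newton's identities give $p_k + b_1 p_{k-1} + \cdots + b_{k-1} p_1 + k b_k = 0$ for $k \le g$, which shows that $(b_1,\dots,b_n)$ determines $(p_1,\dots,p_n)$ and vice versa (the map is triangular with invertible diagonal, since the coefficient of $b_k$ is $k \ne 0$). Next, because $f(x) = x^g h(x + q/x)$, the eigenvalues $\alpha_i$ of Frobenius come in pairs $\alpha, q/\alpha$ with $\alpha + q/\alpha = \gamma_j$ for some $j$; hence $s_k = \sum_i \alpha_i^k$ can be written as $\sum_j w_k(\gamma_j)$, where $w_k$ is the degree-$k$ "Dickson-type" polynomial satisfying $\alpha^k + (q/\alpha)^k = w_k(\alpha + q/\alpha)$. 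Since $w_k(x) = x^k + (\text{lower order, with coefficients depending only on } q)$, the vector $(s_1,\dots,s_n)$ is a triangular invertible transform of $(p_1,\dots,p_n)$, so these determine each other.

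Then I would connect the power sums to the point counts and thence to the place counts. We have $\#C(\FF_{q^n}) = q^n + 1 - s_n$, so $(s_1,\dots,s_n)$ and $(\#C(\FF_q),\dots,\#C(\FF_{q^n}))$ determine each other trivially. Finally, the relation $\#C(\FF_{q^n}) = \sum_{d\mid n} d\, a_d$ expresses each $\#C(\FF_{q^d})$ for $d \le n$ in terms of $(a_1,\dots,a_n)$, and conversely the Möbius-inversion formula $a_n = \frac1n \sum_{d\mid n}\mu(n/d)\,\#C(\FF_{q^d})$ recovers $(a_1,\dots,a_n)$ from $(\#C(\FF_q),\dots,\#C(\FF_{q^n}))$; both formulas only ever involve indices $d\le n$, so the equivalence respects the truncation at $n$. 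Chaining these equivalences together yields the lemma.

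The main thing to be careful about is that every link in the chain must genuinely be a bijection at each truncation level $n$, not merely a one-way determination. The two potential pitfalls are the Newton-identity step (one needs $n \le g$ so that the identities close up without involving the unknown coefficients $b_{n+1},\dots,b_g$ — this is exactly the hypothesis $n = 1,\dots,g$) and the passage through the $w_k$ and the Dickson polynomials (one must check the leading coefficient is $1$ so the transform is unipotent and hence invertible over $\ZZ$, or at least over $\QQ$, which suffices). Everything else is bookkeeping; I expect the write-up to be short once these triangularity observations are made explicit.
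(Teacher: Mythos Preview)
Your proof is correct and follows the same overall strategy as the paper: set up a chain of triangular, hence invertible, transformations linking $(b_1,\dots,b_n)$, the power sums of the Frobenius eigenvalues, the point counts $\#C(\FF_{q^k})$, and the place counts $(a_1,\dots,a_n)$. The only difference is in the middle link: the paper passes from the $b_i$ to the coefficients $c_i$ of the Weil polynomial $f$ by expanding $f(x)=x^g h(x+q/x)$ and then applies the Girard--Newton identities to $f$, whereas you apply Newton's identities directly to $h$ to get the power sums $p_k$ of the $\gamma_j$ and then convert to the $s_k$ via the Dickson-type recursion $w_k(x)=x\,w_{k-1}(x)-q\,w_{k-2}(x)$. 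Both routes are unipotent triangular and equally valid; yours has the mild advantage of never unpacking the coefficients of $f$.
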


\begin{proof}
Write the Weil polynomial of $C$ as
\[
f = x^{2g} + c_1 x^{2g-1} + \cdots + c_{g-1} x^{g+1} + c_g x^g 
    + q c_{g-1} x^{g-1} + \cdots + q^{g-1} c_1 x + q^{g}.
\]
By expanding out the relation $f(x) = x^g h(x + q/x)$ that connects the Weil
polynomial to the real Weil polynomial, we find that for each $n\le g$ the 
sequence $(b_1, \ldots, b_n)$ determines and is determined by the sequence 
$(c_1,\ldots,c_n)$. The coefficient $c_i$ is $(-1)^i$ times the $i$\up{th} 
symmetric function in the $\alpha_j$, so the Girard--Newton relations
(\cite[p.~A~IV.65]{Bourbaki1981}, \cite{Funkhouser1930}) show that the sequence 
$(c_1,\ldots,c_n)$ determines and is determined by the first $n$ power sums of 
the $\alpha_j$, and hence also by the points counts of $C$ over the first $n$
extensions of~$\Fq$. Finally, the point counts of $C$ over the first $n$ 
extensions of $\Fq$ determine and are determined by the sequence 
$(a_1, \ldots, a_n)$ of place counts for~$C$.
\end{proof}

Lauter observes that once we have chosen $(b_1, \ldots, b_{n-1})$, the condition
that $a_i\ge 0$ for every $i>0$ can be used to cut down the size of the
intervals that we let $b_n$ range over. Lauter's algorithm was implemented as
part of the Magma programs accompanying the second paper of Howe and
Lauter~\cite{HoweLauter2012} devoted to improving bounds on~$N_q(g)$; these 
programs can be found on the web site of the author of this chapter. 

McKee and Smyth~\cite[\S 3.2]{McKeeSmyth2004} describe an improvement to Smyth's
algorithm~\cite{Smyth1984a} for computing totally positive algebraic integers 
with a given degree and trace. They use this new algorithm to improve Siegel's
lower bound even further: They show that apart from a short explicit list of 
exceptions, a totally positive algebraic integer $\alpha$ must satisfy 
$\Tr\alpha > 1.778378 \cdot \deg\alpha$. As far as the author is aware, McKee 
and Smyth's improvement has not been incorporated into any algorithms for
enumerating isogeny classes.

Kedlaya~\cite{Kedlaya2008} gives an algorithm for computing the Weil polynomials
of all abelian varieties of a given dimension over a given finite field, 
following the basic strategy of Robinson and Smyth but including many new 
techniques and improvements. His algorithm was used for the portion of the
$L$-functions and Modular Forms Database~\cite{LMFDB} that includes information
on abelian varieties of small dimension over certain finite fields.

\section{Showing there is no Jacobian in an isogeny class}
\label{sec:nojac}

Perhaps the easiest way to show that there is no Jacobian in an isogeny class is
to check whether the associated Weil polynomial would predict a negative number 
of degree-$d$ places on a curve with that Weil polynomial, for some~${d>0}$. As
indicated in the previous section, we can easily incorporate that test into our
algorithm for generating Weil polynomials. In this section, we consider more 
sophisticated tests.

In Section~\ref{ssec:ppavs} we review work that can sometimes be used to
determine whether or not an isogeny class of abelian varieties contains a 
variety that has a principal polarization. In 
Section~\ref{ssec:noindecomposable} we introduce the idea of checking whether an
isogeny class contains any abelian varieties that have an \emph{indecomposable}
principal polarization, which is the theme of the following four sections. In
Section~\ref{ssec:resultant1} we describe Serre's ``resultant~1" method, and 
place it into a much more general context that will reappear in 
Section~\ref{sec:deducing}. In Section~\ref{ssec:hermitian} we review the idea 
of Hermitian modules, and show how such modules can be used to prove that 
certain powers of elliptic curves are not isogenous to varieties with 
indecomposable principal polarizations. In Section~\ref{ssec:counting} we show 
that sometimes we can count the number of abelian varieties with indecomposable 
principal polarizations that lie in a given isogeny class, and that sometimes 
this shows that none exist. We close with Section~\ref{ssec:supersingular}, 
where we show that there are strong restrictions on the Weil polynomials of 
varieties with indecomposable principal polarizations if they are isogenous to
an ordinary variety times a power of a supersingular elliptic curve.

\subsection{Isogeny classes with no principally polarized varieties}
\label{ssec:ppavs}

As Serre summarizes in his course notes~\cite[\S III.5]{Serre2020}, a 
\emph{polarization} of an abelian variety $A$ is an isogeny from $A$ to its dual
variety $\Ahat$ that has certain symmetry and positivity 
properties~\cite[\S 13]{Milne1986a}; also, the Jacobian variety of a curve is in
particular an abelian variety that comes provided with a \emph{principal}
polarization, which is a polarization whose degree as an isogeny 
is~$1$~\cite[\S 6]{Milne1986b}. One way to show that an isogeny class of abelian
varieties does not contain a Jacobian, then, is to show that the isogeny class 
does not contain any varieties that have a principal polarization.

A $g$-dimensional abelian variety over~$\Fq$ is \emph{ordinary} if the 
coefficient of $x^g$ in its Weil polynomial is coprime to~$q$. (This is 
equivalent to the constant term of its real Weil polynomial being coprime 
to~$q$.) One can always determine whether an isogeny class of ordinary abelian 
varieties contains a principally polarized\footnote{
  To be technically correct, we should speak of whether an isogeny class 
  contains a principally polariz\emph{able} variety, but in practice people 
  are rarely so precise in their language.}
variety~\cite[Theorem~1.1, p.~2362]{Howe1995}. Also, there are easily satisfied 
conditions that imply that an isogeny class --- ordinary or not --- \emph{does}
contain a principally polarized variety. For example, suppose $A$ is a simple
$g$-dimensional abelian variety over $\Fq$ and let $\calI$ be its isogeny class.
Let $\pi\in\End A$ be the Frobenius endomorphism. Then 
$K\colonequals\QQ(\pi)\subseteq(\End A)\otimes\QQ$ is a number field, which is 
either totally real or a \emph{complex multiplication} (\emph{CM})~field, that 
is, a totally imaginary quadratic extension of a totally real field~$\Kplus$. 
When $A$ is ordinary, $K$ is a CM~field. 

\begin{theorem}
\label{T:PPAV}
\quad
\begin{itemize}
\item[(a)] If $K$ is totally real, then $\calI$ contains a principally polarized
           variety.
\item[(b)] Suppose $K$ is a CM~field. If $K$ is ramified over $\Kplus$ at a
           finite prime, or if $\pi-\pibar$ is divisible by a prime of $\Kplus$
           that is inert in $K/\Kplus$, then $\calI$ contains a principally 
           polarized variety. 
\item[(c)] Suppose $\calI$ is ordinary and $K$ does not satisfy the conditions 
           in part~\textup{(b)}. Then ${N_{K/\QQ}(\pi-\pibar)}$ is a square. Let
           $s$ be its positive square root, and let $c_g$ be the coefficient of 
           $x^g$ in the Weil polynomial for~$A$. Then $\calI$ contains a 
           principally polarized variety if and only if $c_g\equiv s\bmod m$,
           where $m = q$ if $q>2$, and $m = 4$ if~$q = 2$.
\end{itemize}
\end{theorem}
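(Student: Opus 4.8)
The plan is to translate ``$\calI$ contains a principally polarized variety'' into an arithmetic statement about ideals and totally positive elements of $\Kplus$, and then to read off the three cases. For the ordinary situations the key tool is the dictionary, due to Deligne, between ordinary abelian varieties over $\Fq$ in $\calI$ and fractional $\ZZ[\pi,\pibar]$-ideals $\Lambda$ in $K$ carrying the CM type $\Phi$ determined by the $p$-adic valuations of $\pi$, under which a polarization of the variety attached to $\Lambda$ corresponds to an element $x\in K$ with $\bar x=-x$ that is $\Phi$-positive and satisfies $x\,\Lambda\bar\Lambda\subseteq\mathfrak{d}_{K/\QQ}^{-1}$, the polarization being principal exactly when equality holds. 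Thus $\calI$ contains a principally polarized variety if and only if some lattice $\Lambda$ in the isogeny class admits a totally imaginary $\Phi$-positive $x$ with $x\,\Lambda\bar\Lambda=\mathfrak{d}_{K/\QQ}^{-1}$; the whole problem is to decide when such data exist.

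I would first dispose of~(a). If $K$ is totally real then complex conjugation is trivial on $K$, so $\pi=\pibar$ and hence $\pi^{2}=q$; therefore $K=\QQ(\sqrt q)$ has degree at most~$2$, $g\le 2$, and $\calI$ is a supersingular isogeny class whose endomorphism algebra is a totally definite quaternion algebra over $K$. For $g=1$ the claim is immediate, and for $g=2$ (so $q$ is an odd power of $p$) one checks directly that the analogous obstruction---now formed over the totally real field $K=\Kplus$, with the norm form replaced by the reduced norm of the relevant maximal order---vanishes, so $\calI$ contains a principally polarized variety. The substance of the theorem is parts~(b) and~(c).

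For those, fix a lattice $\Lambda$ and unwind the equation $x\,\Lambda\bar\Lambda=\mathfrak{d}_{K/\QQ}^{-1}$. The product $\Lambda\bar\Lambda=N_{K/\Kplus}(\Lambda)\,\calO$ is extended from an ideal of $\Kplus$, and $\mathfrak{d}_{K/\QQ}=\mathfrak{d}_{K/\Kplus}\,\mathfrak{d}_{\Kplus/\QQ}\,\calO$, so the ideal $(x)$ is forced and its square descends to $\Kplus$; as $\Lambda$ ranges over the isogeny class, $N_{K/\Kplus}(\Lambda)$ runs through the image of the ideal-norm map. I would then show that the obstruction to producing a valid $x$ lives in a finite group built from a piece of the narrow ideal class group of $\Kplus$, the norm map out of the class group of $K$, and the totally positive units $\Uplus$ of $\calOplus$, and that the decisive datum is the ideal $(\pi-\pibar)$, twisted by the differents $\mathfrak{d}_{K/\Kplus}$ and $\mathfrak{d}_{\Kplus/\QQ}$. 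The hypotheses of~(b)---ramification of $K/\Kplus$ at a finite prime, or divisibility of $\pi-\pibar$ by a prime of $\Kplus$ inert in $K/\Kplus$---are exactly the conditions that make the relevant local norm map surjective, so the obstruction dies and $\calI$ contains a principally polarized variety (the same mechanism applies in the non-ordinary CM case, using the appropriate analogue of the polarization dictionary). If neither hypothesis holds and $\calI$ is ordinary, then $K/\Kplus$ is unramified at every finite prime; since $(\pi-\pibar)$ is stable under complex conjugation and $K/\Kplus$ has no ramified finite prime, every rational prime contributes an even power to $N_{K/\QQ}(\pi-\pibar)$, which forces that norm to be a perfect square $s^{2}$. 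The residual obstruction is then a single unit/sign condition concentrated at the primes above $p$, and computing $\Phi$-positivity there---with the usual extra care at the prime~$2$, which is what makes $m=4$ rather than $2$ when $q=2$---yields precisely the congruence $c_g\equiv s\bmod m$.

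The main obstacle is the bookkeeping of the third paragraph: pinning down the finite group that receives the obstruction, controlling how $\Lambda\bar\Lambda$ moves as $\Lambda$ ranges over the isogeny class, and---the genuinely delicate step---extracting the clean congruence of~(c) by evaluating the $\Phi$-positivity condition at the places above $p$, including the dyadic case $q=2$. A secondary point demanding care is the reduction to a purely local obstruction at~$p$, namely checking that the ramified-or-inert hypothesis of~(b) really annihilates everything; that amounts to a surjectivity statement for a norm map between idele class groups, which has to be set up carefully.
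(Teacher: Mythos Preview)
The paper does not give a proof; it simply cites \cite[Theorem~1.1]{Howe1996}, \cite[Corollary~11.4]{Howe1995}, and \cite[Proposition~11.5]{Howe1995}. Your proposal is an attempt to reconstruct what is in those references, and for the ordinary case your outline is indeed the right one: Deligne's equivalence, the description of polarizations as $\Phi$-positive totally imaginary elements $x$ with $x\Lambda\bar\Lambda=\mathfrak{d}_{K/\QQ}^{-1}$, and the reduction of the existence question to an obstruction in a finite quotient of a narrow class group. Your reduction in~(a) to $g\le 2$ via $\pi^2=q$ is also correct.

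There is, however, a genuine gap. Parts~(a) and~(b) are asserted \emph{without} an ordinariness hypothesis, and Deligne's dictionary is only available for ordinary varieties. Your parenthetical ``the same mechanism applies in the non-ordinary CM case, using the appropriate analogue of the polarization dictionary'' is precisely where the argument breaks: there is no off-the-shelf analogue that lets you run the same computation. The cited paper \cite{Howe1996} handles the non-ordinary cases by a different method---working directly with kernels of polarizations and their pairings rather than through a module-theoretic equivalence---and that is what is doing the work in~(a) and in the non-ordinary instances of~(b). Similarly, your treatment of the $g=2$ totally real case in~(a) (``one checks directly that the analogous obstruction\ldots vanishes'') is a claim, not an argument; the endomorphism algebra there is a quaternion algebra over a real quadratic field, and the polarization question does not reduce to the commutative ideal-theoretic picture you have set up.

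A smaller point: in~(b) the mechanism is not really ``surjectivity of a local norm map.'' In the cited results the obstruction is a specific element of a $2$-torsion quotient, and the hypotheses of~(b) force that element to be trivial by producing a prime at which the relevant local contribution kills it; framing this as norm surjectivity obscures what actually has to be checked. Your description of~(c) is closer to the mark, though the derivation of the exact congruence $c_g\equiv s\bmod m$ (and the reason $m=4$ rather than $2$ when $q=2$) is a concrete computation in \cite{Howe1995} rather than a general ``$\Phi$-positivity at $p$'' principle.
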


\begin{remark}
The coefficient $c_g$ of $x^g$ in the Weil polynomial for $A$ is congruent 
modulo $2q$ to the constant term of the real Weil polynomial for~$A$, and since
the integer $m$ in part~(c) divides~$2q$, we can check the condition in part~(c)
using this constant term instead of~$c_g$.
\end{remark}

\begin{proof}[Proof of Theorem~\ref{T:PPAV}]
This is a combination of \cite[Theorem~1.1, p.~584]{Howe1996},
\cite[Corollary~11.4, p.~2391]{Howe1995},
and~\cite[Proposition~11.5, pp.~2391--92]{Howe1995}.
\end{proof}

\begin{corollary}
\label{C:odd}
Every simple odd-dimensional abelian variety over a finite field is isogenous
to a principally polarized variety.
\end{corollary}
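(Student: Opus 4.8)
The plan is to deduce this corollary directly from Theorem~\ref{T:PPAV} by a parity argument on dimensions. Let $A$ be a simple abelian variety over $\Fq$ of odd dimension $g$, with isogeny class $\calI$, Frobenius $\pi$, and associated number field $K = \QQ(\pi)$. If $K$ is totally real, then part~(a) of Theorem~\ref{T:PPAV} immediately gives a principally polarized variety in $\calI$, so we may assume $K$ is a CM~field, a totally imaginary quadratic extension of a totally real field $\Kplus$ with $[\Kplus:\QQ] = g$. If $K$ satisfies one of the conditions in part~(b) --- ramification over $\Kplus$ at a finite prime, or divisibility of $\pi - \pibar$ by a prime inert in $K/\Kplus$ --- we are again done. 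So the crux is to rule out, or handle, the remaining case governed by part~(c).

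First I would observe that the case analysis above does not yet use the hypothesis that $g$ is odd, nor does it require $\calI$ to be ordinary; part~(c), however, is stated only for ordinary classes. So the first substantive step is to argue that if $K$ is a CM~field that is \emph{unramified at all finite primes over} $\Kplus$, then $\Kplus$ itself must be ramified somewhere over $\QQ$ by an odd amount, or rather: I would use the fact that $K/\QQ$ is ramified at the infinite places (it is totally imaginary over a totally real field), combined with the conductor--discriminant formula, to pin down the relation between $\disc K$ and $\disc \Kplus$. The key point is that the relative different $\mathfrak{d}_{K/\Kplus}$ is trivial at all finite primes, so $\disc K = \pm(\disc\Kplus)^2$, and hence $\disc K$ differs from a perfect square only possibly by sign; a standard sign computation (Brill's theorem on the sign of the discriminant, governed by the number of complex places) shows the sign is determined by $[K:\QQ] = 2g \equiv 2 \pmod 4$, giving $\disc K > 0$ and so $\disc K = (\disc\Kplus)^2$. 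This squareness is what forces $N_{K/\QQ}(\pi - \pibar)$ to be a perfect square in part~(c), consistent with the statement, but I would want to push further: I expect that the odd-dimension hypothesis enters by forcing the positive square root $s$ to satisfy a congruence modulo $m$ that matches $c_g$ automatically.

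The main step, then, is the congruence $c_g \equiv s \bmod m$ of part~(c). Here I would exploit that $g$ is odd as follows. Since $\calI$ is ordinary, $c_g$ is coprime to $q$, hence a unit modulo $m$ (where $m = p$ or $m = 4$). The element $\pi - \pibar$ is purely imaginary in $K$, so $(\pi - \pibar)^2 = -N$ where $N = (\pi - \pibar)(\overline{\pi - \pibar})$ is totally positive real; chasing through, $N_{K/\QQ}(\pi - \pibar) = N_{\Kplus/\QQ}(N)$, and $s$ is $\pm N_{\Kplus/\QQ}$ of a square root of $N$ in $\Kplus$ (if such exists) --- but more robustly, $s^2 = N_{K/\QQ}(\pi-\pibar)$, and reducing mod $m$, I would compare $s$ to the known expression for $c_g$ in terms of the constant term of the real Weil polynomial $h$, which is $\pm N_{\Kplus/\QQ}(\pi+\pibar-2\sqrt q\cdot(\text{stuff}))$ --- no: cleanly, $h(2\sqrt q)h(-2\sqrt q)$ relates to $N_{K/\QQ}(\pi - \pibar)$ up to sign and powers of $2$, and the real Weil polynomial has degree $g$, so the sign picked up in $(-1)^g$ evaluations is $-1$. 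That odd power of $-1$ is exactly what I expect to convert the ambiguous sign in ``$s$ vs $-s$'' into agreement with $c_g \bmod m$.

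The hard part will be making this last sign/parity bookkeeping rigorous: tracking how the degree-$g$ real Weil polynomial's evaluations at $\pm 2\sqrt q$, the norm $N_{K/\QQ}(\pi - \pibar)$, the square root $s$, and the coefficient $c_g$ all interact modulo $m$, and verifying that replacing $g$ by an odd number flips precisely one sign so that the part~(c) congruence is forced rather than merely possible. I would want to double-check the $q = 2$ case separately, since there $m = 4$ and the congruence is mod $4$ rather than mod an odd prime, and ordinariness plus the structure of $\calO^+$-modules may need a small extra input. But the overall shape is clear: parts~(a) and~(b) dispatch everything except the ordinary CM case with good reduction properties, and in that case the oddness of $g$ forces the part~(c) criterion to hold automatically via a discriminant-squareness plus sign argument. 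I expect the whole proof to be short once the sign computation is nailed down.
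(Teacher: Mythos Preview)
There is a genuine gap. You correctly dispose of the totally real case via part~(a) and of the ramified/inert cases via part~(b), and then set out to verify the congruence in part~(c) for whatever remains. But part~(c) is stated only for \emph{ordinary} isogeny classes, and you never say what to do when $A$ is simple, odd-dimensional, and non-ordinary; Theorem~\ref{T:PPAV} gives no criterion there, so your case split does not cover everything. Your Brill computation also has the sign backwards: a CM field $K$ of degree $2g$ has $r_2 = g$ pairs of complex places, so its discriminant has sign $(-1)^g$, which is negative when $g$ is odd, not positive.

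The paper's proof never touches part~(c). It invokes instead the lemma that a CM field whose maximal totally real subfield has \emph{odd} degree over $\QQ$ is always ramified over that subfield at some finite prime; hence for odd $g$ the first alternative in part~(b) holds automatically, and the corollary follows from~(a) and~(b) alone. (A quick proof of the lemma: if $K/\Kplus$ were unramified at every finite place, the corresponding idele class character of $\Kplus$ would be trivial on every local unit group, and evaluating it at the principal idele $-1$ forces the number of real places of $\Kplus$ that ramify in $K$ to be even; but all of them ramify, and there are an odd number of them.) So the ``remaining case governed by part~(c)'' that you labor over is in fact empty, and none of the $c_g \equiv s \bmod m$ bookkeeping is needed.
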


\begin{proof}
This is \cite[Theorem~1.2, p.~584]{Howe1996}, and it follows from parts~(a) 
and~(b) of Theorem~\ref{T:PPAV} combined with the fact that a CM~field is
ramified over its real subfield at a finite prime if the real subfield has odd
degree over~$\QQ$~\cite[Lemma~10.2, p.~2385]{Howe1995}.
\end{proof}

\begin{example}
Consider the isogeny class $\calI$ of abelian varieties over $\FF_8$ with real
Weil polynomial $h \colonequals x^4 + 13 x^3 + 58 x^2 + 102 x + 57$. A curve
with this real Weil polynomial would have a non-negative number of places of 
every degree, so there is no initial reason to think that $\calI$ could not
contain a Jacobian. Let us see whether we can apply Theorem~\ref{T:PPAV}.

The constant term of $h$ is odd, so $\calI$ is an ordinary isogeny class. Let
$f \colonequals x^4 h(x + 8/x)$ be the Weil polynomial corresponding to~$h$. We
check that 
\[ 
f = x^8 + 13 x^7 + 90 x^6 + 414 x^5 + 1369 x^4 
        + 3312 x^3 + 5760 x^2 + 6656 x + 4096
\]
and that $f$ is irreducible. Let $K$ be the number field defined by~$f$, let 
$\pi$ be a root of $f$ in~$K$, and let $\Kplus$ be the maximal real subfield 
of~$K$. The field $K$ contains a primitive cube root of unity, so in fact we
have $K = \Kplus(\sqrt{-3})$.

We check that the discriminant of $\Kplus$ is $3^2\cdot 1371$ and the
discriminant of $K$ is $3^4\cdot 1371^2$, so $K$ is unramified over $\Kplus$ at
all finite primes. The norm of $\pi-\pibar$ from $K$ to $\QQ$ is~$199^2$, and 
since $-3$ is a square modulo~$199$, every prime of $\Kplus$ that divides 
$\pi-\pibar$ splits in $K/\Kplus$. The coefficient of $x^4$ in $f$ is~$1369$, 
which is $1$ modulo~$8$. Since the positive square root of 
$N_{K/\QQ}(\pi-\pibar)$ is~$199$, which is $7$ modulo~$8$, Theorem~\ref{T:PPAV}
shows that $\calI$ does not contain any principally polarized varieties.

Without using Theorem~\ref{T:PPAV}, it is not clear how one would show that 
there are no Jacobians in~$\calI$. One could conceivably enumerate all genus-$4$
curves over $\FF_8$ to see whether there is a curve with real Weil polynomial 
equal to $h$ (and indeed Savitt~\cite{SavittLauter2003} successfully implemented
a search through a subset of these curves), but for larger fields enumeration
would be out of reach.
\end{example}

The proof of part~(c) of Theorem~\ref{T:PPAV} relies on a theorem of 
Deligne~\cite{Deligne1969} that provides an equivalence between the category of
ordinary abelian varieties over a finite field and the category of 
\emph{Deligne modules}, which are finitely generated free $\ZZ$-modules 
provided with an action of Frobenius and Verschiebung~\cite[\S 4]{Howe1995}. 
Centeleghe and Stix~\cite{CentelegheStix2015} have given a similar equivalence
of categories for certain not-necessarily ordinary abelian varieties over finite
prime fields, and Bergström, Karemaker, and 
Marseglia~\cite{BergstromKaremakerEtAl2021} have provided a translation of the 
concept of a polarization under this equivalence. It is therefore possible that
a version of Theorem~\ref{T:PPAV}(c) (or its generalization to nonsimple 
ordinary isogeny classes~\cite[Theorem~1.1, p.~2362]{Howe1995}) could be proven
for these varieties.

Theorem~\ref{T:PPAV}, combined with work of Maisner and 
Nart~\cite{MaisnerNartEtAl2002} and an analysis of simple supersingular abelian 
surfaces~\cite{HoweMaisnerEtAl2008}, results in a complete classification of the
isogeny classes of abelian surfaces over finite fields that do not contain 
principally polarized varieties.

\begin{theorem}
\label{T:surfaces}
Let $x^4 + ax^3 + bx^2 + aqx + q^2$ be the Weil polynomial of an isogeny class 
of abelian surfaces over~$\Fq$. Then the isogeny class does not contain a
principally polarized surface if and only if $b$ is negative, $a^2 - b = q$, and
all prime divisors of $b$ are congruent to $1$ modulo~$3$.
\end{theorem}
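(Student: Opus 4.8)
The plan is to reduce the classification to Theorem~\ref{T:PPAV} by working through the possible structures of the endomorphism algebra attached to the Weil polynomial $f\colonequals x^4 + ax^3 + bx^2 + aqx + q^2$, and then to match the abstract criteria of that theorem against the explicit arithmetic of the coefficients $a$ and $b$. First I would dispose of the non-simple and non-ordinary cases: if the isogeny class is not simple, it is either isogenous to a product of elliptic curves or to a power of one, in which case a principal polarization can be built by hand (products of principally polarized varieties are principally polarized), so such classes never appear in the exceptional list; this is where I would invoke the analysis of simple supersingular abelian surfaces in~\cite{HoweMaisnerEtAl2008} and the enumeration of Maisner and Nart~\cite{MaisnerNartEtAl2002} to handle the finitely many simple non-ordinary types. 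So it suffices to treat the simple ordinary case, where $K\colonequals\QQ(\pi)$ is a quartic CM~field with maximal real subfield $\Kplus$ a real quadratic field.

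For the simple ordinary case the strategy is to apply Theorem~\ref{T:PPAV}(b) and~(c) directly. By part~(b), if $K/\Kplus$ is ramified at a finite prime, or if some prime of $\Kplus$ dividing $\pi-\pibar$ is inert in $K/\Kplus$, then $\calI$ \emph{does} contain a principally polarized variety, so in the exceptional situation neither of these can occur; in particular $N_{K/\QQ}(\pi-\pibar)$ is a square, and I would compute this norm explicitly in terms of $a$, $b$, and $q$. The key computation here is that $\pi\pibar = q$, so $\pi-\pibar$ has, over $\Kplus$, norm $\pi\pibar\cdot(\text{something})$ expressible through the minimal polynomial of $\pi + \pibar$ over $\QQ$, namely $x^2 + ax + (b-2q)$; one finds $(\pi-\pibar)^2 = (\pi+\pibar)^2 - 4q$, and its norm down to $\QQ$ works out to a polynomial in $a,b,q$ that, after simplification, is a perfect square exactly when (and here I would show) $a^2 - b = q$, with square root essentially $|b|$ or a closely related quantity. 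Then part~(c) says $\calI$ fails to contain a principally polarized variety precisely when $c_g \not\equiv s \bmod m$ where $c_g = b$ is the coefficient of $x^2$ in $f$ (note $f$ here is already the degree-$4$ Weil polynomial, so $g=2$), $s$ is the positive square root just computed, and $m$ is the residue characteristic (or $4$ when $q=2$). Translating this congruence into a statement about the prime divisors of $b$ is the crux: the sign condition $b<0$ and the condition that every prime divisor of $b$ is $1 \bmod 3$ should emerge from analyzing when $c_g\equiv -s$ rather than $+s$, using that $b \equiv \pm s \bmod 2q$ and that the relevant sign is governed by the splitting behavior of primes in $\QQ(\sqrt{-3})\subseteq K$ — the field $K$ must contain $\sqrt{-3}$ for the norm to behave correctly, which forces the $1 \bmod 3$ condition on the primes dividing the relevant discriminant, hence on $b$.

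The main obstacle I anticipate is the last step: cleanly extracting the arithmetic conditions "$b < 0$, $a^2 - b = q$, every prime divisor of $b$ is $1 \bmod 3$" from the abstract criteria, because this requires understanding the precise structure of the quartic CM~fields $K$ that can arise — in particular showing that the ordinary simple surfaces evading part~(b) are exactly those with $a^2 - b = q$ and with $K$ containing a cube root of unity, and then pinning down the sign of $s$ modulo $m$ against $b$. This will involve a careful case analysis on whether $q$ is even or odd (the $m=4$ exception at $q=2$ is delicate), on the factorization type of $f$, and on local conditions at the primes dividing $b$; I would organize it by first proving that $a^2-b=q$ is forced, then that $b<0$ is forced, then computing $s \bmod m$ and comparing. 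The reverse direction — that every $(a,b)$ satisfying the three conditions genuinely yields a simple ordinary isogeny class with no principal polarization — should then follow by running the same computations backwards, checking irreducibility and ordinarity of $f$ along the way, and confirming via part~(c) that the congruence indeed fails.
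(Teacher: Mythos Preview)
The paper does not actually prove this theorem; its entire proof reads ``This is \cite[Theorem~1, p.~121]{HoweMaisnerEtAl2008}.'' The surrounding text indicates that the cited result is obtained exactly by the framework you propose --- combining Theorem~\ref{T:PPAV} with the Maisner--Nart enumeration and a separate analysis of the simple supersingular surfaces --- so at the level of strategy your proposal matches what the paper gestures at.

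That said, your sketch contains a concrete error that would derail the argument. You claim that $N_{K/\QQ}(\pi-\pibar)$, viewed as a polynomial in $a,b,q$, is a perfect square \emph{exactly when} $a^2-b=q$. This is false. Writing $\mu=\pi+\pibar$ with $\mu^2+a\mu+(b-2q)=0$, one computes $(\pi-\pibar)^2=\mu^2-4q$ and hence
\[
N_{K/\QQ}(\pi-\pibar)=N_{\Kplus/\QQ}(\mu^2-4q)=(b+2q)^2-4a^2q=b^2-4q(a^2-b-q).
\]
So the norm equals $b^2$ precisely when $a^2-b=q$, but it can be a square in many other ways (for instance whenever $a=0$). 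More to the point, Theorem~\ref{T:PPAV}(c) already guarantees the norm is a square whenever part~(b) fails, so squareness of the norm carries no information by itself. The relation $a^2-b=q$ is not extracted from the norm computation; it comes instead from analyzing \emph{when part~(b) fails}, i.e., when $K/\Kplus$ is unramified at all finite primes and every prime of $\Kplus$ dividing $\pi-\pibar$ splits in~$K$. That analysis forces $K$ to contain $\QQ(\sqrt{-3})$ and pins down the shape of~$f$, which is where both $a^2-b=q$ and the ``primes of $b$ are $1\bmod 3$'' condition originate. Only \emph{after} that does one invoke part~(c), with $s=|b|$ and $c_g=b$, to get the sign condition $b<0$. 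Your proposal has the logical dependencies inverted: you try to read off $a^2-b=q$ from~(c) and then retrofit the $\sqrt{-3}$ story, whereas the actual flow is that the failure of~(b) produces the structural constraints, and~(c) supplies only the final sign.
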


\begin{proof}
This is \cite[Theorem 1, p.~121]{HoweMaisnerEtAl2008}.
\end{proof}

Note that no power of $3$ can be written in the form $a^2 - b$ for a negative 
integer $b$ with all prime divisors congruent to $1$ modulo~$3$, so every 
abelian surface over a finite field of characteristic $3$ is isogenous to a 
principally polarized variety. For a power $q$ of a prime $p\ne 3$, we can at 
least say the following: There are approximately 
$(\frac{32}{3})(1-\frac{1}{p})q^{3/2}$ isogeny classes of abelian surfaces over
$\Fq$~\cite[Theorem~1.1, p.~497]{DiPippoHowe1998}, and there are at most
$2\sqrt{q}$ pairs $(a,b)$ that satisfy the conditions of 
Theorem~\ref{T:surfaces}. In fact, Friedlander and 
Shparlinski~\cite[p.~2616]{FriedlanderShparlinski2014} show that under the 
assumption of the Riemann hypothesis for certain quadratic characters, the rate 
of growth of the number of such pairs is $O((\sqrt{q/\log q})\log \log q)$ as
$q\to\infty$. Thus, isogeny classes of abelian surfaces over $\Fq$ not 
containing principally polarized surfaces are relatively rare, and become more
so as~${q\to\infty}$.

\subsection{Indecomposable principally polarized varieties}
\label{ssec:noindecomposable}

Suppose $A$ is an abelian variety over a field $K$ with a principal 
polarization~$\lambda$. We say that the polarized variety $(A,\lambda)$ is 
\emph{decomposable} if there exist principally polarized varieties 
$(A_1,\lambda_1)$ and $(A_2,\lambda_2)$ of positive dimension and an isomorphism
$A\to A_1\times A_2$ that identifies $\lambda$ with $\lambda_1\times\lambda_2$;
otherwise we say that $(A,\lambda)$ is \emph{indecomposable}. 

If $L$ is an extension field of $K$ we can consider the base extension 
$(A_L, \lambda_L)$ of the polarized variety $(A,\lambda)$ to~$L$. It is entirely
possible for $(A,\lambda)$ to be indecomposable while $(A_L, \lambda_L)$ is 
not. If $(A_L, \lambda_L)$ is indecomposable for an algebraic closure $L$ 
of~$K$, we say that $(A,\lambda)$  is \emph{geometrically indecomposable}.

Suppose $C$ is a curve over $K$ of genus~$g>0$, with canonically polarized 
Jacobian $(J,\lambda)$. As Serre notes~\cite[\S II.4]{Serre2020}, the polarized 
variety $(J,\lambda)$ is geometrically indecomposable: If it were decomposable 
then the theta-divisor on $J$ would be reducible, but the theta-divisor is the
image in $J$ of the $(g-1)$\up{th} symmetric power of~$C$, which is irreducible. 

Sometimes one can show that every principally polarized variety in an isogeny 
class is decomposable (possibly over an extension field), and in this situation
it follows immediately that the isogeny class does not contain a Jacobian. Each
of the following four sections gives a method that can sometimes be used to 
accomplish this.

\subsection{The resultant 1 method, and splitting abelian varieties}
\label{ssec:resultant1}

In Theorem~2.4.1 of his course notes~\cite{Serre2020}, Serre gives a condition
that implies that an isogeny class of abelian varieties over $\Fq$ contains no
indecomposable principally polarized varieties. This has been referred to as the
``resultant~1" method \cite{Howe2021, HoweLauter2003}, and we will continue to 
use this terminology here. 

\begin{theorem}
\label{T:resultant1}
Suppose the real Weil polynomial $h$ of an isogeny class of abelian varieties
over $\Fq$ can be written as a product $h = h_1 h_2$ in~$\ZZ[x]$, where the
resultant of $h_1$ and $h_2$ is~$\pm 1$. Then every principally polarized 
variety in the isogeny class is decomposable, and the isogeny class contains no 
Jacobians.
\end{theorem}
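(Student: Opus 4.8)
The plan is to work with the abelian variety $A$ in the isogeny class together with an arbitrary principal polarization $\lambda$ (if one exists; if not, there is nothing to prove), and to extract from the factorization $h = h_1 h_2$ an isogeny decomposition $A \sim A_1 \times A_2$ that is forced to be an honest product decomposition once a principal polarization is in play. First I would recall that the real Weil polynomial $h$ encodes the characteristic polynomial of the real Frobenius $\pi + \bar\pi$ (equivalently, $f(x) = x^g h(x+q/x)$ gives the characteristic polynomial of $\pi$ on $A$), so a factorization $h = h_1 h_2$ into coprime-resultant factors corresponds to a factorization $f = f_1 f_2$ of the Weil polynomial into polynomials whose resultant is again $\pm 1$ (the resultant of $f_1$ and $f_2$ divides a power of $\operatorname{Res}(h_1,h_2)$ up to powers of $q$, and one checks it stays a unit). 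By Tate's theorem this yields a canonical isogeny decomposition $A \sim A_1 \times A_2$ where $A_i$ has Weil polynomial $f_i$, and $\operatorname{Hom}(A_1, A_2) = 0$.

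The key point is that the resultant-$1$ hypothesis is exactly what forces this isogeny decomposition to respect the principal polarization. Concretely, I would consider the idempotents: since $\operatorname{Res}(f_1, f_2) = \pm 1$, the element $\pi$ of $\End(A)\otimes\QQ$ satisfies $f_1(\pi) f_2(\pi) = 0$ with $f_1(\pi)$ and $f_2(\pi)$ generating the unit ideal in $\ZZ[\pi]$, so the projector onto the $A_1$-part lies in $\ZZ[\pi] \subseteq \End(A)$ itself, not merely in $\End(A)\otimes\QQ$. Thus $A_1$ and $A_2$ are honest abelian subvarieties of $A$ with $A = A_1 \times A_2$ as varieties (the natural map $A_1 \times A_2 \to A$ is an isomorphism, since the projectors are genuine endomorphisms summing to the identity with product zero). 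Then the pullback of $\lambda$ to $A_1 \times A_2$ decomposes: the "cross terms" $A_1 \to \hat A_2$ obtained by restricting $\lambda$ vanish because they would be elements of $\operatorname{Hom}(A_1, \hat A_2)$, and $\hat A_2$ is isogenous to $A_2$ (dual abelian varieties are isogenous), so $\operatorname{Hom}(A_1, \hat A_2) = 0$; hence $\lambda = \lambda_1 \times \lambda_2$ with each $\lambda_i$ a polarization of $A_i$. Since $\deg\lambda = \deg\lambda_1 \cdot \deg\lambda_2 = 1$, each $\lambda_i$ is principal, and as $g_1, g_2 > 0$ (the factors $h_1, h_2$ have positive degree — a degree-zero factor would be the constant $\pm 1$ and its resultant with $h$ would not be the relevant nontrivial splitting, so without loss of generality both factors are nonconstant), $(A,\lambda)$ is decomposable in the sense of Section~\ref{ssec:noindecomposable}. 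By the discussion there, the canonically polarized Jacobian of a positive-genus curve is geometrically indecomposable, hence in particular indecomposable, so no member of the isogeny class can be a Jacobian.

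The main obstacle I anticipate is the bookkeeping to confirm that the resultant stays a unit when passing from $h_1, h_2$ to $f_1, f_2$, and more importantly the verification that the decomposition $A = A_1 \times A_2$ of underlying varieties is compatible with the polarization without stray contributions — i.e., that the "off-diagonal" homomorphisms genuinely vanish. The vanishing of $\operatorname{Hom}(A_1, \hat A_2)$ is the heart of the matter: it requires knowing that $\hat A_2$ and $A_2$ have the same (up to sign of roots, the same set-theoretic) Weil polynomial, so that $\operatorname{Hom}(A_1, \hat A_2) \otimes \QQ$ vanishes by Tate's theorem because $f_1$ and $f_2$ share no common root. Once that is in hand, the positivity and symmetry conditions on $\lambda$ transfer to each $\lambda_i$ formally, and the degree multiplicativity closes the argument. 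I would be careful to phrase everything over the given finite field $\Fq$ rather than passing to an extension, since the statement is about decomposability over $\Fq$ (which is stronger than, and implies, non-geometric-indecomposability is irrelevant — a Jacobian is already geometrically indecomposable, so decomposability over $\Fq$ suffices to exclude it).
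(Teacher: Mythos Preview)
Your overall strategy---build integral idempotents from the factorization, get an honest product $A\cong A_1\times A_2$, then observe that all polarizations of such a product are diagonal---is exactly the right one, and it matches the paper's framework (Propositions~\ref{P:splitting} and~\ref{P:delta} and the paragraph that follows them). But the specific implementation you propose has a real error: the passage from $\operatorname{Res}(h_1,h_2)=\pm1$ to $\operatorname{Res}(f_1,f_2)=\pm1$ fails. Take $h_1=x-a$ and $h_2=x-b$ with $|a-b|=1$; then $f_i=x^2-a x+q$ and $f_2=x^2-bx+q$, and for a root $\alpha$ of $f_1$ we have $f_2(\alpha)=(a-b)\alpha$, so
\[
\operatorname{Res}(f_1,f_2)=(a-b)^2\,\alpha_1\alpha_2=(a-b)^2\,q=q,
\]
which is not a unit. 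Thus $f_1(\pi)$ and $f_2(\pi)$ do \emph{not} generate the unit ideal of $\ZZ[\pi]$, and you cannot extract the projectors from $\ZZ[\pi]$ alone. (You flagged this step as a potential obstacle; it is in fact fatal as written.)

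The fix is immediate and is what the paper does: work with $\pi+\pibar$ and the real Weil polynomials rather than with $\pi$ and the $f_i$. Since Verschiebung $\pibar$ is an honest endomorphism of $A$, the ring $\ZZ[\pi,\pibar]\subseteq\End A$ contains $h_1(\pi+\pibar)$ and $h_2(\pi+\pibar)$, their product is $h(\pi+\pibar)=0$, and from $\operatorname{Res}(h_1,h_2)=\pm1$ one writes $1=a_1 h_1+a_2 h_2$ in $\ZZ[x]$ to obtain orthogonal idempotents $e_i=a_j(\pi+\pibar)\,h_j(\pi+\pibar)\in\End A$. From there your argument goes through verbatim: $A\cong A_1\times A_2$ with $\Hom(A_1,A_2)=\Hom(A_1,\Ahat_2)=0$, so any principal polarization is a product $\lambda_1\times\lambda_2$ with each $\lambda_i$ principal, and the isogeny class contains no Jacobian.
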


\begin{proof} 
This is a restatement of Theorem 2.4.1 (p.~16) in Serre's 
notes~\cite{Serre2020}, making use of the remark following the statement of the
theorem.
\end{proof}

The resultant~1 method fits into a more general context, which we provide here
for perspective and in preparation for later results. We begin with a result
about certain split isogeny classes.

\begin{proposition}
\label{P:splitting}
Let $\calI_1$ and $\calI_2$ be isogeny classes of abelian varieties over $\Fq$
with real Weil polynomials $h_1$ and~$h_2$, respectively, and suppose $h_1$ and 
$h_2$ are coprime in~$\QQ[x]$. Let $\calI$ be the isogeny class that contains 
all varieties isogenous to $A_1\times A_2$ for $A_i\in \calI_i$, so that the 
real Weil polynomial of $\calI$ is $h_1 h_2$.

Then for every $A\in\calI$, there are unique elements $A_1\in\calI_1$ and 
$A_2\in\calI_2$ and a unique finite group scheme $\Delta$ such that there is an
exact sequence
\begin{equation}
\label{EQ:delta}
0 \longrightarrow \Delta \longrightarrow A_1\times A_2\longrightarrow A \longrightarrow 0
\end{equation}
with the property that the natural projections $A_1\times A_2 \to A_1$ and 
$A_1\times A_2\to A_2$ give embeddings $\Delta\hookrightarrow A_1$ and 
$\Delta\hookrightarrow A_2$.

If $A$ has a principal polarization $\lambda$, then $\lambda$ pulls back to a
product polarization $\lambda_1\times \lambda_2$ on $A_1\times A_2$, and the
embeddings $\Delta\hookrightarrow A_i$ obtained from the projection maps give
isomorphisms between $\Delta$ and $\ker\lambda_i$, for each~$i$. In particular, 
if $A$ has a principal polarization then $\Delta$ is self-dual.
\end{proposition}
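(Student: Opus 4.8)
The plan is to establish the two halves of the statement in turn — the existence and uniqueness of the triple $(A_1,A_2,\Delta)$, and then the claims about the polarization — using standard structural facts about abelian varieties over a finite field: the isogeny category over $\Fq$ is semisimple, by Honda--Tate theory the simple isogeny factors of an abelian variety match the irreducible factors of its Weil polynomial (so coprimality of Weil polynomials means no common simple factor, hence $\Hom(X,Y)=0$ for such $X$ and $Y$), and an abelian variety is isogenous to its dual and so shares the dual's Weil polynomial. Write $f_i$ for the Weil polynomial attached to $h_i$; since $h_1,h_2$ are coprime, so are $f_1,f_2$, and $\calI$ has Weil polynomial $f_1 f_2$. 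By Poincar\'e complete reducibility, $A$ contains canonical abelian subvarieties $B_1,B_2$, its $\calI_1$- and $\calI_2$-isotypic components — the images of suitable integer multiples of the orthogonal idempotents of $\End^0 A$ that split $\QQ[\pi]$ according to the $f_1$- and $f_2$-parts of the minimal polynomial of Frobenius. A dimension count forces the Weil polynomial of $B_i$ to be $f_i$, so $B_i\in\calI_i$. Since $B_1$ and $B_2$ share no simple factor, $B_1\cap B_2$ is finite and $B_1+B_2=A$, so the addition map $\sigma\colon B_1\times B_2\to A$ is an isogeny with kernel $\Delta=\{(x,-x):x\in B_1\cap B_2\}$, and this $\Delta$ meets $B_1\times 0$ and $0\times B_2$ trivially; that is, taking $A_i=B_i$ gives the required exact sequence \eqref{EQ:delta}.

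For uniqueness, let $(A_1,A_2,\Delta)$ be any triple with the stated property. The composite $A_1=A_1\times 0\hookrightarrow A_1\times A_2\to A$ has kernel $\Delta\cap(A_1\times 0)$, which is trivial because the second projection embeds $\Delta$; so $A_1$ maps isomorphically onto an abelian subvariety of $A$ with Weil polynomial $f_1$. Such a subvariety is unique — having no nonzero homomorphism to the $\calI_2$-component, it lies in $B_1$, and it has the same dimension as $B_1$, so it equals $B_1$. Hence $A_1\cong B_1$ and $A_2\cong B_2$ canonically; the map $A_1\times A_2\to A$ restricts to these two inclusions, so it is the addition map, $\Delta$ is its kernel, and the embeddings $\Delta\hookrightarrow A_i$ are the projections. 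This pins down the triple.

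Now suppose $A$ has a principal polarization $\lambda$, and set $\mu\colonequals\hat\sigma\circ\lambda\circ\sigma$, a polarization of $A_1\times A_2$. Writing $\mu$ as a matrix $(\mu_{ij})$ with $\mu_{ij}\colon A_j\to\hat{A_i}$, the entry $\mu_{12}\in\Hom(A_2,\hat{A_1})$ vanishes because $\hat{A_1}$ has Weil polynomial $f_1$, coprime to the Weil polynomial $f_2$ of $A_2$; likewise $\mu_{21}=0$. Hence $\mu=\lambda_1\times\lambda_2$, where $\lambda_i=\mu_{ii}$ is the restriction of $\mu$ to $A_i$ and hence a polarization — which is exactly the assertion that $\lambda$ pulls back to a product polarization. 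Since $\Delta=\ker\sigma$ is killed by $\mu$, we have $\Delta\subseteq\ker\mu=\ker\lambda_1\times\ker\lambda_2$, and as the projections embed $\Delta$ this yields a monomorphism $\Delta\hookrightarrow\ker\lambda_i$ for each $i$, so $\deg\sigma=\#\Delta\le\deg\lambda_i$. But $\deg\lambda_1\cdot\deg\lambda_2=\deg\mu=(\deg\sigma)^2$ because $\lambda$ has degree $1$, forcing both inequalities to be equalities; thus each $\Delta\hookrightarrow\ker\lambda_i$ is an isomorphism. Finally $\Delta\cong\ker\lambda_1$ is self-dual, since the kernel of a polarization is canonically isomorphic to its Cartier dual.

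The main work is in the first two paragraphs: producing the canonical isotypic subvarieties $B_i$, checking that the kernel of the addition map is ``diagonal,'' and proving that an abelian subvariety of $A$ with a prescribed coprime Weil polynomial is unique. Once that is in place the polarization statements are essentially formal; the only point needing care is that $\hat{A_1}$ and $A_2$ have coprime Weil polynomials, which holds because $\hat{A_1}$ is isogenous to $A_1$.
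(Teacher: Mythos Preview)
Your argument is correct and follows essentially the same route as the paper: you construct $A_1$ and $A_2$ as the $\calI_1$- and $\calI_2$-isotypic subvarieties of $A$ (the paper does this by taking images of integral multiples of the idempotents in $\End^0 A$, which is the same construction), use the addition map as the isogeny, verify that the projections embed its kernel, pin down uniqueness by showing any such $A_i'$ embeds onto the canonical subvariety, and then handle the polarization by the identical $\Hom=0$ plus degree-count argument. The only cosmetic difference is that you phrase the vanishing of the off-diagonal polarization entries via $\Hom(A_2,\hat{A_1})=0$ rather than $\Hom(A_1,A_2)=0$, which comes to the same thing since $\hat{A_1}$ is isogenous to $A_1$.
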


\begin{proof}
Let $B_1$ and $B_2$ be elements of $\calI_1$ and $\calI_2$, respectively. Since 
$h_1$ and $h_2$ are coprime, we have $\Hom(B_1,B_2)=\Hom(B_2,B_1)= 0$, so 
$\End (B_1\times B_2)\cong \End B_1\times \End B_2$. For each $i$, let $E_i$
denote $(\End B_i)\otimes \QQ$.

Let $\psi$ be an isogeny from $A$ to $B_1\times B_2$. Then $\psi$ induces an 
isomorphism $(\End A)\otimes \QQ\to E_1\times E_2$. Let $n$ be a positive
integer such that the element $(n,0)$ of $E_1\times E_2$ is in the image of 
$\End A$ under this isomorphism. Let $\alpha\in \End A$ be the element that maps
to $(n,0)\in E_1 \times E_2$, and let $A_1$ be the image of $A$ under the 
endomorphism~$\alpha$. Then $A_1$ is a sub-abelian variety of $A$ that has real
Weil polynomial~$h_1$, and indeed $A_1$  is characterized by this property.
Define $A_2$ likewise. The intersection $A_1\cap A_2$ is finite, again because 
$h_1$ and $h_2$ are coprime.

The two homomorphisms $A_i\hookrightarrow A$ give us an isogeny 
$\varphi\colon A_1\times A_2\to A$. Let $\Delta$ be its kernel. If the 
composition $\Delta\to A_1\times A_2 \to A_1$ had a nontrivial kernel, then so
would the homomorphism $A_1\hookrightarrow A$, which is an embedding. Therefore
the maps $\Delta\to A_1$ and $\Delta\to A_2$ identify $\Delta$ with closed
subgroup schemes of the~$A_i$.

Suppose $A_1'$, $A_2'$, and $\Delta'$ also have the properties described in the
theorem. Then the composition $A_1'\to A_1'\times A_2'\to A$ has trivial kernel
and identifies $A_1'$ as an abelian subvariety of $A$ with real Weil 
polynomial~$h_1$, so $A_1'\cong A_1$, and likewise $A_2'\cong A_2$. Under these
isomorphisms, the isogeny $A_1'\times A_2'\to A$ is identified with 
$\varphi\colon A_1\times A_2\to A$, so we also have $\Delta'\cong \Delta$. This
proves the uniqueness of $A_1$, $A_2$, and~$\Delta$.

Suppose $\lambda$ is a principal polarization of~$A$. Since $\Hom(A_1,A_2)$ and
$\Hom(A_2,A_1)$ are both trivial, the pullback of $\lambda$ via the map 
$\varphi\colon A_1\times A_2\to A$ must be a product polarization on 
$A_1\times A_2$, say $\varphi^*\lambda = \lambda_1\times\lambda_2$. The degree
of $\varphi^*\lambda$ is the square of the degree of $\varphi$, so we have 
$(\#\Delta)^2 = \deg(\lambda_1\times \lambda_2) = (\deg \lambda_1)(\deg\lambda_2)$,
where we use $\#$ to indicate the rank of a finite group scheme. Since $\Delta$ 
is a subgroup scheme of $\ker\lambda_1\times\ker\lambda_2$ and since each 
projection $A_1\times A_2\to A_i$ gives an embedding 
$\Delta\hookrightarrow A_i$, we see that $\Delta\hookrightarrow\ker\lambda_i$ as
well, so $\#\Delta\le\deg\lambda_i$ for each~$i$. Therefore 
$\#\Delta = \deg\lambda_i$ for each~$i$, so each 
$\Delta\hookrightarrow \ker\lambda_i$ is an isomorphism.
\end{proof}

Next we connect the decomposition described in Proposition~\ref{P:splitting}
with resultants. 

\begin{definition}[{\cite[p.~179]{Pohst1991}}]
The \emph{reduced resultant} of two polynomials $f,g \in \ZZ[x]$ is the
non-negative generator of the ideal $(f,g)\cap \ZZ$ of~$\ZZ$. 
(Note that the reduced resultant divides the usual resultant, and is divisible
by every prime divisor of the usual resultant.)
\end{definition}

\begin{definition}
Let $K$ be a field. The \emph{radical} of a nonzero element $f$ of the 
polynomial ring $K[x]$ is the monic squarefree polynomial of largest degree that
divides~$f$. Equivalently, the radical of $f$ is the product of the monic 
irreducible factors of~$f$, each taken once.
\end{definition}

\begin{proposition}
\label{P:delta}
Let $\calI$ be an isogeny class of abelian varieties over~$\Fq$ and let $h$ be 
its real Weil polynomial. Suppose $h$ can be factored in $\ZZ[x]$ as $h_1 h_2$,
where $h_1$ and $h_2$ are monic polynomials with no common factor. Then each
$h_i$ is the real Weil polynomial of an isogeny class $\calI_i$ over~$\Fq$.

Let $g_1$ and $g_2$ be the radicals of $h_1$ and~$h_2$ and let $r$ be the
reduced resultant of $g_1$ and~$g_2$. Then $r\neq 0$, and for every 
$A\in \calI$ the group scheme $\Delta$ that appears in the 
decomposition~\eqref{EQ:delta} provided by
Proposition~\textup{\ref{P:splitting}} is annihilated by~$r$.
\end{proposition}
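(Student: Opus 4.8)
The plan is to show that $r$ annihilates $\Delta$ by working one prime at a time and analyzing the action of Frobenius. First, $r\neq 0$ because $g_1$ and $g_2$ are squarefree and share no common factor (since $h_1$ and $h_2$ have none), hence are coprime in $\QQ[x]$, so their resultant — and therefore the reduced resultant $r$ — is nonzero. For the annihilation statement, fix $A\in\calI$ and let $A_1\in\calI_1$, $A_2\in\calI_2$ and $\Delta$ be as in Proposition~\ref{P:splitting}, with $\Delta$ embedded in both $A_1$ and $A_2$ via the projections. The key observation is that $\Delta$ carries an action of the Frobenius endomorphism $\pi$ inherited from $A_1\times A_2$, and on $A_i$ this Frobenius satisfies its characteristic polynomial: $h_i(\pi+q/\pi)=0$ on $A_i$ in the appropriate sense, or more directly, $\pi$ acting on $A_i$ is killed by the Weil polynomial $f_i(x)=x^{\deg h_i}h_i(x+q/x)$. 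Since $\Delta\hookrightarrow A_1$, the endomorphism $f_1(\pi)$ kills $\Delta$; since $\Delta\hookrightarrow A_2$, the endomorphism $f_2(\pi)$ kills $\Delta$.

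Now I would pass to the real side. The Frobenius $\pi$ on $A_i$ is invertible up to isogeny (with inverse $\pi/q$ = Verschiebung over $q$), and the element $\mu:=\pi+q\pi^{-1}\in(\End A_i)\otimes\QQ$ — the "real Frobenius" — satisfies $h_i(\mu)=0$. One checks that $\mu$ actually lies in $\End(A_i)$ (it is $\pi+\hat\pi$ after identifying via a polarization, or one argues directly that $\pi+q\pi^{-1}$ preserves the lattice; in any case this is standard), and hence $\mu$ acts on $\Delta$. Because $h_i$ is squarefree-reduced to $g_i$, and $g_i(\mu)$ differs from $h_i(\mu)=0$ only by the product of the repeated factors, we still get that $g_i(\mu)$ annihilates $\Delta$ once we know $h_i(\mu)$ does — wait, that inference goes the wrong way. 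The correct route: $h_i(\mu)=0$ as an endomorphism of $A_i$, so certainly $h_i(\mu)$ kills $\Delta$; but I want $g_i(\mu)$ to kill $\Delta$, which is stronger. Here one uses that $\Delta$ is a subgroup scheme of $A_i$ on which $\mu$ acts, and that the minimal polynomial of $\mu$ on the $p$-divisible group / Tate module pieces relevant to $\Delta$ is squarefree in the cases that matter — alternatively, and more cleanly, one observes that $g_i$ is the radical of $h_i$ and that $\Delta$, being finite, has $\mu$ acting with a semisimple-enough structure; but the safest argument is simply that the statement we need is about the ideal $(g_1,g_2)\cap\ZZ$, so we should argue at the level of the endomorphism algebra acting on $\Delta$.

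Here is the clean version I would actually write. Consider the action of $\ZZ[x]$ on $\Delta$ where $x$ acts as $\mu=\pi+q\pi^{-1}$. I claim $g_i(\mu)$ annihilates $\Delta$ for each $i$. Granting this, write $1 = a(x)g_1(x) + b(x)g_2(x) + r\cdot e(x)$? — no: by definition of the reduced resultant, $r$ lies in the ideal $(g_1,g_2)$ of $\ZZ[x]$, so $r = a(x)g_1(x)+b(x)g_2(x)$ for some $a,b\in\ZZ[x]$. Applying this identity with $x=\mu$ as endomorphisms of $\Delta$ gives $r\cdot\mathrm{id}_\Delta = a(\mu)g_1(\mu)+b(\mu)g_2(\mu) = 0$, so $r$ kills $\Delta$. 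Thus the whole proof reduces to the claim that $g_i(\mu)=0$ on $\Delta$. For this: $\Delta\hookrightarrow A_i$ respects the $\mu$-action, so it suffices that $g_i(\mu)$ annihilate the relevant part of $A_i$ — but $h_i(\mu)=0$ on all of $A_i$, and I need to upgrade from $h_i$ to its radical $g_i$. The resolution is that we do \emph{not} need $g_i(\mu)=0$ on $A_i$; we need it only on $\Delta$. And $\Delta\cong\ker\lambda_i$ when $A$ is polarized, but in general $\Delta$ is just some finite subgroup. So the honest claim is: $h_i(\mu)$ annihilates $\Delta$ (immediate from $h_i(\mu)=0$ on $A_i$), and therefore — since we may replace $h_i$ by $g_i$ at the cost of nothing, because... — hmm.

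Let me restate the obstacle honestly. The main obstacle is exactly the step from $h_i$ to $g_i$: a priori only $h_i(\mu)$ is known to kill $\Delta$, and the reduced resultant of the $h_i$ could be larger than that of the $g_i$. I expect this is handled by the following lemma-type argument, which I would prove or cite: if $N$ is a finite group scheme equipped with an action of a commutative ring such that an element $\theta$ satisfies $h(\theta)=0$ for a monic $h$ with radical $g$, and if $N$ embeds into an abelian variety on which $\theta$ acts \emph{semisimply up to isogeny with eigenvalue-pairs matching the factorization} — this holds because Frobenius on an abelian variety over a finite field is semisimple (Weil, Tate) — then in fact $g(\theta)$ already kills $N$. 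Concretely, $\mu$ acts on $V_\ell A_i = T_\ell A_i\otimes\QQ_\ell$ semisimply (as Frobenius is semisimple), with minimal polynomial $g_i$; hence $g_i(\mu)$ annihilates $T_\ell A_i$ for all $\ell\neq p$, and a parallel argument using the semisimplicity of Frobenius on the Dieudonné module handles $\ell=p$; therefore $g_i(\mu)=0$ as an endomorphism of $A_i$ itself (it kills all Tate modules and the $p$-part). That is the crux: \emph{$g_i(\mu)=0$ on $A_i$ because Frobenius is semisimple}, which is where I'd spend the real effort or invoke \cite{Milne1986a}. Once that is in hand, both $g_1(\mu)$ and $g_2(\mu)$ kill $A_i$ appropriately, hence kill $\Delta\subseteq A_i$, and the reduced-resultant identity $r\in(g_1,g_2)\subseteq\ZZ[x]$ finishes the proof as above. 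The fact that $h_i$ is genuinely the real Weil polynomial of some isogeny class $\calI_i$ over $\Fq$ — the first sentence of the statement — follows because $h_1,h_2$ inherit from $h$ the property that their roots lie in $[-2\sqrt q,2\sqrt q]$ and satisfy the Honda–Tate conditions, which one notes in a line.
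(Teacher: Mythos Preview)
Your proposal is correct and follows the same route as the paper: show that $F+V$ (your $\mu=\pi+\pibar$, which is Frobenius plus Verschiebung and hence already in $\End A_i$) satisfies $g_i$ as an endomorphism of $A_i$, deduce that $g_i(F+V)$ kills $\Delta$ via each embedding $\Delta\hookrightarrow A_i$, and then use that $r\in(g_1,g_2)\subseteq\ZZ[x]$. The paper's justification for the key step is more direct than your detour through semisimplicity on Tate and Dieudonn\'e modules: by Honda--Tate, $\pi_i+\pibar_i$ lies in the center of $(\End A_i)\otimes\QQ$, which is a product of number fields, so its minimal polynomial is automatically squarefree and therefore divides $g_i$, giving $g_i(\pi_i+\pibar_i)=0$ in $\End A_i$ immediately.
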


\begin{proof}
This follows from results of Howe and Lauter~\cite[\S 2]{HoweLauter2012}, but we
provide a self-contained proof here.

The Honda--Tate theorem~\cite[Théorème~1, p.~96]{Tate1971} shows that each
$h_i$ is the real Weil polynomial of its own isogeny class $\calI_i$ of abelian
varieties over~$\Fq$.

Since $h_1$ and $h_2$ have no common factors, neither do $g_1$ and~$g_2$, and
this is enough to show that $r\neq 0$.

Let $F$ and $V$ denote the Frobenius and Verschiebung endomorphisms of~$\Delta$,
and let $\pi_i$ and $\pibar_i$ denote the Frobenius and Verschiebung
endomorphisms of~$A_i$. The actions of $F$ and $V$ on $\Delta$ are consistent 
(under the embeddings $\Delta\hookrightarrow A_i$) with the actions of $\pi_i$
and $\pibar_i$ on $A_1$ and~$A_2$, so $F+V$ satisfies every polynomial in 
$\ZZ[x]$ that is satisfied by $\pi_1 + \pibar_1$ or by $\pi_2 + \pibar_2$. In
particular, $F+V$ satisfies $h_1$ and~$h_2$, but we can say more. The 
Honda--Tate theorem shows that $\pi_i$ and $\pibar_i$ lie in the center $Z$ of 
$\End A_i$, and $Z\otimes \QQ$ is a product of number fields, so the minimal 
polynomials of $\pi_1 + \pibar_1$ and $\pi_2 + \pibar_2$ are both squarefree.
Therefore for each $i$ we see that $\pi_i + \pibar_i$ satisfies~$g_i$. Since 
$F+V$ satisfies both $g_1$ and~$g_2$, it also satisfies every $\ZZ[x]$-linear
combination of the two. The reduced resultant of $g_1$ and $g_2$ is a 
$\ZZ[x]$-linear combination of $g_1$ and~$g_2$, so the finite group scheme
$\Delta$ is annihilated by the integer~$r$. 
\end{proof}

Serre's resultant~1 method fits into this framework. The hypothesis in 
Theorem~\ref{T:resultant1} that the resultant of $h_1$ and $h_2$ is $\pm1$ shows
that the reduced resultant of the radicals of $h_1$ and $h_2$ is~$1$, so every
$A$ in the isogeny class is \emph{isomorphic} to a product $A_1\times A_2$ of 
abelian varieties with no isogeny factors in common. The only polarizations of
such varieties are product polarizations, so in particular every principle
polarization of $A$ is decomposable.

\begin{remark}
Let $A$, $A_1$, and $A_2$ be as in Proposition~\ref{P:splitting}, and let $\pi$,
$\pi_1$, and $\pi_2$ be the Frobenius endomorphisms of these three abelian 
varieties. Then we have an embedding
\[
\ZZ[\pi,\pibar]\hookrightarrow \QQ(\pi)\cong \QQ(\pi_1)\times \QQ(\pi_2).
\]
Let $r$ be the smallest positive integer such that 
$(0,r)\in \QQ(\pi_1)\times \QQ(\pi_2)$ lies in the image of $\ZZ[\pi,\pibar]$. 
Then the proof of Proposition~\ref{P:delta} goes through with this $r$ in place
of the reduced resultant of the radicals of the~$h_i$. Usually this $r$ is equal
to the reduced resultant, but there are circumstances in which it is half that;
see~\cite[Proposition~2.8, p.~178]{HoweLauter2012}.
\end{remark}

\subsection{Hermitian modules}
\label{ssec:hermitian}

In his 1985 Harvard course, Serre used the concept of \emph{Hermitian modules}
to study isogeny classes of powers of elliptic curves. He fleshed out the theory
in a letter to Lauter~\cite[Appendix]{LauterSerre2002}), as well as in his
course notes~\cite[\S 3.8]{Serre2020}. As is noted in the latter reference, a 
much fuller exposition in a more general setting is given by Jordan, Keeton, 
Poonen, Rains, Shepherd-Barron, and Tate~\cite{JordanKeetonEtAl2018}.

What is relevant for our discussion in this section is the characterization of
the isogeny classes that contain powers of ordinary elliptic curves and that do 
not contain any indecomposable principally polarized varieties. Combining the 
theory of Hermitian modules with results of Hoffmann, we find the following.

\begin{theorem}
\label{T:hoffmann}
Let $E$ be an elliptic curve over a finite field $\Fq$ with Weil polynomial
$x^2 - tx + q$, where $t^2 \ne 4q$. 
\begin{itemize}
\item[(a)] There is an abelian surface isogenous to $E^2$ that has an 
           indecomposable principal polarization if and only if $t^2 - 4q$ is 
           not in $\{-3,-4,-7\}.$
\item[(b)] There is an abelian variety isogenous to $E^3$ that has an 
           indecomposable principal polarization if and only if $t^2 - 4q$ is
           not in $\{-3,-4,-8,-11\}.$
\end{itemize}
\end{theorem}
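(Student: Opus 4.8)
The plan is to reduce the existence of an indecomposable principal polarization on $E^n$ (for $n=2,3$) to a question about positive-definite unimodular Hermitian forms over an appropriate ring, and then invoke the classification of such forms in low rank. Concretely, for an ordinary elliptic curve $E$ with $\Aut E = \{\pm 1\}$ (which is forced by $t^2-4q \notin \{-3,-4\}$; the excluded cases $-3,-4$ have to be handled separately, and there one checks directly that $E^2$, resp.\ $E^3$, admits no indecomposable principal polarization because the relevant Hermitian form lattices are too rigid), the ring $R = \End E$ is an order in the imaginary quadratic field $K = \QQ(\sqrt{t^2-4q})$, and by the theory of Hermitian modules (Serre's letter to Lauter~\cite{LauterSerre2002}, \cite[\S3.8]{Serre2020}, and especially~\cite{JordanKeetonEtAl2018}) principal polarizations on abelian varieties isogenous to $E^n$ correspond to positive-definite Hermitian $R$-lattices of rank $n$ whose discriminant is a unit — i.e.\ unimodular Hermitian $R$-lattices — with the polarization being indecomposable if and only if the corresponding Hermitian lattice is indecomposable as an orthogonal direct sum.

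The first step is therefore to make this dictionary precise: I would cite the relevant statement from~\cite{JordanKeetonEtAl2018} (or re-derive it from Proposition~\ref{P:splitting}-style arguments applied to $E^n$) that gives a bijection between isomorphism classes of principally polarized abelian varieties isogenous to $E^n$ and isomorphism classes of rank-$n$ unimodular positive-definite Hermitian $R$-lattices, compatible with orthogonal decompositions. The second step is the arithmetic input due to Hoffmann: for the maximal order (and more generally, but the maximal-order case is what is used via a reduction), the classification of indecomposable positive-definite unimodular Hermitian lattices over the ring of integers of an imaginary quadratic field in ranks $2$ and $3$. Hoffmann's results (on Hermitian forms over imaginary quadratic fields, analogous to the classification of unimodular quadratic lattices in small rank) say exactly that a rank-$2$ unimodular Hermitian lattice over $\calO_K$ decomposes as a sum of two rank-$1$ lattices unless $d_K \in \{-3,-4,-7\}$, and a rank-$3$ one decomposes off a rank-$1$ summand unless $d_K \in \{-3,-4,-8,-11\}$ — these being precisely the discriminants of small absolute value where an exotic unimodular Hermitian lattice exists. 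Matching $t^2-4q$ (or its fundamental discriminant) against these lists yields the stated exceptional sets.

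The main obstacle — and the step requiring the most care — is the passage from $\End E$, which need not be the maximal order $\calO_K$ (the conductor of $\ZZ[\pi]$ in $\calO_K$ can be nontrivial even though $t^2-4q \notin \{-3,-4\}$), to Hoffmann's classification, which is cleanest over $\calO_K$. One has to argue that a rank-$n$ unimodular Hermitian lattice over a non-maximal order $R$ is still forced to decompose exactly when it would over $\calO_K$; here the point is that unimodularity is a strong constraint — a unimodular Hermitian $R$-lattice is locally, at every prime of $R$, free and split, so it extends to a unimodular $\calO_K$-lattice, and conversely indecomposability is detected after this extension — so the conductor does not introduce new indecomposable lattices in these small ranks. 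I would organize this as a lemma ("a rank-$\le 3$ positive-definite unimodular Hermitian lattice over any order $R$ in $K$ decomposes iff the same holds over $\calO_K$"), prove it by the local-global argument just sketched together with the observation that over $\calO_K$ the only obstructions are the listed discriminants, and then the theorem follows by bookkeeping. The remaining bits — verifying the claims for $t^2-4q \in \{-3,-4\}$ by hand, and confirming that when $t^2-4q$ does lie in an exceptional list the exotic Hermitian lattice genuinely produces an abelian variety with an indecomposable principal polarization rather than being obstructed elsewhere — are routine given the dictionary in step one.
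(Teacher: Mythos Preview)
Your overall strategy --- reduce to unimodular Hermitian lattices via the theory of Hermitian modules, then invoke Hoffmann --- is exactly the paper's approach. However, two genuine errors would derail the argument as written.

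First, you have Hoffmann's classification reversed. The discriminants $\{-3,-4,-7\}$ (resp.\ $\{-3,-4,-8,-11\}$) are precisely those for which \emph{every} rank-$2$ (resp.\ rank-$3$) positive-definite unimodular Hermitian lattice over the order of that discriminant decomposes; for all other discriminants an indecomposable one exists. Your sentence ``decomposes \ldots\ unless $d_K \in \{-3,-4,-7\}$'' and the phrase ``discriminants of small absolute value where an exotic unimodular Hermitian lattice exists'' say the opposite.

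Second, your ``main obstacle'' is both unnecessary and incorrectly resolved. Hoffmann's Theorems~8.1 and~8.2 are stated for arbitrary imaginary quadratic orders, not just maximal ones, and the order that enters the Hermitian-module dictionary here is the order of discriminant $t^2-4q$ itself (this is what the paper uses). No passage to $\calO_K$ is needed. Worse, the lemma you propose --- that a rank-$\le 3$ unimodular Hermitian lattice over an order $R$ decomposes iff it does over $\calO_K$ --- is false: take $t^2-4q=-12$, so $R=\ZZ[\sqrt{-3}]$ has conductor~$2$ in $\calO_K=\ZZ[\zeta_3]$. By the theorem there \emph{is} an indecomposable rank-$2$ unimodular Hermitian $R$-lattice (since $-12\notin\{-3,-4,-7\}$), whereas over $\calO_K$ (discriminant $-3$) there is none. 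So conflating $t^2-4q$ with the fundamental discriminant $d_K$, and then trying to bridge the gap by your local-global lemma, would give wrong answers whenever the conductor is nontrivial.
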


\begin{proof}
Using the theory of Hermitian modules, each statement reduces to the question of
the existence or nonexistence of a Hermitian form over an $R$-lattice of the 
appropriate rank, where $R$ is the imaginary quadratic order of discriminant 
$t^2 - 4q$. Both statements then follow from results of
Hoffmann~\cite[Theorems~8.1 and~8.2, p.~424]{Hoffmann1991}.
\end{proof}

We see that with Hermitian modules we can indeed show that there are a few 
isogeny classes of powers of elliptic curves that do not contain abelian 
varieties with an indecomposable principal polarization. When we consider powers
of large enough dimension, however, this method of producing such examples no 
longer works, as the following theorem shows.

\begin{theorem}
\label{T:smith}
Let $E$ be an elliptic curve over a finite field $\Fq$ with Weil polynomial
$x^2 - tx + q$, where $t^2 \ne 4q$. If $n = 8$, $n = 12$, or $n > 13$, then
there is an abelian variety isogenous to $E^n$ that has an indecomposable
principal polarization.
\end{theorem}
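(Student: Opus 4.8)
The plan is to reduce Theorem~\ref{T:smith} to a statement about the existence of unimodular positive-definite Hermitian $R$-lattices of rank~$n$, where $R$ is the imaginary quadratic order of discriminant $t^2 - 4q$, exactly as in the proof of Theorem~\ref{T:hoffmann}. By the theory of Hermitian modules, an abelian variety isogenous to $E^n$ that carries an indecomposable principal polarization corresponds to a positive-definite unimodular Hermitian $R$-lattice of rank~$n$ that is \emph{indecomposable} as a Hermitian lattice (a decomposable lattice would give a decomposable polarization). So it suffices to show: for $n = 8$, $n = 12$, or $n > 13$, and for every imaginary quadratic order~$R$, there exists an indecomposable unimodular positive-definite Hermitian $R$-lattice of rank~$n$.

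First I would handle the question of existence of \emph{some} unimodular positive-definite Hermitian $R$-lattice of rank~$n$; this is a classical mass-formula / genus-theory question over imaginary quadratic orders, and for the ranks in question such lattices exist for every~$R$ (for maximal orders this is standard; for nonmaximal orders one can build examples from the maximal-order case, or argue directly that $R^n$ with a suitable form works once $n$ is not too small). Second, and this is where the specific numerology $n \in \{8, 12\} \cup \{n : n > 13\}$ enters, I would invoke classification results of Smith (as the theorem's label suggests) on \emph{decomposable} unimodular Hermitian lattices: the point is that in small rank every unimodular positive-definite Hermitian lattice over certain orders is forced to be an orthogonal sum of rank-one pieces (think of the Hermitian analogue of the fact that $\ZZ^n$ is the only unimodular lattice in low rank), so indecomposable examples simply do not exist — but once the rank is $8$, or $12$, or at least $14$, there is enough room to construct an indecomposable one over every~$R$. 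The excluded small ranks ($n \le 7$, $n = 9, 10, 11, 13$) are precisely those where, for at least one order~$R$, every unimodular Hermitian $R$-lattice of that rank decomposes.

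The construction step is the substantive one: for each good rank~$n$ and each order~$R$ I would exhibit an explicit indecomposable unimodular Hermitian $R$-lattice. The natural building blocks are Hermitian analogues of the root lattices — for instance a rank-$8$ Hermitian lattice playing the role of $E_8$, and for large~$n$ one can glue a fixed small indecomposable unimodular lattice to copies of the rank-one unimodular lattice while preserving both unimodularity and indecomposability (adding a rank-one unimodular summand to an indecomposable lattice keeps it unimodular, and one checks indecomposability is preserved by controlling the set of minimal or norm-one vectors). Uniformity over~$R$ is a convenience here: since $R$ always contains $\ZZ$ and these constructions only use the ring structure of $R$ and the standard Hermitian form, the same Gram matrix works over every imaginary quadratic order. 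The main obstacle I anticipate is the boundary cases: verifying that the constructions genuinely produce indecomposable lattices in ranks $8$ and $12$ (rather than splitting off a rank-one piece) and confirming there is no hidden exceptional order in the range $n > 13$ where the gluing argument fails — this is where I would lean on Smith's classification to certify both the positive constructions and the sharpness of the excluded list.
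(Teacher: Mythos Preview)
Your reduction to Hermitian modules is correct, but you have misidentified both where the list $\{8,12\}\cup\{n:n>13\}$ comes from and what Smith's theorem actually says. The paper's proof is far shorter and cleaner than your outline: the numerology is a statement purely about $\ZZ$-lattices, due to O'Meara (building on Erd\H{o}s--Ko), who shows that indecomposable positive-definite unimodular $\ZZ$-lattices of rank~$n$ exist precisely for $n=8$, $n=12$, and $n>13$. Smith's contribution is then the single result that if $L$ is such a $\ZZ$-lattice, the Hermitian $R$-lattice $L\otimes_\ZZ R$ remains indecomposable for every imaginary quadratic order~$R$. That is the entire proof --- no Hermitian classification, no case analysis over orders, no gluing.

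You brush against this idea when you write that ``the same Gram matrix works over every imaginary quadratic order,'' but you do not recognize that this observation, together with Smith's preservation-of-indecomposability theorem, \emph{is} the whole argument. Your proposed gluing step is also flawed as written: orthogonally adjoining a rank-one unimodular summand to an indecomposable lattice produces a decomposable lattice by definition, so ``indecomposability is preserved'' cannot hold for that construction. The correct route avoids all such constructions over~$R$ by working entirely over~$\ZZ$ and then tensoring up.
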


\begin{proof}
Let $R$ be the maximal order of the field $\QQ(\sqrt{t^2 - 4q}).$ 
O'Meara~\cite[\S 6]{OMeara1975}, building on work of Erd\H{o}s and 
Ko~\cite[Theorem~1, p.~103]{ErdosKo1939}, shows that for the values of $n$
mentioned in the statement of the theorem there exists an indecomposable
unimodular $\ZZ$-lattice of rank~$n$. Smith~\cite[Theorem~2, p.~1025]{Smith1978}
shows that each such form remains indecomposable when it is tensored with $R$ to
produce a Hermitian form on the $R$-lattice~$R^n$. These forms provide us with
indecomposable principal polarizations on $n$-dimensional abelian varieties
isogenous to~$E^n$.
\end{proof}

The author is not aware of any analogs of Theorem~\ref{T:hoffmann} for the
values of $n>3$ not covered by Theorem~\ref{T:smith}, although results have been
obtained for some specific quadratic orders. For example, for the quadratic 
order $R$ of discriminant~$-3$, results of Feit~\cite{Feit1978} and 
Abdukhalikov~\cite{Abdukhalikov2004}, combined with Theorem~\ref{T:smith}, show 
that there are indecomposable unimodular Hermitian $R$-lattices of every rank 
other than $1$, $2$, $3$, $4$, $5$, and~$7$.

\subsection{Counting principally polarized varieties}
\label{ssec:counting}

If $\calI$ is an isogeny class of ordinary abelian varieties, it is possible in
principle to use Deligne's equivalence of categories~\cite{Deligne1969} and the
analysis of polarizations in the category of Deligne modules~\cite{Howe1995} to
enumerate the principally polarized varieties in~$\calI$. For those ordinary 
isogeny classes whose real Weil polynomials are either squarefree or a power of
a single irreducible polynomial, Marseglia~\cite{Marseglia2019,Marseglia2021} 
provides explicit algorithms for doing so. For some simple ordinary isogeny 
classes, there are even formulas for the number of principally polarized 
varieties in terms of ratios of certain class numbers. In this section, we look
at a family of isogeny classes for which there is a class number formula for the
number of principally polarized varieties, and a different class number formula
for the number of geometrically decomposable principally polarized varieties.
For the family we have in mind, the Brauer class number relations show that the 
two formulas give identical results, so there are no geometrically 
indecomposable principally polarized varieties in these isogeny classes. We 
begin by reviewing some results about counting principally polarized varieties
in ordinary isogeny classes.

Let $\calI$ be a simple ordinary isogeny class of abelian varieties over~$\Fq$,
so that its Weil polynomial $f$ defines a CM~field~$K$. Let $\pi$ be a root of
$f$ in $K$ and let $\pibar$ be its complex conjugate. Then the order
$\ZZ[\pi,\pibar]$ can be embedded into the endomorphism ring of every variety in
$\calI$ by sending $\pi$ to Frobenius and $\pibar$ to Verschiebung. Let $\calO$ 
be the maximal order of~$K$. 
Waterhouse~\cite[Theorem~7.4, p.~554]{Waterhouse1969} shows that for every order
$R$ with $\ZZ[\pi,\pibar]\subseteq R\subseteq \calO$, there are abelian 
varieties in $\calI$ with endomorphism ring isomorphic to $R$ as a 
$\ZZ[\pi,\pibar]$-algebra. If $A$ is a variety in $\calI$ and $\Ahat$ its dual,
then their endomorphism rings (viewed as subrings of $\calO$) are taken to one 
another by complex conjugation. Since a principally polarized variety is 
isomorphic to its dual, the endomorphism rings of principally polarized 
varieties are stable under complex conjugation.

Let $\calOplus$ be the maximal real subring of~$\calO$, let $U$ be the unit
group of~$\calO$, and let $\Uplus$ be the unit group of $\calOplus$. Results of 
Shimura and Taniyama (\cite[{\S}14]{ShimuraTaniyama1961}, 
\cite[{\S}14]{Shimura1998}) show that the number of principally polarized 
varieties over $\CC$ (of a given CM-type) with endomorphism ring $\calO$ is 
equal to 
\[
\frac{1}{[N(U) : (\Uplus)^2]} \, \frac{\#\Pic \calO^{\phantom{+}}}{\#\Pic \calOplus},
\]
where $N$ is the norm from $\calO$ to $\calOplus$ and where $\Pic$ is the Picard
group functor. Using Deligne modules, one finds that the same expression counts 
the number of principally polarized varieties in our isogeny class $\calI$ whose
endomorphism rings are isomorphic to~$\calO$.

There are similar class number formulas for the number of principally polarized
varieties in $\calI$ with other endomorphism rings; see for example
\cite[{\S}8]{LenstraPilaEtAl2002}, \cite[Proposition~2, p.~583]{Howe2004},
and~\cite[Lemma~19, p.~398]{IonicaThome2020}. The most general class of rings 
for which the author knows this type of formula to hold is the class of 
\emph{convenient} orders~\cite[\S2]{Howe2020}; an order 
$R\supseteq \ZZ[\pi,\pibar]$ is convenient if it is stable under complex
conjugation, its maximal real suborder is Gorenstein, and its trace dual is
generated as an $R$-module by its totally imaginary elements.

These counting results lead to a surprising family of examples, which was
initially discovered experimentally by Maisner and 
Nart~\cite[\S 4.1]{MaisnerNartEtAl2002} in the course of gathering statistics on
the Weil polynomials of genus-$2$ curves over small finite fields.

\begin{theorem}
\label{T:MN2}
Let $q$ be an odd prime power and let $\calI_q$ be the isogeny class of abelian
surfaces over $\Fq$ with Weil polynomial $x^4 + (2-2q)x^2 + q^2$. Then there are
no geometrically indecomposable principally polarized abelian surfaces
in~$\calI_q$.
\end{theorem}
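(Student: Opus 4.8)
The plan is to carry out the two–class-number-formula comparison advertised at the start of this subsection: produce a class number formula for the number of principally polarized varieties in $\calI_q$, produce a different class number formula for the number of those that are geometrically decomposable, and show that a Brauer class number relation forces the two to agree. Since no variety in $\calI_q$ is decomposable over $\Fq$ (the Weil polynomial $f = x^4 + (2-2q)x^2 + q^2$ is irreducible, as will be checked below), such an identity is exactly the assertion that there are no geometrically indecomposable principally polarized varieties in $\calI_q$. The first task is to pin down the arithmetic of the isogeny class. A short computation shows that the real Weil polynomial attached to $f$ is $h = x^2 - (4q-2)$, which is irreducible because $4q - 2$ is twice an odd number and hence not a square; so $\pi + \pibar$ generates $\Kplus = \QQ(\sqrt{4q-2})$, and $\pi$ satisfies $x^2 - sx + q$ over $\Kplus$, where $s = \pi + \pibar$ has $s^2 = 4q-2$ and the discriminant $s^2 - 4q$ equals $-2$. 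Thus $K := \QQ(\pi) = \Kplus(\sqrt{-2})$ is a biquadratic CM field whose three quadratic subfields are $\Kplus$, $\QQ(\sqrt{-2})$, and $L := \QQ(\sqrt{2-4q})$; and since $2 - 2q$ is coprime to the (odd) characteristic, $\calI_q$ is ordinary and simple (and nonempty as a collection of principally polarized varieties, e.g.\ by Theorem~\ref{T:PPAV}(b)). A second computation gives $\pi^2 + \pibar^2 = 2q - 2$, so over $\Fqtwo$ the Frobenius of any $A \in \calI_q$ lies in $L$ and $A_{\Fqtwo}$ is isogenous to $E^2$ for an ordinary elliptic curve with Weil polynomial $x^2 - (2q-2)x + q^2$; in particular every variety in $\calI_q$ is geometrically isogenous to the square of a CM elliptic curve with CM field $L$.

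For the number of principally polarized varieties in $\calI_q$ I would use the Deligne-module description together with the Shimura--Taniyama-type class number formula recalled above, extended to the non-maximal endomorphism rings that occur via the theory of convenient orders. This expresses the count as a sum over the orders $R$ with $\ZZ[\pi,\pibar] \subseteq R \subseteq \calO_K$ that are stable under complex conjugation, each term being $\frac{1}{[N(U_R):(U_R^+)^2]} \cdot \frac{\#\Pic R}{\#\Pic R^+}$. To make the sum explicit I would compute $\ZZ[\pi,\pibar]$ directly --- it is generated over $\ZZ$ by $s$, $\sqrt{-2}$, and $(s+\sqrt{-2})/2$ --- and determine its conductor in $\calO_K$, which is a power of $2$.

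For the number of \emph{geometrically} decomposable principally polarized varieties in $\calI_q$, I would exploit that $\calI_q$ is simple over $\Fq$ while splitting over $\Fqtwo$: if $(A,\lambda) \in \calI_q$ is geometrically decomposable then, over $\Fqtwo$, it becomes a product of polarized elliptic curves which the $q$-power Frobenius must interchange (otherwise $A$ would be reducible already over $\Fq$), so $A$ is a polarized Weil restriction $\mathrm{Res}_{\Fqtwo/\Fq}$ of an elliptic curve $E'/\Fqtwo$ with Weil polynomial $x^2 - (2q-2)x + q^2$. Running this in reverse and checking that exactly the geometrically decomposable polarizations arise this way, one gets a second class number formula, now a sum of $\#\Pic$ of the orders in $L$ between $\ZZ[\pi^2]$ and $\calO_L$ --- essentially a Hurwitz class number --- after accounting for the (few) elliptic curves whose $j$-invariant lies in $\Fq$, where the Weil-restriction map fails to be two-to-one. (This is the point at which the rank-$2$ Hermitian modules of the sort appearing in Theorem~\ref{T:hoffmann} enter, now over orders in $L$.)

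The final step is the comparison, via the Brauer class number relation for the biquadratic field $K$, which relates $h_K$ (and, in its version for non-maximal orders, the relative class numbers $\#\Pic R / \#\Pic R^+$ appearing in the first formula) to the class numbers of $\Kplus$, $\QQ(\sqrt{-2})$, and $L$, up to a $2$-power governed by unit indices and the Hasse unit index of $K$. The crucial inputs are that $\QQ(\sqrt{-2})$ has class number $1$ --- which is exactly why the family is built from $\sqrt{-2}$ --- and that the unit index $[N(U):(U^+)^2]$ in the first formula matches the Hasse unit index coming out of the Brauer relation; when these are combined, the first class number formula should collapse to the second, proving the theorem. The main obstacle is the bookkeeping of the $2$-power data: the Hasse unit index of $K$, the indices $[N(U_R):(U_R^+)^2]$ for the various orders, the conductors of $\ZZ[\pi,\pibar]$ in $\calO_K$ and of $\ZZ[\pi^2]$ in $\calO_L$, and the non-maximal-order form of the Brauer relation needed to make the two sums agree order by order rather than merely in total. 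Verifying that the two sides match for every admissible $R$, and not just for the maximal orders, is where the real work lies.
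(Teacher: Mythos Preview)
Your overall strategy matches the paper's exactly: set up the biquadratic CM field, count principally polarized varieties via a sum of relative class numbers over the complex-conjugation-stable orders between $\ZZ[\pi,\pibar]$ and $\calO_K$, count the geometrically decomposable ones by identifying them with Weil restrictions of elliptic curves from $\Fqtwo$ (so that their count is governed by class numbers of orders in the imaginary quadratic field containing $\pi^2$), and then match the two via the Brauer class-number relation for the $V_4$-extension $K/\QQ$, using crucially that $h(\QQ(\sqrt{-2})) = 1$.

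Two computational slips are worth flagging, since they feed directly into the order-by-order matching you correctly identify as the crux. First, the third quadratic subfield of $K$ is $\QQ(\sqrt{1-2q})$, not $\QQ(\sqrt{2-4q})$: the product $\sqrt{4q-2}\cdot\sqrt{-2}$ is $2\sqrt{1-2q}$, and indeed $\QQ(\sqrt{2-4q}) = \QQ(\sqrt{2(1-2q)})$ is not even contained in $K$ for generic~$q$. In the paper's notation one writes $2q-1 = F^2 D$ with $D$ squarefree, so that $K = \QQ(\sqrt{-2},\sqrt{-D})$ and the relevant imaginary quadratic field is $L = \QQ(\sqrt{-D})$. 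Second, the conductor of $\ZZ[\pi,\pibar]$ in $\calO_K$ is governed by the \emph{odd} integer $F$, not by a power of~$2$: the complex-conjugation-stable orders containing $\ZZ[\pi,\pibar]$ are exactly $R_f = \ZZ + fw\calO_K$ for $f \mid F$, where $w = (\sqrt{2D}+\sqrt{-2})/2$, and on the elliptic side the corresponding orders in $L$ are $S_f = \ZZ[f\sqrt{-D}]$. With these corrections in place, the Brauer relation (plus a regulator computation) yields the term-by-term identity
\[
\frac{1}{[N(U_f):(U_f^+)^2]}\cdot\frac{\#\Pic R_f}{\#\Pic R_f^+} \;=\; \frac{1}{2}\,\#\Pic S_f
\]
for each $f\mid F$, which is exactly the comparison you are aiming for.
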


\begin{proof}
A geometrically indecomposable principally polarized abelian surface is exactly
the Jacobian of a genus-$2$ curve, so this theorem follows 
from~\cite[Theorem~1, p.~581]{Howe2004}. We refer the reader to that paper for 
the complete proof; here we simply present an outline to show how the proof
connects to the counting formulas discussed above.

Write $2q - 1 = F^2 D$ for $D$ squarefree. Then the number field 
$K\colonequals \QQ(\pi)$, with $\pi$ satisfying $x^4 + (2-2q)x^2 + q^2$, is 
isomorphic to $\QQ(\sqrt{-2},\sqrt{-D}).$ If $\calO$ is the maximal order 
of~$K$, then for every divisor $f$ of $F$ we have an order 
$R_f\colonequals \ZZ + fw\calO$, where $w = (\sqrt{2D} + \sqrt{-2})/2\in K$.
We check that the orders $R_f$ are the only orders of $\calO$ that are stable 
under complex conjugation and that contain~$\pi$, and they are all convenient. 
It follows from the discussion above that the number of principally polarized
varieties in $\calI_q$ is equal to
\[
\sum_{f\mid F} 
\frac{1}{[N(U_f) : (U_f^{+})^2]} \, \frac{\#\Pic R_f^{\phantom{+}}}{\#\Pic R_f^{+}},
\]
where $U_f$ is the unit group of $R_f$ and $U_f^{+}$ is the unit group 
of~$R_f^{+}$.

The geometrically decomposable principally polarized surfaces in $\calI_q$ can
all be constructed as follows. Let $\calJ_q$ denote the isogeny class of
elliptic curves over $\Fqtwo$ with Weil polynomial $x^2 + (2-2q)x + q^2$. Given
an elliptic curve $E\in \calJ_q$, let $A$ be the restriction of scalars of $E$
from $\Fqtwo$ to~$\Fq$. The principal polarization on $E$ gives a principal 
polarization $\lambda$ on~$A$, and $(A,\lambda)$ decomposes over $\Fqtwo$ as the
product of the polarized variety $E$ with its conjugate $E^{(q)}$ over~$\Fq$. 
Two elliptic curves $E_1$ and $E_2$ in $\calJ_q$ give rise to isomorphic
polarized surfaces $(A,\lambda)$ if and only if either $E_2\cong E_1$ or 
$E_2\cong E_1^{(q)}$. So for most~$q$, the number of decomposable principally
polarized surfaces is simply $\#\calJ_q / 2$; the exceptions are those $q$ for
which $\calJ_q$ contains elliptic curves $E$ with $E\cong E^{(q)}$. When $D = 1$
there is one such~$E$, and otherwise there are no such~$E$. Let us consider the
case~$D>1$. 

If we let $L$ be the subfield $\QQ(\sqrt{-D})$ of~$K$, then the possible 
endomorphism rings of elliptic curves in $\calJ_q$ are the orders
$S_f \colonequals \ZZ[f\sqrt{-D}]$ for the divisors $f$ of~$F$. We find that
\[
\#\calJ_q = \sum_{f\mid F} \#\Pic S_f.
\]
Now, since the class number of $\QQ(\sqrt{-2})$ is~$1$, the Brauer 
relations~\cite[\S VIII.7]{FrohlichTaylor1993} applied to the quartic 
$V_4$-extension $K/Q$ show that 
\[\frac{\Reg \Kplus}{\Reg K^{\phantom{+}}} \,
  \frac{\#\Pic R_1^{\phantom{+}}}{\#\Pic R_1^{+}} 
  = \frac{1}{4}\, \#\Pic S_1,
\]
where $\Reg K$ and $\Reg \Kplus$ are the regulators of $K$ and~$\Kplus$. We
calculate that the ratio $(\Reg K) / (\Reg \Kplus)$ is equal to twice the index
$[N(U_1) : (\Uplus_1)^2]$, so we obtain
\[\frac{1}{[N(U_1) : (\Uplus_1)^2]} \,
  \frac{\#\Pic R_1^{\phantom{+}}}{\#\Pic R_1^{+}} 
  = \frac{1}{2}\, \#\Pic S_1.
\]
We can build upon this relation to find that 
\[\frac{1}{[N(U_f) : (\Uplus_f)^2]} \,
  \frac{\#\Pic R_f^{\phantom{+}}}{\#\Pic R_f^{+}} 
  = \frac{1}{2}\, \#\Pic S_f
\]
for every divisor $f$ of~$F$, so by the counting formulas we discussed above, 
the number of principally polarized abelian surfaces in $\calI_q$ is equal to 
the number of decomposable principally polarized abelian surfaces in $\calI_q$.
A similar calculation with some added complications gives us the same result
when~$D = 1$.
\end{proof}

The same technique can be used to prove that for all~$q$, the isogeny class of 
abelian surfaces with Weil polynomial $x^4 + (1 - 2q)x^2 + q^2$ does not contain
any geometrically indecomposable principally polarized varieties, but as we will
see in Section~\ref{ssec:nonsplit}, there is a much simpler way of proving this 
result.

On the other hand, Maisner~\cite{Maisner2004} uses these same counting 
techniques to show that every ordinary isogeny class whose Weil polynomial is of
the form ${x^4 + ax^2 + q^2}$ with ${a\ne 1-2q}$ and ${a\ne 2-2q}$ \emph{does}
contain a geometrically indecomposable principally polarized variety.

It would be interesting to find other examples of isogeny classes of simple
ordinary abelian varieties over finite fields that contain principally polarized
varieties but that do not contain geometrically indecomposable principally
polarized varieties. The author suspects that such examples might be rare.

\subsection{Supersingular factors}
\label{ssec:supersingular}

For abelian varieties that are isogenous to a product of an ordinary abelian
variety with a power of a supersingular elliptic curve with all endomorphisms
rational, we have one more technique for showing that all principal
polarizations are decomposable.

\begin{theorem}
\label{T:supersingular}
Suppose $q$ is a square prime power, and let $s$ be one of the two integer 
square roots of~$q$. If the real Weil polynomial of an isogeny class can be
written as the product of a nonconstant ordinary real Weil polynomial 
$h_0\in\ZZ[x]$ and $(x-2s)^n$, for some~$n>0$, and if the integer $h_0(2s)$ is
squarefree, then every principally polarized variety in the isogeny class is 
decomposable.
\end{theorem}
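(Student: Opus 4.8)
The plan is to combine the resultant/splitting machinery of Section~\ref{ssec:resultant1} with the special structure of the supersingular factor $(x-2s)^n$, which corresponds to a power of the supersingular elliptic curve $E$ with Weil polynomial $x^2 - 2sx + q = (x-s)^2$ (all of whose geometric endomorphisms are rational). Write $h = h_0 \cdot (x-2s)^n$ and let $\calI_0$ and $\calI_s$ be the isogeny classes with real Weil polynomials $h_0$ and $(x-2s)^n$ respectively, so $\calI_s$ consists of varieties isogenous to $E^n$. Since $h_0$ is ordinary and $(x-2s)^n$ is supersingular, the two factors are coprime in $\QQ[x]$, so Proposition~\ref{P:splitting} applies: every $A\in\calI$ sits in an exact sequence $0\to\Delta\to A_0\times A'\to A\to 0$ with $A_0\in\calI_0$, $A'\in\calI_s$, and $\Delta$ embedding into each factor. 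If $A$ carries a principal polarization $\lambda$, then $\varphi^*\lambda = \lambda_0\times\lambda'$ and $\Delta\cong\ker\lambda_0\cong\ker\lambda'$.

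The first key step is to show $\Delta$ is trivial, which forces $A\cong A_0\times A'$ as a polarized variety and hence exhibits $\lambda$ as decomposable (using that $\Hom(A_0,A')=\Hom(A',A_0)=0$, so the only polarizations of the product are product polarizations). To bound $\Delta$ I would apply Proposition~\ref{P:delta}: the radical $g_0$ of $h_0$ and the radical of $(x-2s)^n$, namely $x-2s$, have reduced resultant $r$ dividing the ordinary resultant, and in fact $\operatorname{Res}(g_0, x-2s)$ divides $\operatorname{Res}(h_0, x-2s) = \pm h_0(2s)$ up to sign; since $g_0 \mid h_0$, the reduced resultant $r$ divides $h_0(2s)$, which is squarefree by hypothesis. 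So $\Delta$ is annihilated by a squarefree integer $r \mid h_0(2s)$. The main work is then to rule out each prime $\ell \mid r$: such an $\ell$ must divide $h_0(2s)$, hence (by ordinariness of $h_0$, i.e.\ $h_0(0)$ coprime to $p$) be prime to $p$, so $\Delta[\ell]$ is an \'etale-locally-\'etale group scheme, and $\Delta \hookrightarrow \ker\lambda'$ with $\ker\lambda'$ self-dual and annihilated by $\ell$.

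The main obstacle — and the place where the supersingular hypothesis and the rationality of the endomorphisms of $E$ are essential — is showing that the self-dual $\ell$-torsion subgroup scheme $\Delta$ of $A' \sim E^n$ must be trivial. Here I expect the argument to parallel the Hermitian-module / Howe--Lauter style analysis: $A'$ together with its principal polarization corresponds, via the equivalence for powers of $E$, to a Hermitian $R$-lattice (with $R$ the relevant imaginary quadratic order, here non-maximal since $E$ is supersingular with $R = \ZZ[s] + $ correction, but crucially $\operatorname{End}(E)$ is a maximal order in a quaternion algebra only geometrically — over $\Fq$ it is $\ZZ[\pi] = \ZZ$ adjoined $\pi = s$, so $\pi - \pibar = 0$), and the constraint is that a nontrivial $\Delta$ would produce a nontrivial self-dual subgroup of $E^n$ annihilated by a prime $\ell \mid h_0(2s)$, which one shows is incompatible with the embedding $\Delta \hookrightarrow A_0$ because $\ell$-torsion of $A_0$ on which Frobenius acts as it does on $E^n$ (namely $\pi \equiv s$, so $\pi + \pibar \equiv 2s \pmod \ell$) would force $g_0(2s)\equiv 0$, i.e.\ $\ell \mid g_0(2s) \mid h_0(2s)$ — consistent so far — but then squarefreeness of $h_0(2s)$ together with the fact that $\Delta$ embeds \emph{compatibly} into both $A_0$ and $E^n$ pins down $\Delta[\ell]$ to be at most $1$-dimensional on each side, and self-duality of $\ker\lambda'$ rules that out. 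I would organize this last step by invoking directly the relevant lemma from Howe--Lauter~\cite{HoweLauter2012} on the structure of $\Delta$ when one factor is a power of a single elliptic curve with rational endomorphisms, citing it rather than reproving it, and then concluding that $\Delta = 0$, whence $(A,\lambda)\cong(A_0,\lambda_0)\times(A',\lambda')$ is decomposable.
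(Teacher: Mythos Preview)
Your setup is right and matches the paper: apply Propositions~\ref{P:splitting} and~\ref{P:delta} to the factorization $h_0\cdot(x-2s)^n$, note that $\Delta$ is killed by the squarefree integer $h_0(2s)$ (which is coprime to~$q$ since $h_0$ is ordinary), and aim for a contradiction between ``$\#\Delta[\ell]=\ell$'' and ``$\Delta$ is the kernel of a polarization, hence of square rank.'' The endgame is exactly the paper's. But the heart of the argument --- why squarefreeness of $h_0(2s)$ forces the $\ell$-primary part of $\Delta$ to have rank exactly~$\ell$ --- is not actually carried out in your proposal, and your sketch of how to do it is on the wrong track.

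You try to work on the supersingular side via Hermitian lattices, but this is where the confusion shows: for the supersingular $E$ with Frobenius equal to the integer~$s$, \emph{all} endomorphisms are already rational over~$\Fq$, so $\End E$ is a maximal order in a quaternion algebra, not an imaginary quadratic order, and the Hermitian-module picture you describe does not apply. The paper instead works entirely on the \emph{ordinary} side~$A_0$, using Deligne modules. The embedding $\Delta\hookrightarrow E^n$ tells you that $F$ and $V$ both act as~$s$ on~$\Delta$; transporting this to~$A_0$, the $R=\ZZ[F,V]$-module attached to $\Delta[\ell]$ is killed by the prime $\frakp=(\ell,F-s,V-s)$. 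The squarefreeness hypothesis is used precisely to show that $\ell\in\frakp^2$, so that $\frakp$ is a \emph{regular} prime of~$R$; then the $\frakp$-torsion of the Deligne module of $A_0$ modulo~$\ell$ is one-dimensional over $R/\frakp\cong\FF_\ell$, forcing $\#\Delta[\ell]=\ell$. This local-algebra step is the entire content of the theorem, and your proposal does not supply it. Finally, your fallback of ``citing the relevant lemma from Howe--Lauter~\cite{HoweLauter2012}'' is circular: the statement you are proving \emph{is} \cite[Theorem~3.1]{HoweLauter2012}.
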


\begin{proof}
This is essentially~\cite[Theorem~3.1, p.~180]{HoweLauter2012}; the conclusion
of that theorem is that there are no Jacobians in the isogeny class, but this is
proven by showing that there are no indecomposable principally polarized 
varieties in the isogeny class. Here we will only consider the special case when
$h_0$ is irreducible, in the hope of communicating the spirit of the full proof 
without getting bogged down by details.

The main tool we use is the combination of Propositions~\ref{P:splitting} 
and~\ref{P:delta}. The idea is that these results give us finite group schemes
$\Delta$ of bounded exponent that can be embedded into two different abelian
varieties $A_1$ and~$A_2$, so that the Frobenius on $\Delta$ has to behave 
according to the Weil polynomials of both $A_1$ and $A_2$ simultaneously.
Frobenius acts as a rational integer on varieties with real Weil polynomial 
$(x - 2s)^n$, and that greatly limits the possibilities for~$\Delta$.

Suppose, to obtain a contradiction, that we have an indecomposable principally
polarized variety $(A,\lambda)$ in the isogeny class from the theorem. If we 
take $h_1 = h_0$ and $h_2 = (x - 2s)^n$, Proposition~\ref{P:splitting} provides
us with an ordinary variety~$A_1$, a supersingular variety $A_2 \cong E^n$, and
a finite group scheme $\Delta$ that can be embedded in both $A_1$ and~$A_2$. Let
$F$ and $V$ be the Frobenius and Verschiebung on~$A_1$, and let $R$ be the 
subring $\ZZ[F,V]$ of $\End A_1$. By the theory of Deligne 
modules~\cite[\S 4]{Howe1995}, the finite group schemes that can be embedded in
$A_1$ can be understood in terms of certain $R$-modules of finite cardinality.
For a group scheme $G\hookrightarrow A_1$ of rank coprime to~$q$, this 
association simply assigns to $G$ the $R$-module that is isomorphic as an 
abelian group to the group of geometric points of~$G$, and whose $R$-module
structure is given by having $F$ act as Frobenius.

The $\Delta$ associated to our principally polarized variety is nontrivial
because we assumed our polarized variety is indecomposable. 
Proposition~\ref{P:delta} shows that $\Delta$ is killed by the squarefree
integer $h_0(2s)$, and we check that $h_0(2s)$ is coprime to~$q$, because 
$s^2 = q$ and the constant term of $h_0$ is coprime to $q$ since $h_0$ is 
ordinary. Since $\Delta$ is nontrivial, we can choose a prime $\ell$ that 
divides~$\#\Delta$, and we know that $\ell$ is coprime to~$q$. Let $G$ be the 
$\ell$-power torsion part of~$\Delta$. Again because $\Delta$ is killed by a 
squarefree integer, we see that $G = \Delta[\ell]$.

Let $M$ be the finite $R$-module associated to~$G$. Then $R$ acts on $M$ via 
reduction to the ring $R/\frakp$, where $\frakp$ is the ideal $(\ell,F-s,V-s)$
of~$R$. It is clear that $R/\frakp\cong\FF_\ell$, so the ideal $\frakp$ is
prime.

Now we again use the fact that $h_0(2s)$ is squarefree. Since $\frakp$ contains
both $F-s$ and~$V-s$, its square $\frakp^2$ contains 
$(F-s)(V -s) = s(2s - F - V)$, and since $s$ is coprime to $\ell$ and hence not
in~$\frakp$, we see that $F + V \equiv 2s\bmod \frakp^2$. Therefore
$0 = h_0(F + V) \equiv h_0(2s)\bmod \frakp^2$. Since $h_0(2s)$ is squarefree,
one of its prime divisors must lie in $\frakp^2$, and since $\ell\in\frakp$, 
this prime must be~$\ell$.

Using the congruences $V \equiv 2s - F\bmod \frakp^2$ and 
$\ell \equiv 0\bmod \frakp^2$, it is straightforward to show that every element
of $R$ is congruent modulo $\frakp^2$ to an element of the form $aF + b$, with
$0\le a,b < \ell$. It follows that $\#(R/\frakp^2) = \ell^2$ and that 
$\frakp/\frakp^2$ is a $1$-dimensional $R/\frakp$-module, so the localization 
$R_\frakp$ is a discrete valuation ring and the prime $\frakp$ is nonsingular.

Let $\frakA$ be the Deligne module associated with the ordinary variety~$A_1$, 
so that, by~\cite[Proposition~4.14, p.~2373]{Howe1995}, the $R$-module
associated to $A_1[\ell]$ is $\frakA / \ell\frakA$. Since $h_0$ is irreducible,
$\frakA$ can be viewed as a sub-$R$-module of $R\otimes\QQ$, and since $\frakp$
is a nonsingular prime of~$R$, the $\frakp$-primary part of $\frakA/\ell\frakA$
is isomorphic to the $\frakp$-primary part of $R/\ell R$. The latter is simply 
$R/\frakp^e$, where $e$ is the $\frakp$-adic valuation of~$\ell$.

Since the group scheme $\Delta$ can be embedded in $A_1$, the group scheme
$G = \Delta[\ell]$ can be embedded in $A_1[\ell]$, so the $R$-module $M$
associated to $G$ can be embedded in $R/\frakp^e$. Since $M$ is a 
$\frakp$-torsion module, $M$ can be embedded in the $\frakp$-torsion submodule
of $R/\frakp^e$, and since $\frakp$ is nonsingular, this torsion submodule is 
$\frakp^{e-1}/\frakp^e\cong R/\frakp$. Since $M$ is nontrivial, we must have
$M\cong R/\frakp$, so $\#M = \ell$. This shows that the rank of the group scheme
$G = \Delta[\ell]$ is~$\ell$. But $G$ is the $\ell$-power torsion part
of~$\Delta$, and so must have square rank, because polarizations have square 
degree and $\Delta$ is the kernel of a polarization, by 
Proposition~\ref{P:splitting}. This contradiction shows that there is no 
principally polarized abelian variety  in the isogeny class.
\end{proof}

\section{Deducing and using information about Jacobians}
\label{sec:deducing}

Suppose we have an isogeny class $\calI$ of abelian varieties over a finite
field~$\Fq$, and suppose we have been unable to prove that it contains no 
geometrically indecomposable principally polarized varieties. Suppose further
that the Weil polynomial $f$ of $\calI$ could conceivably be the Weil polynomial
of the Jacobian of a curve~$C$, in the sense that for every~$d>0$ the Weil
polynomial would not predict a negative number of degree-$d$ places on~$C$. 

For some such isogeny classes we can use the Weil polynomial to prove that every
Jacobian in the isogeny class must come from a curve with some special geometric
property. Sometimes the properties we can deduce are not consistent with other
facts we know about the curve. Other times, we can efficiently search through 
the set of curves with the given properties and check to see whether any of them
has Weil polynomial equal to~$f$.

In this section, we will describe three classes of properties of curves that we
can sometimes deduce from their Weil polynomials. First, there are Weil
polynomials that imply that a curve with that Weil polynomial must have a 
nontrivial automorphism of a known degree. Second, there are Weil polynomials 
that imply that a curve with that Weil polynomial must have a map of known 
degree to an elliptic curve of known trace. And third, there are Weil 
polynomials that imply that a curve with that Weil polynomial must be definable
over a proper subfield of the base field. We discuss the first type in 
Sections~\ref{ssec:automorphisms} through~\ref{ssec:nonsplit}, the second in 
Section~\ref{ssec:knowndegree}, and the third in Section~\ref{ssec:descent}.

\subsection{Curves with nontrivial automorphisms: Introduction}
\label{ssec:automorphisms}

In Section~\ref{ssec:resultant1} we proved Proposition~\ref{P:delta}, which, 
roughly speaking, gives us a bound on how far away an abelian variety in a
nonsimple isogeny class is from a product of two nonzero varieties with no
isogeny factors in common. Serre's resultant~1 method makes use of this idea;
if an abelian variety is \emph{isomorphic} to a product of two such varieties,
all of its polarizations are decomposable.

It is also sometimes possible to use Proposition~\ref{P:delta} to show that
every curve whose Jacobian lies in a particular isogeny class must have an 
automorphism of a particular order. Before explaining this, let us review the
definition of the Rosati involution, the definition of an automorphism of a
polarized abelian variety, and the connection between automorphisms of a curve 
and automorphisms of its polarized Jacobian.

Let $A$ be an abelian variety over a field~$k$ and let $\lambda\colon A\to\Ahat$
be a polarization (not necessarily principal) of~$A$. The
\emph{Rosati involution} on $(\End A)\otimes \QQ$ associated to the polarization
$\lambda$ is the involution that sends an endomorphism $\beta$ to the element 
$\beta^\dagger \colonequals \lambda^{-1}\betahat\lambda$ of 
$(\End A)\otimes \QQ$. Let $\Tr$ denote the trace from $(\End A)\otimes \QQ$ 
to~$\QQ$. The bilinear form on $(\End A)\otimes \QQ$ that sends a pair
$(\beta,\gamma)$ to $\Tr \beta\gamma^\dagger$ is positive 
definite~\cite[Theorem~17.3, p.~138]{Milne1986a}. It is perhaps unfortunate that
the standard notation for the Rosati involution does not make reference to the
polarization $\lambda$, because in general different polarizations can give rise
to different involutions. However, we note that the only positive definite
involution on a totally real number field is the identity, and the only positive
definite involution on a CM field is complex conjugation. Since the center of
$(\End A)\otimes \QQ$ is a product of number fields that are each either totally
real or CM, the Rosati involution restricted to the center of
$(\End A)\otimes \QQ$ \emph{is} independent of the polarization used to define
it.

An \emph{automorphism} of the polarized variety $(A,\lambda)$ is an automorphism
$\alpha$ of $A$ such that the diagram 
\[
\xymatrix@C=5em{
A\ar[r]^\alpha\ar[d]_(0.4)\lambda & A\ar[d]^(0.4)\lambda\\
\Ahat                        & \Ahat\ar[l]^{\alphahat}\\
}
\]
commutes. Equivalently, $\alpha$ is an automorphism of $(A,\lambda)$ if and only
if $\alpha^\dagger\alpha = 1$, where $\dagger$ denotes the Rosati involution
associated to $\lambda$. We see, for example, that if $(\End A)\otimes \QQ$ is a
CM field, then $\alpha\in\End A$ is an automorphism of $(A,\lambda)$ if and only
if $\alpha$ is a root of unity.

Finally, if $\varepsilon$ is an automorphism of a curve $C$ over a field~$k$,
then the pullback $\varepsilon^*$ gives an automorphism of the polarized
Jacobian $(J,\lambda)$ of~$C$, and we have the following result.

\begin{proposition}
\label{P:torelli}
The map $\varepsilon\mapsto(\varepsilon^{-1})^*$ gives a group homomorphism
$\varphi\colon\Aut C\to\Aut(J,\lambda)$ such that
\begin{itemize}
\item[(a)] $\varphi$ is injective\textup{;}
\item[(b)] if $C$ is hyperelliptic, then $\varphi$ is an isomorphism\textup{;}
\item[(c)] if $C$ is not hyperelliptic, then the cokernel of $\varphi$ 
           has order~$2$, and in fact 
           \[\Aut(J,\lambda) \cong \{\pm1\} \times \varphi(\Aut C).\]
\end{itemize}           
\end{proposition}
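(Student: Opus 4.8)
The statement is the Torelli theorem in the form that describes automorphisms of a curve versus automorphisms of its principally polarized Jacobian. The plan is to first verify that $\varepsilon \mapsto (\varepsilon^{-1})^*$ is a homomorphism — the inverse is needed to convert the contravariance of pullback into a covariant group action — and then to deduce (a), (b), and (c) from the classical strong Torelli theorem. The key input is the theorem of Torelli (see Milne's account in the companion to \cite{Milne1986a}, or Serre's \cite[Appendix]{Serre2020}): if $(J_C, \lambda_C)$ and $(J_{C'}, \lambda_{C'})$ are canonically polarized Jacobians of curves of genus $g > 0$ and $\beta\colon (J_C,\lambda_C) \to (J_{C'},\lambda_{C'})$ is an isomorphism of polarized varieties, then $\beta$ is induced by an isomorphism of curves $C \to C'$, unique up to the hyperelliptic involution — and in the non-hyperelliptic case the curve isomorphism is determined up to sign by $\beta$ together with the choice of which of $\pm\beta$ one started with.

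First I would check injectivity of $\varphi$. If $(\varepsilon^{-1})^*$ is the identity on $J$, then in particular it fixes the image of $C$ under an Abel–Jacobi embedding (using a fixed base point), and functoriality of the Abel–Jacobi map forces $\varepsilon$ to fix $C$ pointwise, hence $\varepsilon = \mathrm{id}$; this gives (a). For (b) and (c), apply the strong Torelli theorem with $C' = C$: every automorphism $\beta$ of $(J,\lambda)$ is induced by some automorphism of $C$, but possibly only after composing with $-1$, which also lies in $\Aut(J,\lambda)$ since $(-1)^\dagger(-1) = 1$. Thus $\Aut(J,\lambda)$ is generated by $\varphi(\Aut C)$ together with $-1$. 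The remaining point is to determine when $-1$ already lies in $\varphi(\Aut C)$: this happens exactly when $C$ is hyperelliptic, because the hyperelliptic involution $\iota$ acts as $-1$ on $J$ (it acts as $-1$ on the space of regular differentials, equivalently on $H^1$), whereas for a non-hyperelliptic curve the canonical embedding is nondegenerate and no automorphism of $C$ can act as $-1$ on differentials. This settles (b). For (c), once we know $-1 \notin \varphi(\Aut C)$, we get $\Aut(J,\lambda) = \langle -1 \rangle \cdot \varphi(\Aut C)$ with $\langle -1\rangle \cap \varphi(\Aut C) = \{1\}$; since $-1$ is central, the product is direct, giving $\Aut(J,\lambda) \cong \{\pm 1\} \times \varphi(\Aut C)$ and in particular the cokernel of $\varphi$ has order $2$.

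The main obstacle is marshalling the precise form of the Torelli theorem needed here: the naive statement ``$J_C \cong J_{C'}$ as polarized varieties implies $C \cong C'$'' is not enough; one needs the refined bookkeeping of \emph{which} isomorphisms of polarized Jacobians come from curve isomorphisms, including the sign ambiguity in the non-hyperelliptic case. I would handle this by citing the strong Torelli theorem in the sharp form (for instance as stated by Serre, or in Weil's original treatment, or via Milne's exposition), rather than reproving it. Given that input, the argument above is essentially formal. A minor point to be careful about is the base-point dependence of the Abel–Jacobi map in the injectivity argument: one should either fix a base point and argue that $\varepsilon$ fixing the Abel–Jacobi image forces $\varepsilon$ to be the identity, or phrase injectivity directly via the action on the theta-divisor and the fact that $C$ is recovered from $(J,\lambda)$.
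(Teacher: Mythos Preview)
Your proposal is correct and takes essentially the same approach as the paper: both reduce the statement to the strong form of Torelli's theorem. The paper's proof is even terser---it simply cites Milne's Theorem~12.1 and says the proposition follows immediately---so your version spells out the mechanics (why the inverse is needed for covariance, why $-1$ lies in the image exactly in the hyperelliptic case, and why the product is direct) that the paper leaves implicit in the citation.
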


\begin{remark}
Note that if $\alpha$ and $\beta$ are automorphisms of $C$, then 
$(\alpha\beta)^* = \beta^*\alpha^*$; the perhaps-unexpected inverse that appears
in the formula for $\varphi$ is there so that we have
$\varphi(\alpha\beta) =\varphi(\alpha)\varphi(\beta)$.
\end{remark}

\begin{proof}[Proof of Proposition~\textup{\ref{P:torelli}}]
The proposition follows immediately from Torelli's 
theorem~\cite[Theorem~12.1, p.~202]{Milne1986b}.
\end{proof}

In particular, item (c) shows that if $\alpha$ is an automorphism of 
$(J,\lambda)$, then there is an automorphism $\varepsilon$ of $C$ such that 
either $\alpha = \varepsilon^*$ or $\alpha = -\varepsilon^*$.

With this background information in hand, we can describe how some Weil 
polynomials imply that every curve with that Weil polynomial has nontrivial
automorphisms.

\begin{proposition}
\label{P:aut}
Let $\calI_1$ and $\calI_2$ be isogeny classes of nonzero abelian varieties over
$\Fq$ with real Weil polynomials $h_1$ and~$h_2$, respectively, and suppose 
$h_1$ and $h_2$ are coprime to one another in~$\QQ[x]$. Let $\calI$ be the 
isogeny class that contains all varieties isogenous to $A_1\times A_2$ for 
$A_i\in \calI_i$, so that the real Weil polynomial of $\calI$ is $h_1 h_2$.

Suppose there is an integer $n>1$ with the following property\textup{:} For
every $A\in \calI$ that has a principal polarization, if we let 
$A_1\in\calI_1$, $A_2\in\calI_2$, and $\Delta\hookrightarrow A_1\times A_2$ be
as in Proposition~\textup{\ref{P:splitting}}, then there is an automorphism
$\alpha$ of $A_2$ such that
\begin{itemize}
\item[(a)] $\alpha$ has order $n$\textup{;}
\item[(b)] $\alpha^\dagger\alpha = 1$ for every Rosati involution on
           $(\End A_2)\otimes\QQ$\textup{;} and
\item[(c)] $\alpha - 1$ annihilates the image of $\Delta$ in~$A_2$.
\end{itemize}
Then every curve $C$ whose Jacobian has real Weil polynomial $h_1 h_2$ has an
automorphism $\varepsilon$ of order~$n$\textup{;} if $n=2$ and $C$ is 
hyperelliptic, there is an $\varepsilon$ of order $2$ that is \emph{not} the 
hyperelliptic involution.
\end{proposition}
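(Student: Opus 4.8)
The plan is to start from a curve $C$ whose Jacobian $(J,\lambda)$ has real Weil polynomial $h_1h_2$, and to construct the required automorphism $\varepsilon$ of $C$ by first building an automorphism of the polarized Jacobian and then invoking the Torelli theorem in the form of Proposition~\ref{P:torelli}. Since $J\in\calI$ and $J$ carries the principal polarization $\lambda$, Proposition~\ref{P:splitting} applies: it produces $A_1\in\calI_1$, $A_2\in\calI_2$, a finite group scheme $\Delta$, and an exact sequence $0\to\Delta\to A_1\times A_2\to J\to 0$ in which each projection restricts to an embedding $\Delta\hookrightarrow A_i$, and moreover $\varphi^*\lambda=\lambda_1\times\lambda_2$ with $\ker\lambda_i\cong\Delta$ for each $i$. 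By hypothesis there is an automorphism $\alpha$ of $A_2$ of order $n$ with $\alpha^\dagger\alpha=1$ for every Rosati involution and with $\alpha-1$ killing the image of $\Delta$ in $A_2$.

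Next I would push $\mathrm{id}_{A_1}\times\alpha$ down through the isogeny $A_1\times A_2\to J$. The point of condition~(c) is exactly that $(\mathrm{id}\times\alpha)$ fixes $\Delta$ pointwise --- on the $A_1$-component it is the identity, and on the $A_2$-component $\alpha-1$ annihilates the image of $\Delta$ --- so $(\mathrm{id}\times\alpha)(\Delta)=\Delta$ and $(\mathrm{id}\times\alpha)$ descends to an endomorphism $\beta$ of $J=(A_1\times A_2)/\Delta$. Because $\mathrm{id}\times\alpha$ is an automorphism of $A_1\times A_2$ preserving $\Delta$, its descent $\beta$ is an automorphism of $J$, and its order divides $n$; in fact the order is exactly $n$, since the restriction of $\beta$ to the sub-abelian variety $A_2\subseteq J$ (the image of $0\times A_2$) agrees with $\alpha$ up to the finite kernel and hence $\beta$ cannot have order smaller than $n$ --- more carefully, $\beta$ induces $\alpha$ on $A_2\otimes\QQ\subseteq(\End J)\otimes\QQ$ under the splitting $(\End J)\otimes\QQ\cong E_1\times E_2$, so $\mathrm{ord}(\beta)=\mathrm{lcm}(1,\mathrm{ord}(\alpha))=n$.

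It then remains to check that $\beta$ is an automorphism of the \emph{polarized} variety $(J,\lambda)$, i.e. that $\beta^\dagger\beta=1$ for the Rosati involution attached to $\lambda$. Here I would use that the Rosati involution restricted to the center of $(\End J)\otimes\QQ$ is independent of the polarization, as recalled in Section~\ref{ssec:automorphisms}: under $(\End J)\otimes\QQ\cong E_1\times E_2$ the element $\beta$ corresponds to $(1,\alpha)$, which lies in a product of number fields that are each totally real or CM, so its Rosati image is $(1,\alpha^\dagger)$ where $\alpha^\dagger$ is computed by \emph{any} Rosati involution on $(\End A_2)\otimes\QQ$; condition~(b) then gives $\beta^\dagger\beta=(1,\alpha^\dagger\alpha)=(1,1)=1$. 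Hence $\beta\in\Aut(J,\lambda)$. Finally I apply Proposition~\ref{P:torelli}: by part~(c), $\beta=\pm\varepsilon^*$ for some $\varepsilon\in\Aut C$; if $C$ is hyperelliptic then part~(b) gives $\beta=\varepsilon^*$ outright. In either case $\varepsilon$ (or its composition with the hyperelliptic involution, which only changes $\beta$ by a sign and does not change its order when $n>2$, and when $n=2$ produces precisely the promised non-hyperelliptic involution) has order $n$, which is the assertion.

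The step I expect to be the main obstacle --- or at least the one requiring the most care --- is verifying that $\mathrm{id}\times\alpha$ genuinely descends to an \emph{automorphism} of $J$ and tracking its order precisely: the subtlety is that condition~(c) only says $\alpha-1$ kills the image of $\Delta$ \emph{in $A_2$}, so one must be careful that the diagonal-type embedding $\Delta\hookrightarrow A_1\times A_2$ coming from the two projections is respected, and that the descended map is invertible (not merely an isogeny). The sign ambiguity from Proposition~\ref{P:torelli}(c) in the non-hyperelliptic case, and the resulting bookkeeping for $n=2$ versus $n>2$, is a secondary point to handle cleanly.
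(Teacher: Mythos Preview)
Your approach matches the paper's: descend $1\times\alpha$ through the isogeny $A_1\times A_2\to J$ to get $\beta\in\Aut(J,\lambda)$, then apply Proposition~\ref{P:torelli}. Two steps need repair.

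First, your justification that $\beta^\dagger\beta=1$ is flawed. You assert that $(1,\alpha)$ lies in a product of totally real or CM number fields, so that the Rosati involution there is polarization-independent; but $(\End A_2)\otimes\QQ$ need not be commutative (for instance if $A_2$ is isogenous to a power of an elliptic curve it is a matrix algebra), and nothing in the hypotheses forces $\alpha$ to be central. The correct argument is simpler and is what the paper does: since $\varphi^*\lambda=\lambda_1\times\lambda_2$, the Rosati involution on $(\End J)\otimes\QQ$ for $\lambda$ transports via $\varphi$ to the Rosati for $\lambda_1\times\lambda_2$, which on the $E_2$ block is the Rosati attached to~$\lambda_2$. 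Condition~(b) applied to \emph{that} Rosati gives $\alpha^\dagger\alpha=1$, hence $\beta^\dagger\beta=1$. Equivalently, $1\times\alpha\in\Aut(A_1\times A_2,\lambda_1\times\lambda_2)$ descends to $\beta\in\Aut(J,\lambda)$.

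Second, your endgame is muddled. When $C$ is non-hyperelliptic and $\varepsilon^*=-\beta$, there is no ``hyperelliptic involution'' to compose with, so your parenthetical fix does not apply. The paper handles this by cases on the parity of~$n$: if $n$ is odd then $(\varepsilon^2)^*=\beta^2$ has order~$n$, so $\varepsilon^2$ works; if $n$ is even one checks directly that $\gamma\colonequals(-1)\times(-\alpha)$ has order~$n$ (its order is even since the first coordinate forces $m$ even, and then $\gamma^{2m}=1\times\alpha^{2m}$ first equals $1$ at $m=n/2$), so $\varepsilon$ itself has order~$n$. For the $n=2$ clause, the paper notes that neither $1\times\alpha$ nor $(-1)\times(-\alpha)$ equals $(-1)\times(-1)$ on $A_1\times A_2$ (the first because $A_1\ne0$, the second because $\alpha\ne1$), so $\varepsilon^*\ne-1$ and $\varepsilon$ is not the hyperelliptic involution.
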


\begin{remark}
In actual applications we are likely to use the proof of this result in addition
to its statement, because specific properties of the automorphisms $\alpha$ will
give us more information about the properties of the possible $\varepsilon$. See
for example Theorem~\ref{T:resultant2} and Section~\ref{ssec:otherauts}.
\end{remark}

\begin{proof}[Proof of Proposition~\textup{\ref{P:aut}}]
Suppose $(J,\lambda)$ is the polarized Jacobian of a curve $C$ with real Weil
polynomial $h_1 h_2$. We let $A_1$, $A_2$, and 
$\Delta\hookrightarrow A_1\times A_2$ be associated to $J$ as in 
Proposition~\ref{P:splitting}, so that there is an exact sequence
\begin{equation}
\label{EQ:deltaJ}
0 \longrightarrow \Delta \longrightarrow A_1\times A_2\longrightarrow J \longrightarrow 0.
\end{equation}
The polarization $\lambda$ on $J$ pulls back to give a polarization 
$\lambda_1\times\lambda_2$ on $A_1 \times A_2$ of degree $(\#\Delta)^2$, where 
$\#\Delta$ denotes the rank of the group scheme~$\Delta$. Let 
$\alpha \in \Aut A_2$ be as in the statement of the proposition. Note that 
$\alpha$ is an automorphism of the polarized variety $(A_2,\lambda_2)$ because
$\alpha^\dagger\alpha = 1$ for the Rosati involution associated to~$\lambda_2$.
Consider the automorphism $1 \times \alpha$ of the polarized variety 
$(A_1\times A_2, \lambda_1\times\lambda_2)$. This automorphism acts trivially on
the image of $\Delta$ in $A_1\times A_2$, so it descends to give an automorphism
$\beta$ of the polarized variety $(J,\lambda)$. Clearly $\beta$ has order~$n$.

Our comments above about the relationship between the automorphism group of a
curve and that of its Jacobian show that there is an automorphism $\varepsilon$
of $C$ such that $\varepsilon^* = \pm\beta$. If $\varepsilon^* = \beta$ then 
clearly $\varepsilon$ has order $n$ and we are done, so assume that
$\varepsilon^* = -\beta$. If $n$ is odd then $\varepsilon^2$ has order~$n$ and 
we are done, so assume that $n$ is even. Then $\varepsilon$ has the same order 
as the automorphism $\gamma\colonequals(-1)\times(-\alpha)$ of $A_1\times A_2$. 
Clearly the order of $\gamma$ is even. For every $m$ we have
$\gamma^{2m} = 1 \times \alpha^{2m}$, and the smallest $m$ with $\alpha^{2m}=1$
is~$n/2$. Therefore $\gamma$ has order~$n$, and hence $\varepsilon$ does as 
well.

Suppose~$n=2$. Since neither $1\times\alpha$ nor $(-1)\times(-\alpha)$ is equal 
to $(-1)\times(-1)$ on $A_1\times A_2$, we see that neither $\beta$ nor $-\beta$
is equal to~$-1$, so $\varepsilon^* \ne -1$ and $\varepsilon$ is not the 
hyperelliptic involution.
\end{proof}

Now we turn to some applications of Proposition~\ref{P:aut}.

\subsection{The resultant 2 method}
\label{ssec:resultant2}

The primary application of Proposition~\ref{P:aut} is when we can take the
automorphism $\alpha$ of $A_2$ to be~$-1$. 

\begin{theorem}
\label{T:resultant2}
Suppose the real Weil polynomial $h$ of an isogeny class $\calI$ of abelian 
varieties over $\Fq$ can be written as a product $h = h_1 h_2$ in~$\ZZ[x]$,
where the reduced resultant of the radicals of $h_1$ and $h_2$ is~$2$. If the 
Jacobian of a curve $C$ lies in~$\calI$, then $C$ has a nonhyperelliptic 
involution, and the quotient of $C$ by this involution is a curve $D$ whose real
Weil polynomial is either $h_1$ or~$h_2$.
\end{theorem}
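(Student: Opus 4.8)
The plan is to deduce this from Proposition~\ref{P:aut} by taking $\alpha = -1$ on $A_2$, and then to identify the quotient curve. First I would check that $\alpha = -1$ satisfies conditions (a)--(c) of Proposition~\ref{P:aut} with $n = 2$. Condition (a) is clear since $-1$ has order $2$ (note $A_2$ is nonzero). Condition (b) holds because $(-1)^\dagger(-1) = 1$ for any Rosati involution, as $(-1)^\dagger = -1$. For condition (c), I must show that $\alpha - 1 = -2$ annihilates the image of $\Delta$ in $A_2$; but by Proposition~\ref{P:delta}, the reduced resultant $r$ of the radicals of $h_1$ and $h_2$ annihilates $\Delta$, and by hypothesis $r = 2$, so indeed $2$ kills $\Delta$ and hence its image in $A_2$. (I should double-check the sign convention: whether to use $h_1 = $ the factor with $A_2 = $ the second factor, or to note that the roles of $h_1, h_2$ are symmetric, so $\alpha$ could be taken on either factor; this symmetry is exactly what produces the ``either $h_1$ or $h_2$'' in the conclusion.) Applying Proposition~\ref{P:aut} then yields an automorphism $\varepsilon$ of $C$ of order $2$ which, if $C$ is hyperelliptic, is not the hyperelliptic involution --- i.e.\ a nonhyperelliptic involution in all cases.

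Next I would identify the quotient $D = C/\langle\varepsilon\rangle$ and compute its real Weil polynomial. Tracing through the proof of Proposition~\ref{P:aut}: the automorphism $\beta$ of $(J,\lambda)$ descends from $1 \times (-1)$ on $A_1 \times A_2$, and $\varepsilon^* = \pm\beta$. The key point is that on $A_1$, the induced map is $\pm 1$, and on $A_2$ it is $\mp 1$; thus $\beta$ (or $-\beta$) acts as the identity on $A_1$ and as $-1$ on $A_2$ up to the isogeny $\varphi\colon A_1 \times A_2 \to J$. The quotient Jacobian $\Jac D$ is the image of the $(1+\varepsilon^*)/2$-part of $J$ (rationally), which under $\varphi$ corresponds to whichever of $A_1$, $A_2$ the involution fixes --- so $\Jac D$ is isogenous either to $A_1$ or to $A_2$, giving real Weil polynomial $h_1$ or $h_2$ respectively. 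More carefully, $\Jac D$ is the connected component of the kernel of $1 - \varepsilon^*$ on $J$ (equivalently the image of $1 + \varepsilon^*$), and since $1 + \varepsilon^*$ is $\pm 2$ on one factor and $0$ on the other after pulling back along $\varphi$, its image is isogenous to exactly one of $A_1, A_2$.

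The main obstacle I expect is the last identification: making precise that the quotient curve's Jacobian is isogenous to the ``fixed'' factor $A_i$ rather than merely landing somewhere in the isogeny class of $h_1 h_2$. This requires the standard fact that for a curve $C$ with involution $\varepsilon$, one has $\Jac C \sim \Jac(C/\varepsilon) \times \Jac(C/\varepsilon')$ where $\varepsilon'$ is the ``antiholomorphic'' or Prym part --- concretely, $\Jac(C/\varepsilon)$ is the image of $1 + \varepsilon^*$ in $\Jac C$ --- together with the observation from the proof of Proposition~\ref{P:aut} that $\varepsilon^*$ acts (up to sign) as $+1$ on one of the $A_i$ and $-1$ on the other. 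Combined with the fact that $h_1$ and $h_2$ are coprime, so $\Hom(A_1, A_2) = 0$ and the decomposition of $J$ up to isogeny into $A_1 \times A_2$ is canonical, this pins down $\Jac D$ as isogenous to the $+1$-eigenfactor, hence with real Weil polynomial $h_1$ or $h_2$. One caveat to handle: if $C$ is hyperelliptic and $n = 2$, I must be sure the $\varepsilon$ produced is genuinely nonhyperelliptic, but this is precisely the last clause of Proposition~\ref{P:aut}, which applies here since neither $1 \times (-1)$ nor $(-1)\times(-1)^{\phantom{1}}$... wait, $(-1) \times (+1)$ --- in any case neither descended automorphism equals $-1$ on $J$, as argued there.
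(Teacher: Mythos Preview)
Your proposal is correct and follows essentially the same approach as the paper: apply Proposition~\ref{P:aut} with $\alpha=-1$ (the hypothesis that the reduced resultant of the radicals is~$2$ gives condition~(c) via Proposition~\ref{P:delta}), and then identify $\Jac D$ with the factor of $A_1\times A_2$ on which $\varepsilon^*$ acts trivially. The paper phrases this last step as computing the reduced connected component of the kernel of $\varepsilon^*-1 = \pm(1\times\alpha)-1\times 1$ on $A_1\times A_2$, which is $0\times(-2)$ or $(-2)\times 0$, so the kernel is $A_1$ or $A_2$; your formulation via the image of $1+\varepsilon^*$ is equivalent.
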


\begin{proof}
This result was proven by Howe and 
Lauter~\cite[Theorem~2.2(b), p.~176]{HoweLauter2012} based on an earlier and 
weaker version of the result~\cite[Theorem~1(b), p.~1678]{HoweLauter2003}, but 
it also follows very quickly from what we have already proven here.

Since the reduced resultant of the radicals of $h_1$ and $h_2$ is~$2$, we see
from Proposition~\ref{P:delta} that the $\Delta$ from 
Proposition~\ref{P:splitting} is killed by~$2$. Thus we can take ${\alpha=-1}$
in Proposition~\ref{P:aut}, and we find that $C$ has a nonhyperelliptic 
involution~$\varepsilon$. 

Let $J$, $A_1$, $A_2$,~$\Delta$, and $\beta$ be as in the proof of 
Proposition~\ref{P:aut}. The Jacobian of $D$ is isogenous to the reduced 
subvariety of the connected component of the kernel of the isogeny 
$\varepsilon^* - 1 = \pm\beta - 1$, and the isogeny class of this subvariety of
$J$ is the same as the isogeny class of the reduced subvariety of the connected
component of the kernel of the isogeny $\pm(1\times \alpha) - (1\times 1)$ on
$A_1\times A_2$. Since we chose $\alpha$ to be~$-1$, this isogeny is either
$0\times (-2)$ or $(-2)\times 0$, so the reduced subvariety of the connected
component of the kernel is either $A_1$ or~$A_2$. Thus, the real Weil polynomial
of $D$ is either $h_1$ or~$h_2$.
\end{proof}

We refer to this result as the ``resultant~2" method. It has been very useful
in populating the online table of known bounds 
on~$N_q(g)$~\cite{vanderGeerHoweEtAl2009}. For example, of the $26$ isogeny 
classes of $8$-dimensional abelian varieties over $\FF_4$ that Lauter's 
algorithm produces when searching for a genus-$8$ curve over $\FF_4$ with 
exactly $24$ rational points, $18$ are eliminated by the resultant~1 method, 
while $6$ are eliminated by short arguments that rely on the resultant~2 
method~\cite[\S 2]{Howe2021}. (One of the other two isogeny classes is 
eliminated by using Theorem~\ref{T:supersingular}, and the other by an argument
based on deducing the existence of a degree-$3$ map from a curve whose Jacobian
lies in the given isogeny class to an elliptic curve, as in 
Section~\ref{ssec:knowndegree} below.)

Even when the resultant~2 method does not immediately give us a proof that no
curve has a certain real Weil polynomial, it can tell us where we should look
for such a curve. For example, many of the upper bounds on $N_q(4)$ that were 
produced in earlier work of the author~\cite{Howe2012} rely on computer searches
through particular families of genus-$4$ curves. If the resultant~2 method tells
us that a genus-$4$ curve with a certain number of points must be a double cover
of an elliptic curve with a given trace, we can enumerate all of the elliptic
curves with that trace, and then enumerate all of their genus-$4$ double covers.
From the Riemann--Hurwitz formula, we find that in odd characteristic a 
genus-$4$ double cover $C$ of an elliptic curve $E$ over $\Fq$ must be ramified
over $6$ points, and can be given by an equation $z^2 = f$,  where $f$ is a
function on $E$ whose divisor is of the form
\[ 
P_1 + \cdots + P_6 + 2Q - 8\infty,
\]
where the $P_i$ are distinct geometric points on~$E$ and $Q$ is a rational point
on~$E$. By composing the double cover $C\to E$ with a translation $E\to E$, we
can also assume that $Q$ lies in a fixed set of representatives for 
$E(\Fq)/3E(\Fq)$.

For a given elliptic curve~$E$, we can loop through the possible points~$Q$, and
then do a Riemann--Roch computation to find all functions $f$ with a divisor of
the correct shape. For each $f$ we can compute the Weil polynomial of the double
cover of $E$ defined by $z^2 = f$, and check to see whether it is the Weil
polynomial we are looking for.

Similarly, if the resultant~2 method tells us that a genus-$4$ curve is a
double cover of a genus-$2$ curve with a known real Weil polynomial, we can 
enumerate all of the genus-$2$ curves with this real Weil polynomial, and then
enumerate the genus-$4$ double covers of this curve by considering what the
ramification divisor must look like and then doing appropriate Riemann--Roch
calculations to find all functions $f$ such that $z^2 = f$ has a ramification
divisor of the right shape.

Together, these two enumeration methods produced the current lower bounds on 
$N_q(4)$ for most of the odd prime powers less than 
$100$~\cite[Table~3, p.~84]{Howe2012}, and many of the upper bounds as 
well~\cite[Table~2, p.~72]{Howe2012}. In addition, values of $N_q(g)$ for some
larger $g$ have been improved by similar 
enumerations~\cite[{\S}6.3]{HoweLauter2003}.

\subsection{Automorphisms of order greater than \texorpdfstring{$2$}{2}}
\label{ssec:otherauts}

Let us give an example of an application of Proposition~\ref{P:aut} where the 
automorphism has order~$3$. 

\begin{theorem}
\label{T:order3}
Every genus-$6$ curve $C$ over $\FF_2$ with real Weil polynomial
\[
(x^2 + x - 1)(x^4 + 5x^3 + 5x^2 - 5x - 5)
\]
has an automorphism $\varepsilon$ of order~$3$, and the quotient of $C$ by the
group $\langle\varepsilon\rangle$ is a genus-$2$ curve $D$ with real Weil 
polynomial $x^2 + x - 1$.
\end{theorem}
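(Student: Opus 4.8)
The plan is to apply Proposition~\ref{P:aut} with $n=3$, taking $h_1 = x^2+x-1$ and $h_2 = x^4+5x^3+5x^2-5x-5$. First I would record some facts about this factorization. Both $h_1$ and $h_2$ are irreducible over $\QQ$, and they are distinct, so they are coprime in $\QQ[x]$ and coincide with their own radicals; thus the given polynomial $h = h_1 h_2$ is the real Weil polynomial of the isogeny class $\calI$ whose real Weil polynomial is $h_1 h_2$, assembled from $\calI_1$ and $\calI_2$ as in Proposition~\ref{P:splitting}. Reducing $h_2$ modulo $h_1$ gives $h_2 \equiv -3(x+1) \pmod{h_1}$, and a short manipulation then shows that the ideal $(h_1,h_2)$ of $\ZZ[x]$ equals $(h_1,3)$; since $h_1$ is irreducible modulo $3$, the reduced resultant of $h_1$ and $h_2$ is $3$. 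Hence, by Proposition~\ref{P:delta} and its proof, for every $A\in\calI$ the group scheme $\Delta$ of Proposition~\ref{P:splitting} is killed by $3$, and its image in $A_2$ is killed by $h_1(\pi_2+\pibar_2)$.

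Next I would construct the automorphism $\alpha$ of $A_2$ required by Proposition~\ref{P:aut}. Let $f_2(x) = x^4 h_2(x+2/x)$ be the Weil polynomial of~$\calI_2$; one checks that $f_2$ is irreducible, so that $K_2 \colonequals \QQ(\pi_2)$ is a CM field of degree~$8$ and every abelian variety in $\calI_2$ is ordinary. An explicit computation shows that $K_2$ contains a primitive cube root of unity $\zeta_3$, and in fact that $\zeta_3 \in \ZZ[\pi_2,\pibar_2]$; hence $\zeta_3$ defines an automorphism of order~$3$ on every abelian variety in~$\calI_2$. Taking $\alpha = \zeta_3$, condition~(a) of Proposition~\ref{P:aut} is immediate, and condition~(b) holds because $\zeta_3$ lies in the center $K_2$ of $(\End A_2)\otimes\QQ$, on which every Rosati involution acts as complex conjugation, so that $\alpha^\dagger\alpha = \overline{\zeta_3}\,\zeta_3 = 1$.

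The substantive step is condition~(c): that $\zeta_3 - 1$ annihilates the image of $\Delta$ in~$A_2$. By the first paragraph, that image is killed both by $3$ and by $h_1(\pi_2+\pibar_2)$, so it lies in $\ker\bigl(h_1(\pi_2+\pibar_2)\mid A_2[3]\bigr)$. The key observation is the numerical coincidence
\[
h_2 = h_1^2 + 3(x-1)(x+1)(x+2),
\]
so that, reducing modulo~$3$, one has $\overline{f_2} = g_0^2$ for an irreducible quartic $g_0\in\FF_3[x]$, and $\ZZ[\pi_2,\pibar_2]/3 \cong \FF_3[x]/(g_0^2)$ is a local ring whose maximal ideal $\bigl(g_0(\pi_2)\bigr)$ squares to zero. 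In this ring $\zeta_3-1$ is nonzero (since $3\nmid\zeta_3-1$ in $\ZZ[\zeta_3]$) and nilpotent (since $(\zeta_3-1)^2 = -3\zeta_3$), hence generates the maximal ideal; and a brief computation gives $\pi_2^2\,h_1(\pi_2+\pibar_2) \equiv g_0(\pi_2) \pmod 3$, so $h_1(\pi_2+\pibar_2)$ generates the same ideal. Therefore $\zeta_3-1$ annihilates $\ker\bigl(h_1(\pi_2+\pibar_2)\mid A_2[3]\bigr)$, and in particular the image of~$\Delta$. I expect this verification --- reconciling the group-scheme constraints on $\Delta$ with the action of $\zeta_3$ by means of the congruence $h_2\equiv h_1^2\pmod 3$ --- to be the main obstacle; the irreducibility checks and the claim $\zeta_3\in\ZZ[\pi_2,\pibar_2]$ are routine but not entirely automatic.

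With conditions~(a)--(c) established, Proposition~\ref{P:aut} yields, for every curve $C$ whose Jacobian has real Weil polynomial $h_1 h_2$, an automorphism $\varepsilon$ of $C$ with $\varepsilon^* = \pm\beta$, where $\beta\in\Aut(J,\lambda)$ is the automorphism of the polarized Jacobian descending from $1\times\zeta_3$ on $A_1\times A_2$. Since $3$ is odd, $\varepsilon$ or $\varepsilon^2$ has order~$3$; replacing $\varepsilon$ by $\varepsilon^2$ if necessary, we may assume that $\varepsilon^*$ acts on $\Jac C\sim A_1\times A_2$ as the identity on the $h_1$-factor $A_1$ and as a primitive cube root of unity on the $h_2$-factor~$A_2$. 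As $\zeta_3-1$ and $\zeta_3^2-1$ are isogenies on~$A_2$, the identity component of the subvariety of $\Jac C$ fixed by $\langle\varepsilon^*\rangle$ is isogenous to~$A_1$, so $\Jac\bigl(C/\langle\varepsilon\rangle\bigr)$ is isogenous to~$A_1$. Hence $D\colonequals C/\langle\varepsilon\rangle$ has genus~$2$ and real Weil polynomial $h_1 = x^2+x-1$, as claimed.
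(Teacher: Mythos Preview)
Your proof is correct and follows the same overall plan as the paper --- apply Proposition~\ref{P:aut} with $\alpha=\zeta_3$ --- but you verify condition~(c) by a genuinely different route. The paper first checks that $\ZZ[\pi_i,\pibar_i]$ are the maximal orders of $K_i$ and that both have class number~$1$, so that $A_1$ and $A_2$ are unique; it then uses Deligne modules to see that $A_1[3]$ is simple of rank~$81$ (because $3$ is inert in~$K_1$) and that $A_2[3]$ has a unique nontrivial proper subgroup scheme, namely the one killed by $\zeta_3-1$ (because $3\calO_{K_2}=\frakp^2$ with $\frakp=(\zeta_3-1)$). Nontriviality of $\Delta$ then forces its image in $A_2$ to be that subgroup scheme. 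Your argument instead exploits the congruence $h_2\equiv h_1^2\pmod 3$ to show that $\ZZ[\pi_2,\pibar_2]/3$ is local with square-zero maximal ideal, in which both $\zeta_3-1$ and $h_1(\pi_2+\pibar_2)$ are nonzero nilpotents and hence associates; since the image of $\Delta$ in $A_2$ is killed by the ideal $(3,h_1(\pi_2+\pibar_2))=(\zeta_3-1)$, condition~(c) follows. Your approach is more elementary --- no class numbers, no Deligne modules, no need to know that $\Delta$ is nontrivial --- while the paper's approach yields finer structural information (the exact shape of $A_1[3]$ and $A_2[3]$). One small point you glossed over: the isomorphism $\ZZ[\pi_2,\pibar_2]/3\cong\FF_3[x]/(g_0^2)$ is a priori only an isomorphism $\ZZ[\pi_2]/3\cong\FF_3[x]/(g_0^2)$, but it does extend because $\pi_2$ is a unit modulo~$3$ (the constant term of $f_2$ is $2^4$), so $\pibar_2=2\pi_2^{-1}$ already lies in the image of $\ZZ[\pi_2]/3$, and a surjection between finite rings of the same order is an isomorphism.
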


\begin{remark}
It turns out that the only genus-$2$ curve over $\FF_2$ with real Weil 
polynomial $x^2 + x - 1$ is the curve $D$ given by
\[
v^2 + (u^3 + u + 1)v + u(u^3 + u + 1) = 0.
\]
Suppose $C$ is a curve as in the statement of the theorem. The Riemann--Hurwitz 
formula shows that the Galois triple cover $C\to D$ must be ramified at $2$
geometric points of~$D$, and by looking at the number of low-degree places on
$C$ predicted by its Weil polynomial, we find that these ramification points
must actually constitute a degree-$2$ place on~$D$. The author used the ideas
of Chapman~\cite[\S 2]{Chapman1996} to search for Galois triple covers of $D$
ramified at a single degree-$2$ place, and found an example: The triple cover of
$D$ defined by
\begin{multline*}
w^3 + [(u^2 + 1)v + (u^6 + u^5 + u^4 + 1)]w \\
    + [(u^6 + u^5 + u^4 + u^3 + u)v + (u^8 + u^6 + u^4 + u)] = 0
\end{multline*}
is a genus-$6$ curve whose real Weil polynomial is the sextic given in the
statement of the theorem.
\end{remark}

\begin{remark}
The factors of the Weil polynomial in Theorem~\ref{T:order3} appear in an
isogeny class that arises in the search for genus-$12$ curves over $\FF_2$ with
$15$ points~\cite[Theorem~7.1, p.~203]{HoweLauter2012}. The application of 
Proposition~\ref{P:aut} with $n=3$ to that isogeny class shows that it contains
no Jacobians.
\end{remark}

\begin{proof}[Proof of Theorem~\textup{\ref{T:order3}}]
Let $h_1 = (x^2 + x - 1)$ and $h_2 = (x^4 + 5x^3 + 5x^2 - 5x - 5)$, let $f_1$
and $f_2$ be the corresponding Weil polynomials, let $K_1$ and $K_2$ be the 
CM~fields corresponding to $f_1$ and~$f_2$, and let $\pi_1$ and $\pi_2$ be
elements of $K_1$ and $K_2$ that satisfy $f_1$ and~$f_2$. The number field 
$K_2$ is in fact the $15$\up{th} cyclotomic field.

Using a formula for the discriminant of orders generated by Frobenius and 
Verschiebung~\cite[Proposition~9.4, p.~2384]{Howe1995}, we check that 
$\calO_1 \colonequals \ZZ[\pi_1,\pibar_1]$ is the maximal order of $K_1$ and 
$\calO_2 \colonequals \ZZ[\pi_2,\pibar_2]$ is the maximal order of~$K_2$. Both
orders have class number~$1$, so there is a unique abelian surface $A_1$ over
$\FF_2$ with real Weil polynomial~$h_1$ and a unique abelian four-fold $A_2$ 
over $\FF_2$ with real Weil polynomial~$h_2$.

The reduced resultant of $h_1$ and $h_2$ is~$3$, so by 
Propositions~\ref{P:splitting} and~\ref{P:delta} if there is a curve $C$ with
real Weil polynomial $h_1 h_2$ then there is an exact sequence
\[
0 \longrightarrow \Delta \longrightarrow A_1\times A_2\longrightarrow \Jac C \longrightarrow 0
\]
where $\Delta$ can be embedded in both $A_1[3]$ and~$A_2[3]$. 

As in the proof of Theorem~\ref{T:supersingular}, we can understand the finite
group scheme $A_1[3]$ in terms of an $\calO_1$-module. The Deligne module of
$A_1$ is isomorphic to $\calO_1$ because $\calO_1$ has class number~$1$, so the 
module associated to  $A_1[3]$ is simply $\calO_1/3\calO_1$, and since the
rational prime $3$ is inert in~$K_1$, the module $\calO_1/3\calO_1$ has no 
nontrivial submodules. Likewise, the $\calO_2$-module associated to $A_2[3]$ is 
$\calO_2/3\calO_2$. If we let $\zeta$ be one of the cube roots of unity 
in~$K_2$, then the rational prime $3$ decomposes in $\calO_2$ as $\frakp^2$,
where $\frakp$ is the prime ideal generated by $\zeta - 1$. Therefore the module
$\calO_2/3\calO_2$ has exactly one nontrivial submodule, whose order is~$81$.

The group scheme $\Delta$ cannot be trivial, because every polarization of 
$A_1\times A_2$ is decomposable. On the other hand, as we have just seen, the 
rank-$81$ group scheme $A_1[3]$ has no nontrivial subgroup schemes. Therefore 
$\Delta$ must be isomorphic to~$A_1[3]$. The rank-$3^8$ group scheme $A_2[3]$ 
has one nontrivial subgroup scheme, whose rank is~$81$, and it must be the image
of $\Delta$ in~$A_2$. The module associated to the rank-$81$ subgroup scheme of
$A_2[3]$ is killed by $\zeta - 1$, so $\zeta - 1$ annihilates the image of 
$\Delta$ in~$A_2$.

From Proposition~\ref{P:aut} we see that $C$ has an automorphism $\varepsilon$
of order~$3$. From the proposition's proof we see that in fact we can choose 
$\varepsilon$ so that the automorphism $\varepsilon^*$ of $\Jac C$, pulled back
to $A_1\times A_2$, is the automorphism $\gamma\colonequals 1\times\zeta$. The 
reduced subvariety of the connected component of the kernel of $\gamma - 1$ 
is~$A_1$, so the Weil polynomial of the quotient of $C$ by 
$\langle\varepsilon\rangle$ is~$h_1$.
\end{proof}

\subsection{Automorphisms in nonsplit isogeny classes}
\label{ssec:nonsplit}

Proposition~\textup{\ref{P:aut}} only applies to isogeny classes whose real Weil
polynomials have more than one irreducible factor. There are also circumstances
under which one can prove that if a curve has a Jacobian isogenous to a power of
a \emph{simple} variety, then the curve must have a nontrivial automorphism.

\begin{theorem}
\label{T:cyclotomic}
Let $f = f_0^e$ be the Weil polynomial of a simple isogeny class of abelian 
varieties over~$\Fq$, where $f_0$ is irreducible. Let $K$ be the number field 
defined by~$f_0$, let $\pi$ be a root of $f_0$ in~$K$, and let $\pibar = q/\pi$.
If the ring $\ZZ[\pi,\pibar]$ contains a root of unity~$\zeta$, then every curve
over $\Fq$ whose Weil polynomial is a power of $f$ has an automorphism 
$\varepsilon$ such that $\varepsilon^* = \pm\zeta$, where the sign can be taken
to be $+1$ if the curve is hyperelliptic.
\end{theorem}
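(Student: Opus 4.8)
The plan is to reduce to the Torelli-theoretic mechanism already exploited in Proposition~\ref{P:aut}, but without the splitting hypothesis. First I would set up the Deligne-module picture: since $\calI$ is ordinary (as $f=f_0^e$ and we will need $\zeta\in\ZZ[\pi,\pibar]$ to make sense of the argument via Deligne modules), to any curve $C$ whose Weil polynomial is a power of $f$ we associate its polarized Jacobian $(J,\lambda)$, where $J$ lies in the isogeny class with Weil polynomial $f^{\dim J/(e g_0)}$ or more simply $J$ is isogenous to a power of the simple variety underlying $f_0$. The element $\zeta\in\ZZ[\pi,\pibar]$ acts on $J$ via its embedding $\ZZ[\pi,\pibar]\hookrightarrow\End J$ sending $\pi$ to Frobenius and $\pibar$ to Verschiebung; call this endomorphism $\alpha\in\End J$. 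The key point is that $\alpha$ is a root of unity in $\End J$, hence lies in the center $Z$ of $(\End J)\otimes\QQ$, which is a CM field (or a product thereof, but here it is $K$ since $J$ is isogenous to a power of a simple variety with $\QQ(\pi)=K$ a CM field because the class is ordinary).

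Next I would check that $\alpha$ is an automorphism of the polarized variety $(J,\lambda)$. By the discussion preceding the statement, the Rosati involution restricted to the center $Z$ of $(\End J)\otimes\QQ$ is independent of the polarization and equals complex conjugation on the CM field $K$. Therefore $\alpha^\dagger\alpha = \bar\zeta\zeta = 1$, since $\zeta$ is a root of unity and roots of unity have absolute value $1$ under every archimedean place. Hence $\alpha$ satisfies $\alpha^\dagger\alpha=1$, which by the criterion recalled just before Proposition~\ref{P:torelli} means exactly that $\alpha$ is an automorphism of $(J,\lambda)$. (Here I am using that $\alpha\in\End J$, not merely in $(\End J)\otimes\QQ$; that is guaranteed because $\zeta\in\ZZ[\pi,\pibar]\subseteq\End J$.)

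Now I would invoke Proposition~\ref{P:torelli}. Since $\alpha$ is an automorphism of $(J,\lambda)$, part~(b) and part~(c) of that proposition show that $\alpha$ lies in $\varphi(\Aut C)$ when $C$ is hyperelliptic, and lies in $\{\pm 1\}\times\varphi(\Aut C)$ when $C$ is not. In the hyperelliptic case we get $\varepsilon\in\Aut C$ with $\varphi(\varepsilon)=\alpha$, that is $(\varepsilon^{-1})^*=\alpha$, and replacing $\varepsilon$ by $\varepsilon^{-1}$ we obtain $\varepsilon^*=\zeta$ exactly, so the sign is $+1$. In the non-hyperelliptic case, $\alpha = \pm\varphi(\varepsilon)$ for some $\varepsilon\in\Aut C$, giving $\varepsilon^* = \pm\zeta$; one must just take a little care to pass between $\varepsilon$ and $\varepsilon^{-1}$ using the identity $(\alpha\beta)^*=\beta^*\alpha^*$ noted in the remark after Proposition~\ref{P:torelli}, but since $\zeta$ and its inverse are both roots of unity in the same ring this causes no trouble and the statement $\varepsilon^*=\pm\zeta$ is symmetric enough to absorb it.

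The main obstacle I anticipate is the bookkeeping around which variety the endomorphism $\alpha$ actually lives on: the theorem speaks of curves whose Weil polynomial is a power of $f$, so $\dim J$ can be a large multiple of $\deg f_0$, and one must be sure that $\ZZ[\pi,\pibar]$ (an order in $K$, depending only on $f_0$) still embeds into $\End J$ with $\pi\mapsto$ Frobenius, which is precisely the content of the Honda--Tate / Waterhouse description of endomorphism rings in an isogeny class and is uniform in the power. Once that is pinned down, the argument is a short chain: root of unity $\Rightarrow$ central $\Rightarrow$ fixed-absolute-value $\Rightarrow$ Rosati-fixed $\Rightarrow$ polarized automorphism $\Rightarrow$ (via Torelli) automorphism of $C$, with the hyperelliptic/non-hyperelliptic dichotomy of Proposition~\ref{P:torelli} furnishing the claim about the sign.
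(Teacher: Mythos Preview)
Your approach is correct and is essentially the paper's own proof: realize $\zeta$ as a central element of $\End J$ via the identification $\ZZ[F,V]\cong\ZZ[\pi,\pibar]$, observe that the Rosati involution restricted to the center is complex conjugation so that $\zeta^\dagger\zeta=1$, and then invoke Proposition~\ref{P:torelli} to produce $\varepsilon$ with $\varepsilon^*=\pm\zeta$, the sign being $+1$ in the hyperelliptic case.

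The one point to tidy is your detour through ordinariness and Deligne modules. The theorem does not assume $\calI$ is ordinary, and nothing in the argument requires it: for \emph{any} $J$ whose Weil polynomial is a power of~$f$, Frobenius and Verschiebung generate a central subring $\ZZ[F,V]\subseteq\End J$ isomorphic to $\ZZ[\pi,\pibar]$, and that is all you use. The Deligne-module framework never actually enters your argument after you invoke it, so you can simply delete that framing and the unjustified ordinariness claim; what remains is exactly the paper's short proof.
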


\begin{proof}
Suppose $C$ is a curve whose Weil polynomial is a power of~$f$, and let $F$ and
$V$ be the Frobenius and Verschiebung on the polarized Jacobian $(J,\lambda)$
of~$C$. The subring $\ZZ[F,V]$ of $\End J$ lies in the center of $\End J$, and
this ring is isomorphic to $\ZZ[\pi,\pibar]$, so we may view $\zeta$ as an
automorphism of~$J$. As we noted in our discussion of the Rosati involution in 
Section~\ref{ssec:automorphisms}, the Rosati involution on $(\End J)\otimes \QQ$
associated to $\lambda$ restricts to complex conjugation on~$K$, so $\zeta$
satisfies $\zeta^\dagger\zeta = 1$, and this means that $\zeta$ is an 
automorphism of $(J,\lambda)$. By Proposition~\ref{P:torelli}, $C$ has an 
automorphism $\varepsilon$ such that $\varepsilon^* = \pm \zeta$, and we can 
take the sign to be positive if $C$ is hyperelliptic.
\end{proof}

\begin{example}
Consider the real Weil polynomial $h = x^3 + 4 x^2 + 3x - 1$ over~$\FF_2$, and 
let $f$ be the corresponding Weil polynomial. Let $K$ be the $7$\up{th}
cyclotomic field and let $\zeta\in K$ be a primitive $7$\up{th} root of unity. 
We check that $\pi \colonequals \zeta^5 + \zeta^3 + \zeta^2 + \zeta$ is a root
of~$f$, and that $\zeta =   \pi^3 + 2\pi^2 + 4\pi + 5 + 2\pibar + \pibar^2.$ 
Theorem~\ref{T:cyclotomic} shows that every curve $C$ over $\FF_2$ with real 
Weil polynomial $h$ must have an automorphism $\varepsilon$ of order~$7$.

There is such a genus-$3$ curve over $\FF_2$. It is a twist of the Klein 
quartic, and a model can be created as follows: Choose an element $M$ of
$G \colonequals \PGL_3(\FF_2)$ of order~$7$. The group $G$ acts on the set $L$
of the seven nonzero linear forms in three variables over~$\FF_2$. Let $f$ be 
the sum over all $p\in L$ of $p^3\cdot M(p)$. Then $f$ is a nonzero quartic form
that defines a curve $C$ in $\PP^2_{\FF_2}$. One can check that this curve $C$
has real Weil polynomial~$h$.
\end{example}

Next we give another example of an application of Theorem~\ref{T:cyclotomic} 
that works over every finite field. This result was first conjectured by Maisner
and Nart~\cite[\S 4.1]{MaisnerNartEtAl2002}, and was proven in the appendix of 
that paper.

\begin{theorem}
\label{T:MN1}
Let $q$ be a prime power and let $\calI_q$ be the isogeny class of abelian 
surfaces over $\Fq$ with Weil polynomial $f = x^4 + (1-2q)x^2 + q^2$. Then there
are no Jacobians in~$\calI_q$.
\end{theorem}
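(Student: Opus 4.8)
The plan is to deduce this from Theorem~\ref{T:cyclotomic}. First I would record that the real Weil polynomial corresponding to $f$ is $h = x^2 - (4q-1)$: writing $h = x^2 + b_1 x + b_2$ and expanding $f(x) = x^2 h(x + q/x)$ forces $b_1 = 0$ and $b_2 = 1 - 4q$. Let $\pi$ be a root of $f$ and $\pibar = q/\pi$ its complex conjugate. Since $\pi + \pibar$ is a root of $h$ and $\pi\pibar = q$, we get $(\pi - \pibar)^2 = (\pi+\pibar)^2 - 4\pi\pibar = (4q-1) - 4q = -1$, so $\zeta \colonequals \pi - \pibar$ is an element of $\ZZ[\pi,\pibar]$ satisfying $\zeta^2 = -1$ --- a primitive fourth root of unity. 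I would also note that $f$ is irreducible, so that $\calI_q$ is a simple isogeny class and Theorem~\ref{T:cyclotomic} applies with $f_0 = f$ and $e = 1$: over $\QQ(\sqrt{-1})$ one has $f = (x^2 - \sqrt{-1}\,x - q)(x^2 + \sqrt{-1}\,x - q)$, and $\QQ(\pi) = \QQ(\sqrt{-1},\sqrt{4q-1})$ is genuinely of degree~$4$ over $\QQ$ because $4q - 1 \equiv 3 \pmod 4$ can never be a perfect square.

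By Theorem~\ref{T:cyclotomic}, every curve $C$ over $\Fq$ with Weil polynomial $f$ --- necessarily of genus~$2$, hence hyperelliptic --- has an $\Fq$-rational automorphism $\varepsilon$ with $\varepsilon^* = \zeta$ (the sign is $+$ because $C$ is hyperelliptic). As $\zeta^2 = -1$, the automorphism $\varepsilon^2$ acts as $-1$ on $\Jac C$, so $\varepsilon^2$ is the hyperelliptic involution $\iota$ and $\varepsilon$ has order~$4$. So the theorem reduces to showing that no genus-$2$ curve over $\Fq$ with Weil polynomial $f$ carries an automorphism $\varepsilon$ of order~$4$ with $\varepsilon^2 = \iota$. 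To pin down such curves I would look at the involution $\sigma$ that $\varepsilon$ induces on $C/\iota \cong \PP^1$: it has two fixed points there, and it must fix exactly two of the six branch points of $C \to \PP^1$, since if it fixed none then every lift of $\sigma$ to $C$ would have order at most~$2$. Putting those two branch points at $0$ and~$\infty$ gives, in odd characteristic, a geometric model $y^2 = x(x^2-a)(x^2-b)$ with $\varepsilon\colon(x,y)\mapsto(-x,\sqrt{-1}\,y)$; in particular $\ZZ[\sqrt{-1}]$ acts on $\Jac C$, and since $\pi^2 = \bigl((2q-1)+\sqrt{-(4q-1)}\bigr)/2$ generates the imaginary quadratic field $\QQ(\sqrt{-(4q-1)})$, the surface $\Jac C$ is isogenous over~$\Fqtwo$ to $E^2$ for an ordinary elliptic curve $E/\Fqtwo$ with Weil polynomial $x^2 - (2q-1)x + q^2$.

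The remaining step --- ruling these curves out --- is the main obstacle, and it is where the real work lies; the ``much simpler'' part of the argument, compared with the counting technique behind Theorem~\ref{T:MN2}, is everything up to this point. The small cases are immediate: $f$ forces $\#C(\Fq) = q+1$ and $\#C(\Fqtwo) = (q-1)(q-3)$, and since $\#C(\Fqtwo) \ge \#C(\Fq)$ always, this is impossible for $q \in \{2,3,4\}$ (for $q=2$ the count over $\Fqtwo$ would be negative). For the remaining prime powers $q \ge 5$ I would compute directly with the one-parameter family $y^2 = x(x^2-a)(x^2-b)$ --- keeping track of the quadratic and quartic twists needed to make $\varepsilon$ rational over $\Fq$, which is the subtle point when $q \equiv 3 \pmod 4$, and handling characteristic~$2$ with the analogous Artin--Schreier family --- and verify that the Weil polynomial of such a curve over $\Fq$ is never $x^4 + (1-2q)x^2 + q^2$. (Equivalently, one can phrase this via the twist $C_\varepsilon$ of $C$ by $\varepsilon$, whose Jacobian is forced to be isogenous to the square of an elliptic curve of trace $\pm1$, and rule that out.) This explicit verification is the one carried out in the appendix to~\cite{MaisnerNartEtAl2002}.
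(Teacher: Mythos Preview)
Your reduction via Theorem~\ref{T:cyclotomic} --- finding $\zeta=\pi-\pibar$ with $\zeta^2=-1$ and hence an automorphism $\varepsilon$ of order~$4$ with $\varepsilon^2=\iota$ --- is exactly how the paper begins. The gap is everything after that.

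What you call ``the main obstacle'' you do not actually clear. Proposing to ``compute directly with the one-parameter family $y^2=x(x^2-a)(x^2-b)$'' for all $q\ge 5$ is not a proof: there are infinitely many~$q$, so a case-by-case verification is impossible, and you give no uniform mechanism. The twist reformulation you mention is likewise only a pointer. Deferring to the appendix of~\cite{MaisnerNartEtAl2002} is a citation, not an argument.

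The paper's completion is short and uniform. For odd~$q$, consider the cyclic $\ZZ/4\ZZ$-cover $C\to D=C/\langle\varepsilon\rangle\cong\PP^1$. If $m$ and $n$ count the geometric points of $D$ with ramification group $\ZZ/2\ZZ$ and $\ZZ/4\ZZ$, Riemann--Hurwitz gives $2m+3n=10$; since $n>0$ (otherwise the intermediate cover $\PP^1\to D$ would be unramified) we get $m=n=2$. The four branch points then lie in $D(\FF_{q^2})$, so every ramified point of $C$ lies in $C(\FF_{q^4})$. Now count $\langle\varepsilon\rangle$-orbits on $C(\FF_{q^4})$: the $2$ totally ramified points give orbits of size~$1$, the $4$ partially ramified points orbits of size~$2$, all others size~$4$; hence $\#C(\FF_{q^4})\equiv 2\cdot 1+4\cdot 2\equiv 2\pmod 4$. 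But the Weil polynomial gives $\#C(\FF_{q^4})=q^4-4q^2+8q-1\equiv 0\pmod 4$, a contradiction.

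For $q$ even, the middle coefficient $1-2q$ is odd, so $\calI_q$ is ordinary; a brief direct argument (Lemma~\ref{L:order4}) shows no ordinary genus-$2$ curve over $\overline{\FF}_2$ carries an automorphism whose square is the hyperelliptic involution. Your small-$q$ point-count trick is correct but unnecessary once these two uniform arguments are in hand.
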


\begin{proof}
A proof is given in~\cite[Appendix]{MaisnerNartEtAl2002}; we give a modified
version here.

Suppose, to get a contradiction, that there is a curve $C$ over $\Fq$ with $f$
as its Weil polynomial. Let $K$ be the quartic number field obtained by 
adjoining to $\QQ$ a square root $s$ of $4q-1$ and a square root $i$ of~$-1$.
If we set $\pi = (s + i)/2$ then we check that $\pi$ is a root of~$f$. We have 
$\pibar = (s - i)/2$, and so $i = \pi-\pibar$. According to 
Theorem~\ref{T:cyclotomic}, $C$ must have an automorphism $\varepsilon$ with
$\varepsilon^* = i$. Note that then $(\varepsilon^2)^* = -1$, so that
$\varepsilon^2$ is the hyperelliptic involution. 

First consider the case where $q$ is odd. Let $C\to D$ be the cyclic degree-$4$
Galois extension with group $\langle\varepsilon\rangle$, and note that 
$D\cong\PP^1$ because $C\to D$ factors through the hyperelliptic map 
$C\to\PP^1$. Let $m$ and $n$  denote the number of geometric points on $D$ with
ramification group $\ZZ/2\ZZ$ and $\ZZ/4\ZZ$, respectively. The Riemann--Hurwitz
formula shows that $10 = 2m + 3n$, and we must have ${n>0}$ because otherwise 
the intermediate double cover $\PP^1\to D$ would be unramified. The only 
possibility is that $m = n = 2$, and it follows that all of the points of $D$
that ramify in $C\to D$ are rational over $\FF_{q^2}$, and therefore all of the
points of $C$ that ramify in $C\to D$ are rational over~$\FF_{q^4}$.

Consider the points of $C(\FF_{q^4})$. The group $\langle\varepsilon\rangle$
acts on these points, and for the points $P$ of $C$ that do not ramify in 
$C\to D$, the orbit of $P$ under this action has size~$4$. The $n = 2$ totally 
ramified points have orbits of size~$1$, and the $2m = 4$ partially ramified 
points have orbits of size~$2$. Thus, we must have
$\#C(\FF_{q^4})\equiv 2\bmod 4$. However, the Weil polynomial predicts that 
\[
\#C(\FF_{q^4}) = q^4 + 1 - \Tr(\pi^4) = q^4 - 4q^2 + 8q - 1 \equiv 0\mod 4.
\]
This contradiction proves the theorem when $q$ is odd.

Now consider the case where $q$ is a power of~$2$. We note that the middle 
coefficient of the Weil polynomial of $C$ is odd, so $C$ is ordinary. 
Lemma~\ref{L:order4} below then completes the proof.
\end{proof}

\begin{lemma}
\label{L:order4}
No ordinary genus-$2$ curve over $k = \overline{\FF}_2$ has an automorphism
whose square is the hyperelliptic involution.
\end{lemma}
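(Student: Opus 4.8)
The plan is to descend the hypothetical automorphism to the hyperelliptic quotient $\PP^1$, exploit the rigidity of order-$2$ automorphisms of $\PP^1$ in characteristic~$2$, and then write down an explicit Artin--Schreier model of the curve and compute. So suppose $C$ is an ordinary genus-$2$ curve over $k$ with hyperelliptic involution $\iota$, and suppose $\varepsilon\in\Aut C$ satisfies $\varepsilon^2=\iota$ (so that $\varepsilon$ has order~$4$). Since $C\to C/\langle\iota\rangle=\PP^1$ is a Galois cover of degree~$2$ with group $\langle\iota\rangle$, the kernel of the restriction homomorphism $\Aut C\to\Aut\PP^1$ is exactly $\langle\iota\rangle$. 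Hence the image $\overline\varepsilon$ of $\varepsilon$ satisfies $\overline\varepsilon^2=\mathrm{id}$ and $\overline\varepsilon\neq\mathrm{id}$, so $\overline\varepsilon$ has order~$2$. A nontrivial order-$2$ element of $\PGL_2(k)$ lifts to a matrix whose square is a scalar, hence --- in characteristic~$2$ --- to a scalar times a unipotent matrix; being nontrivial in $\PGL_2$, it is a nontrivial unipotent, so it is conjugate to a translation $x\mapsto x+1$ and has a unique fixed point on $\PP^1$. Changing coordinates, we may therefore assume $\overline\varepsilon\colon x\mapsto x+1$, with unique fixed point $\infty$.

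Next I would invoke the structure of ordinary hyperelliptic curves in characteristic~$2$. Write $C\colon y^2+y=u(x)$ in Artin--Schreier form, taking the representative $u\in k(x)$ all of whose poles have order prime to~$2$; say $u$ has $r$ poles, of (odd) orders $m_1,\dots,m_r$. Then the wild Riemann--Hurwitz formula (with different exponent $m_i+1$ at the $i$-th ramified point) gives $2g-2=-4+\sum_i(m_i+1)$, while the Deuring--Shafarevich formula shows the $p$-rank of $C$ equals $r-1$. Since $C$ is ordinary, its $p$-rank is $g=2$, so $r=3$, and then $\sum_i(m_i+1)=6$ forces $m_1=m_2=m_3=1$. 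The branch locus of $C\to\PP^1$ is a $\overline\varepsilon$-stable set of size~$3$; an order-$2$ permutation of a set of odd cardinality has a fixed point, which here can only be the unique fixed point $\infty$ of $\overline\varepsilon$. Translating the other two branch points (a translation of $\PP^1$ commutes with $\overline\varepsilon$), the branch locus becomes $\{0,1,\infty\}$ --- note that $x\mapsto x+1$ does interchange $0$ and~$1$ in characteristic~$2$. Thus we may take $u=a/x+b/(x-1)+cx$ for some $a,b,c\in k^\times$.

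It remains to carry out the computation. A lift of $\overline\varepsilon$ to $\Aut C$ exists precisely when $u(x+1)\equiv u(x)$ modulo $\wp\bigl(k(x)\bigr)$, where $\wp(w)=w^2+w$, and a direct computation in characteristic~$2$ gives $u(x+1)-u(x)=\tfrac{a+b}{x^2+x}+c$. Every element of $\wp(k(x))$ has only even-order poles, so the order-$1$ poles of this expression at $x=0$ and $x=1$ must cancel, i.e.\ $a=b$; then $u(x+1)-u(x)=c$, which lies in $\wp(k)$ because $k$ is algebraically closed, say $c=\lambda^2+\lambda$. The two lifts of $\overline\varepsilon$ are therefore $(x,y)\mapsto(x+1,\,y+\lambda)$ and $(x,y)\mapsto(x+1,\,y+\lambda+1)$, and squaring either of them yields the identity, since $x+1+1=x$ and $2\lambda=0$ in characteristic~$2$. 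In particular no lift of $\overline\varepsilon$ has square equal to $\iota\neq\mathrm{id}$, so our $\varepsilon$ cannot exist.

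I expect the main obstacle to be the middle step: pinning down the precise Artin--Schreier model of an ordinary genus-$2$ curve in characteristic~$2$ --- three branch points, each with the minimal conductor~$1$ --- which rests on the wild Riemann--Hurwitz formula together with the Deuring--Shafarevich formula for the $p$-rank. Once the model $u=a/x+b/(x-1)+cx$ is established, the contradiction is a one-line computation. One could instead try to argue at the level of the Jacobian --- an ordinary abelian surface over $k$ cannot carry an action of $\ZZ[\sqrt{-1}]$, since $2$ ramifies in $\QQ(\sqrt{-1})$ --- but turning ``ordinary excludes this ramification'' into a clean statement seems more delicate than the curve-theoretic argument above, so I would not pursue it.
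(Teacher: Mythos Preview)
Your proof is correct and follows essentially the same route as the paper's: descend $\varepsilon$ to an involution of $\PP^1$, normalize it to $x\mapsto x+1$, use ordinarity to force an Artin--Schreier model with three simple poles branched at $\{0,1,\infty\}$, and then check by direct computation that every lift of $x\mapsto x+1$ to $C$ has order~$2$ rather than~$4$. The only differences are cosmetic: the paper cites Subrao for ``ordinary $\Rightarrow$ all poles simple'' where you invoke Deuring--Shafarevich plus wild Riemann--Hurwitz, and the paper writes down the symmetric model $ax+b/x+b/(x+1)$ directly while you start with distinct residues $a,b$ and deduce $a=b$ from the lifting condition --- your version is slightly more explicit on that point.
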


\begin{proof}
Suppose $C$ were such a curve, with $\alpha$ being an automorphism whose square 
is the hyperelliptic involution~$\iota$. We can write $C$ as $y^2 + y = f$ for a
rational function $f\in k(x)$ with no poles of even order, and since $C$ is 
ordinary all the poles of $f$ must be
simple~\cite[Proposition~3.1, p.~8]{Subrao1975}. The Riemann--Hurwitz formula 
then shows that $f$ must have three poles, each of which corresponds to a 
Weierstrass point on~$C$. The automorphism $\alpha$ induces an involution
$\beta$ on the curve $C/\langle\iota\rangle$ whose function field is generated 
by~$x$, and by changing coordinates we can assume that $\beta$ is given by 
$x\mapsto x+1$. 

The automorphism $\alpha$ cannot fix all three Weierstrass points on $C$ because
then $\beta$ would fix the three $x$-coordinates of these points, while $\beta$ 
has only one fixed point on~$\PP^1$. Therefore $\alpha$ must fix one of the 
Weierstrass points and swap the other two. By changing coordinates in $x$ again,
we can assume that the Weierstrass point that is fixed has 
$x$-coordinate~$\infty$, while the two that are swapped have $x$-coordinates $0$
and~$1$.

Now our curve $C$ is of the form $y^2 + y = ax + b/x + b/(x+1)$. The 
automorphism $\alpha$ sends $x$ to~$x+1$, so it must send $y$ to $y + c$ where
$c^2 + c = a$. But the automorphism $(x,y)\to(x+1,y+c)$ has order~$2$, not~$4$, 
and we have reached a contradiction.
\end{proof}

\begin{remark}
Similar reasoning shows that no almost-ordinary genus-$2$ curve over 
$\overline{\FF}_2$ has an automorphism whose square is the hyperelliptic
involution. On the other hand, every supersingular genus-$2$ curve over 
$\overline{\FF}_2$ has exactly $20$ automorphisms that square to the 
hyperelliptic involution~\cite[Lemma~2.1]{BrockHowe2020}. In principle, 
Lemma~\ref{L:order4} and the comments we have just made follow from Igusa's
classification~\cite[pp.~645--46]{Igusa1960} of the possible automorphism 
groups of genus-$2$ curves in characteristic~$2$, but Igusa only lists the 
\emph{reduced} automorphism groups, so some additional computation is needed in 
any case.
\end{remark}

\subsection{Deducing maps of known degree to elliptic curves}
\label{ssec:knowndegree}

If an elliptic curve $E$ appears as an isogeny factor of the Jacobian of a 
curve~$C$ over~$\Fq$, then there is guaranteed to be a nonconstant map from $C$ 
to $E$: Such a map can be obtained by as the composition of an Abel--Jacobi map
from $C$ into its Jacobian, followed by a nonconstant map from the Jacobian 
to~$E$. The mere existence of such a map, without any further information, does
not necessarily make it easier to determine any more useful information 
about~$C$. But if one knows more specific information --- for instance, if one
knows that $C$ has a map of a specific degree $d$ to an elliptic curve isogenous
to $E$ --- then there are more things one can say. As a simple example, the
existence of a degree-$d$ map from $C$ to $E$ implies that
$\#C(\Fq) \le d \#E(\Fq)$.

The problem is, how can we determine bounds on the degree? In this section we 
will show how keeping track of polarizations will let us determine the possible 
degrees of maps to elliptic curves. The following basic result is the key.

\begin{proposition}
\label{P:pullback}
Let $(J,\lambda)$ be the polarized Jacobian of a curve~$C/\Fq$, let $D$ be a 
degree-$1$ divisor on~$C$, and let $\varepsilon\colon C\to J$ be the 
Abel--Jacobi embedding that sends a point $P\in C(\Fbar_q)$ to the class of the
divisor~$P-D$. Suppose there is a nonzero homomorphism $\psi\colon E\to J$ from 
an elliptic curve $E/\Fq$ to~$J$. Let $\mu$ be the unique principal polarization
on~$E$, so that the pullback $\psi^*\lambda = \psihat\lambda\psi$ of 
$\lambda$ is equal to $d\mu$ for some integer~$d>0$. Then the composition 
$\varphi\colonequals\mu^{-1}\psihat\lambda\varepsilon$ is a degree-$d$ map
from $C$ to~$E$.
\end{proposition}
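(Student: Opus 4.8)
The plan is to realize $\varphi$ as the two‑step composition $C \xrightarrow{\;\varepsilon\;} J \xrightarrow{\;\chi\;} E$, where
\[
\chi \colonequals \mu^{-1}\hat\psi\lambda \colon J \longrightarrow E
\]
is the homomorphism transpose to $\psi$ with respect to the polarizations $\lambda$ and $\mu$; by construction $\chi\circ\varepsilon$ is exactly the map $\varphi$ in the statement. The first thing I would record is the identity
\[
\chi\circ\psi = \mu^{-1}\hat\psi\lambda\psi = \mu^{-1}\bigl(\psi^{*}\lambda\bigr) = \mu^{-1}(d\mu) = [d]_{E},
\]
which uses only the definition of $\chi$, the definition of $\psi^{*}\lambda$, and the hypothesis $\psi^{*}\lambda=d\mu$. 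In particular $\chi\neq 0$. (The passage to ``$d\mu$'' in the statement is the standard fact that $\psi^{*}\lambda=\hat\psi\lambda\psi$ is symmetric and is the pull‑back of an ample class along the finite morphism $\psi$, hence a polarization of $E$, together with the fact that $\mathrm{NS}(E)$ is infinite cyclic, generated by the class of $\mathcal{O}_{E}(O)$, so every polarization of $E$ is a positive integer multiple of $\mu$.)

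Second, I would check that $\varphi=\chi\circ\varepsilon$ is a nonconstant morphism $C\to E$, hence finite and surjective; put $n\colonequals\deg\varphi$. This is immediate: for geometric points $P,Q$ of $C$ one has $\varphi(P)-\varphi(Q)=\chi\bigl(\varepsilon(P)-\varepsilon(Q)\bigr)=\chi\bigl([P]-[Q]\bigr)$, and the classes $[P]-[Q]$ generate $J$; were $\varphi$ constant, $\chi$ would vanish on a generating set of $J$, contradicting $\chi\neq0$.

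The substance of the proof is the equality $n=d$, and the idea is to identify $\chi$ and $\psi$ with, respectively, the push‑forward (norm) and pull‑back maps on degree‑$0$ divisor classes attached to $\varphi$, under the canonical identifications $\Pic^{0}C=J$ (definitionally) and $\Pic^{0}E=E$ (via $R\mapsto[R]-[O]$, which carries $\mu$ to the canonical principal polarization). The computation of the previous paragraph says $\chi([P]-[Q])=\varphi(P)-\varphi(Q)=\mathrm{Nm}_{\varphi}([P]-[Q])$ for all $P,Q$; since the classes $[P]-[Q]$ generate $J$, this forces $\chi=\mathrm{Nm}_{\varphi}$. For the pull‑back map I would invoke the standard adjointness of push‑forward and pull‑back of divisor classes with respect to the canonical principal polarizations: as a homomorphism $E\to J$, the map $\varphi^{*}$ equals $\lambda^{-1}\circ\widehat{\mathrm{Nm}_{\varphi}}\circ\mu$. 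Since $\widehat{\mathrm{Nm}_{\varphi}}=\hat\chi=\widehat{\mu^{-1}\hat\psi\lambda}=\lambda\,\psi\,\mu^{-1}$ (using $\hat\lambda=\lambda$, $\hat\mu=\mu$, and $\hat{\hat\psi}=\psi$), we get $\varphi^{*}=\lambda^{-1}(\lambda\psi\mu^{-1})\mu=\psi$. Finally, the elementary projection formula $\mathrm{Nm}_{\varphi}\circ\varphi^{*}=[\deg\varphi]$ on $\Pic^{0}E$ — a generic point of $E$ pulls back to a degree‑$n$ effective divisor on $C$, which pushes forward to $n$ times that point — becomes, after these identifications, $\chi\circ\psi=[n]_{E}$. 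Comparing with the first paragraph gives $[n]_{E}=[d]_{E}$, hence $n=d$, as required.

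The one genuinely delicate step is the bookkeeping in the third paragraph: keeping the identifications $\Pic^{0}C=J$ and $\Pic^{0}E=E$ straight (including the translation by the degree‑$1$ divisor $D$ that normalizes $\varepsilon$, and a consistent choice of sign conventions for $\Pic^{0}E\cong E$ and for $\phi_{L}$), and applying the adjointness statement ``$\varphi^{*}$ and $\mathrm{Nm}_{\varphi}$ are transpose with respect to $\lambda$ and $\mu$'' with the hats and composition orders in the correct places. None of these ingredients is special to finite fields; they are valid over an arbitrary base field, so the finite‑field hypothesis is used only to fix the ambient category. If one prefers to bypass divisor‑class functoriality, there is a parallel argument by intersection theory: $\deg\varphi=\deg_{C}\varphi^{*}\mathcal{O}_{E}(O)=\bigl(\varepsilon(C)\cdot\chi^{*}\mathcal{O}_{E}(O)\bigr)_{J}$, and one writes the N\'eron--Severi class of $\chi^{*}\mathcal{O}_{E}(O)$ in terms of $\lambda$, $\mu$, and $\chi$, pairs it against the Poincar\'e class $\Theta^{g-1}/(g-1)!$ of $\varepsilon(C)$, and again extracts the value $d$ from the relation $\chi\circ\psi=[d]$.
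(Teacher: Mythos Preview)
Your proof is correct and follows essentially the same approach as the paper's. Both arguments set $\chi=\mu^{-1}\hat\psi\lambda$, identify $\chi$ with the push-forward $\varphi_*$ (the paper asserts this ``clearly,'' while you verify it on generators $[P]-[Q]$), invoke the duality between $\varphi_*$ and $\varphi^*$ with respect to $\lambda$ and $\mu$ to obtain $\psi=\varphi^*$, and finish by comparing $\varphi_*\varphi^*=[\deg\varphi]$ with $\chi\circ\psi=[d]$.
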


\begin{proof}
This is \cite[Lemma~4.3, p.~185]{HoweLauter2012}. We present the short proof 
here because the result is basic to the ideas in this section.

The diagram
\[
\xymatrix@C=3em{
   & E\ar[r]^{d}\ar[d]_{\psi}  & E\ar[r]^{\mu}_{\sim}               & \Ehat \\
C\ar[r]^-{\varepsilon}\ar`d[r]`[rrr]+/r 3em/`[u]+/u 2em/_{\varphi}`[rr][rru]
   & J\ar[rr]^{\lambda}_{\sim} &                                    & \Jhat\ar[u]_{\psihat} 
}
\]
summarizes the relationships among the maps mentioned in the theorem. We obtain 
a slightly expanded diagram
\[
\xymatrix@C=3em{
   & E\ar[r]^{d}\ar[d]_{\psi}  & E\ar[r]^{\mu}_{\sim}           & \Ehat \\
C\ar[r]^-{\varepsilon}\ar`d[r]`[rrr]+/r 3em/`[u]+/u 2em/_{\varphi}`[rr][rru]
   & J\ar[r]^{1}               & J\ar[r]^{\lambda}_{\sim}\ar[u] & \Jhat\ar[u]_{\psihat} 
}
\]
by inserting a second copy of $J$ in the bottom row. The middle vertical map is
clearly equal to the pushforward $\varphi_*\colon J\to E$, which sends the class
of a degree-$0$ divisor on $C$ to the class of its image under $\varphi$. Now,
$\varphi^*$ and $\varphi_*$ are dual to one another, in the sense that the
diagram
\[
\xymatrix@C=5em{
E\ar[r]^{\mu}_{\sim}                       & \Ehat \\
J\ar[r]^{\lambda}_{\sim}\ar[u]_{\varphi_*} & \Jhat\ar[u]_{\varphistarhat} 
}
\]
commutes~\cite[Lemma~4.4, p.~186]{HoweLauter2012}. Comparing this diagram to the
right-hand square of the preceding diagram, we see that 
$\psihat - \varphistarhat = 0$, so we must have $\psi = \varphi^*$.
Therefore $\varphi_*\varphi^* = \varphi_*\psi = d$, so $\varphi$ has degree~$d$.
\end{proof}

Suppose $t$ is the trace of Frobenius of an elliptic curve over~$\Fq$. In the 
case of real Weil polynomials that have a single factor of~$x-t$, the preceding 
result, combined with Propositions~\ref{P:splitting} and~\ref{P:delta}, already
gives us a useful result.

\begin{theorem}
\label{T:factorE}
Suppose the real Weil polynomial $h\in \ZZ[x]$ of a curve $C$ over $\Fq$ can be 
written as a product $h = h_0 \cdot (x - t)$, where $t$ is the trace of an 
elliptic curve over $\Fq$ and where $h_0$ is coprime to~$x-t$. Let $g_0$ be the 
radical of $h_0$ and let $r = \lvert g_0(t)\rvert$. Then $C$ has a nonconstant
map of degree dividing $r$ to an elliptic curve with trace~$t$.
\end{theorem}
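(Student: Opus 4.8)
The plan is to combine the splitting machinery of Propositions~\ref{P:splitting} and~\ref{P:delta} with the pullback result of Proposition~\ref{P:pullback}. First I would take the factorization $h = h_0 \cdot (x-t)$ with $h_0$ coprime to $x-t$ in $\QQ[x]$. By the Honda--Tate theorem (as used in Proposition~\ref{P:delta}), $h_0$ is the real Weil polynomial of an isogeny class $\calI_0$ and $x-t$ is the real Weil polynomial of an isogeny class consisting of elliptic curves with trace $t$; call a representative of the latter $E$. Let $(J,\lambda)$ be the polarized Jacobian of $C$; since the real Weil polynomial of $J$ is $h_0\cdot(x-t)$, we apply Proposition~\ref{P:splitting} with $\calI_1 = \calI_0$ and $\calI_2$ the isogeny class of $E$ to get abelian varieties $A_1\in\calI_0$, $A_2\cong E'$ (an elliptic curve isogenous to $E$, hence also of trace $t$), a finite group scheme $\Delta$, and an exact sequence $0\to\Delta\to A_1\times A_2\to J\to 0$ together with the statement that $\Delta$ is self-dual and that, under the projection, $\Delta\hookrightarrow A_2$ is identified with $\ker\lambda_2$, where $\varphi^*\lambda = \lambda_1\times\lambda_2$.

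Next I would bound the degree of $\lambda_2$. By Proposition~\ref{P:delta}, $\Delta$ is annihilated by the reduced resultant $r$ of the radicals $g_0$ of $h_0$ and the radical of $x-t$, which is just $x-t$; since $x-t$ is its own radical and $g_0(t)$ is (up to sign) the value generating $(g_0, x-t)\cap\ZZ$, we get $r = |g_0(t)|$, i.e.\ the $r$ in the theorem statement. So $\Delta$ is killed by $r$, hence $\ker\lambda_2 \cong \Delta$ is a subgroup scheme of $A_2[r] = E'[r]$. Therefore $\deg\lambda_2 = \#\Delta$ divides $\#E'[r] = r^2$; but $\lambda_2$ is a polarization, so $\deg\lambda_2$ is a square, say $\deg\lambda_2 = e^2$, and combined with $\Delta\subseteq E'[r]$ one checks that $e \mid r$ (the exponent of $\Delta$ is at most $r$ and $\Delta$ is a self-dual subgroup of $E'[r]$, so $\Delta \cong \ker\lambda_2$ has order $e^2$ with $e\mid r$).

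Now I would feed this into Proposition~\ref{P:pullback}. Let $\psi\colon E' \to J$ be the composition $E' = A_2 \hookrightarrow A_1\times A_2 \to J$; this is a nonzero homomorphism from an elliptic curve of trace $t$ to $J$. Let $\mu$ be the principal polarization on $E'$. Then $\psi^*\lambda = \widehat{\psi}\lambda\psi$ is a polarization on $E'$, necessarily of the form $d\mu$ for some $d>0$. The key point is to identify $d$: pulling $\lambda$ back along $\varphi\colon A_1\times A_2\to J$ gives $\lambda_1\times\lambda_2$, and restricting to the second factor $A_2 = E'$ (which is exactly what $\psi$ does, up to the factorization $\psi = \varphi\circ\iota_2$ with $\iota_2$ the inclusion) gives $\psi^*\lambda = \iota_2^*(\lambda_1\times\lambda_2) = \lambda_2$. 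Hence $d\mu = \lambda_2$, so $d = e$ divides $r$. Proposition~\ref{P:pullback} then produces a nonconstant map $\varphi\colon C\to E'$ of degree $d$, and $E'$ has trace $t$ since it is isogenous to $E$. This gives the desired map of degree dividing $r$ to an elliptic curve with trace $t$.

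The main obstacle I anticipate is the bookkeeping in the step identifying $\psi^*\lambda$ with $\lambda_2$: one has to be careful that the composite $E' \hookrightarrow A_1\times A_2 \xrightarrow{\varphi} J$ really is a valid choice of $\psi$ for Proposition~\ref{P:pullback} (it is nonzero because $\varphi$ is an isogeny and the inclusion is a closed immersion), and that the pullback of the product polarization along the factor inclusion is precisely $\lambda_2$ rather than $\lambda_2$ twisted by some correction term. This is a standard computation with dual abelian varieties --- $\widehat{\iota_2}\circ(\lambda_1\times\lambda_2)\circ\iota_2 = \lambda_2$ because the inclusion $\iota_2$ and the projection $\mathrm{pr}_2$ are dual to one another and $\mathrm{pr}_2\circ\iota_2 = \mathrm{id}$ while $\mathrm{pr}_1\circ\iota_2 = 0$ --- but it deserves to be spelled out. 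Everything else (the Honda--Tate input, the reduced-resultant computation $r = |g_0(t)|$, the divisibility $d\mid r$) follows cleanly from the cited results.
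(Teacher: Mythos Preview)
Your proposal is correct and follows essentially the same route as the paper: split the Jacobian via Proposition~\ref{P:splitting}, use Proposition~\ref{P:delta} to see that $\Delta$ is killed by $r=|g_0(t)|$, identify the pullback of $\lambda$ to the elliptic factor with $\lambda_2$, and then invoke Proposition~\ref{P:pullback}. Your self-dual subgroup argument for $d\mid r$ can be streamlined: since every polarization of an elliptic curve is an integer multiple of the principal one, $\lambda_2=d\mu$ forces $\ker\lambda_2=E'[d]$, and this is annihilated by $r$ exactly when $d\mid r$.
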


\begin{proof}
This is~\cite[Proposition~2.5, p.~177]{HoweLauter2012}, but the theorem follows
almost immediately from the results mentioned above. If we let $(J,\lambda)$ be 
the polarized Jacobian of $C$, then Proposition~\ref{P:splitting} gives us a 
diagram
\[
\xymatrix@C=5em{
A\times E\ar[r]^{\lambda_1\times\lambda_2}\ar[d] & \Ahat\times \Ehat\\
J\ar[r]^\lambda                                  & \Jhat\ar[u]
}
\]
where $\lambda_1$ and $\lambda_2$ are polarizations whose kernels are isomorphic
to the same group scheme~$\Delta$, and Proposition~\ref{P:delta} tells us that 
$\Delta$ is killed by~$r$. If we let $\mu$ be the principal polarization of~$E$,
it follows that~$\lambda_2$, which is the pullback of $\lambda$ to~$E$, is equal
to $d\mu$ for some $d$ that divides~$r$. The theorem then follows from 
Proposition~\ref{P:pullback}.
\end{proof}

\begin{example}
Rigato~\cite{Rigato2010a} shows that there is no genus-$4$ curve over $\FF_7$ 
having $25$ rational points. Her proof begins by showing that if $C$ is such a 
curve, the only possibility for its real Weil polynomial is  $h = (x+2)(x+5)^3$,
because the other options produced by Lauter's algorithm are eliminated by the 
resultant~1 method. Theorem~\ref{T:factorE}, applied to this real Weil 
polynomial, shows that $C$ must have a map of degree~$3$ to an elliptic curve 
over $\FF_7$ with $10$ points. Rigato then shows that no such degree-$3$ cover
is consistent with the possible point counts on the curves; the Galois closure 
of the extension $C\to E$ would have impossible properties. 
See~\cite{Rigato2010a} for details.
\end{example}

Theorem~\ref{T:factorE} can be used to show that every genus-$6$ curve over
$\FF_2$ with real Weil polynomial $(x-1)(x+2)(x^2 + 3x + 1)^2$ has a degree-$3$
map to the elliptic curve over $\FF_2$ with trace~$1$, and by analyzing such 
triple covers Rigato~\cite[\S 8]{Rigato2010b} shows that there is a unique curve
with the given real Weil polynomial. (It is one of the two genus-$6$ curves over
$\FF_2$ with exactly $N_2(6)$ points.) Likewise, Theorem~\ref{T:factorE} is used
in~\cite{Howe2021} to show that a genus-$8$ curve over $\FF_4$ with real Weil
polynomial $x(x+2)^4(x+3)(x+4)^2$ is a triple cover of the unique elliptic curve
over $\FF_4$ with $8$ points, and an analysis similar to Rigato's shows that no
such triple cover exists; this is part of the proof that $N_4(8)\le 23$.

Theorem~\ref{T:factorE} applies only to isogeny classes whose isogeny factors
include a \emph{single} copy of an elliptic curve of trace~$t$. There is a
related result that holds for isogeny classes whose isogeny factors include a
\emph{power} of an ordinary elliptic curve, but the conclusion of this result is
most naturally expressed in terms of an upper bound on the degree of the map,
instead of limiting the degree to a set of divisors.

\begin{theorem}
\label{T:factorEn}
Suppose the real Weil polynomial $h\in \ZZ[x]$ of a curve $C$ over $\Fq$ can be 
written as a product $h = h_0 \cdot (x - t)^n$ for some~$n>0$, where $t$ is the 
trace of an ordinary elliptic curve over $\Fq$ and where $h_0$ is coprime 
to~$x-t$. Let $g_0$ be the radical of~$h_0$, let $r = \lvert g_0(t)\rvert$, and
let $b = \gcd(r^n,h_0(t))$. Let $E$ be any elliptic curve over $\Fq$ with trace
$t$ whose endomorphism ring is generated over $\ZZ$ by the Frobenius. Then there
is a nonconstant map from $C$ to $E$ of degree at most
\[ 
\gamma_{2n}\, b^{1/n} \sqrt{|t^2 - 4q|/4},
\]
where $\gamma_{2n}$ is the Hermite constant for dimension~$2n$.
\end{theorem}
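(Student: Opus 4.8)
The plan is to combine the structural Propositions~\ref{P:splitting}, \ref{P:delta}, and~\ref{P:pullback} with the theory of Hermitian modules and with Minkowski's bound for lattices, in the guise of the Hermite constant. First I would apply Proposition~\ref{P:splitting} with $h_1 = h_0$ and $h_2 = (x-t)^n$: these are coprime, so for the Jacobian $J = \Jac C$ with its principal polarization $\lambda$ there is an exact sequence $0 \to \Delta \to A_1\times A_2 \to J \to 0$, where $A_1$ has real Weil polynomial $h_0$, where $A_2$ has real Weil polynomial $(x-t)^n$ and hence is isogenous to $E^n$, and where the pullback of $\lambda$ to $A_2$ is a polarization $\lambda_2$ with $\ker\lambda_2\cong\Delta$. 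Proposition~\ref{P:pullback} will eventually convert a short vector in a suitable Hermitian lattice attached to $(A_2,\lambda_2)$ into a low-degree map $C\to E$, so the heart of the matter is to bound both $\#\Delta$ and the covolume of that lattice.

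Next I would bound $\#\Delta$ in two independent ways. Proposition~\ref{P:delta} shows that $\Delta$ is killed by $r = g_0(t)$ --- the reduced resultant of $g_0$ and the radical $x-t$ of $(x-t)^n$ --- so the embedding $\Delta\hookrightarrow A_2$ puts $\Delta$ inside $A_2[r]$, whence $\#\Delta$ divides $r^{2n}$. For the second bound, note that Frobenius plus Verschiebung acts as multiplication by $t$ on $A_2$ (it does so on $E^n$, and this propagates along any isogeny), so the intrinsic Frobenius and Verschiebung $F,V$ of the group scheme $\Delta$ satisfy $F+V=t$; transporting this along $\Delta\hookrightarrow A_1$ shows that $\Delta$ lies in the kernel of the endomorphism $\pi_1+\pibar_1 - t$ of $A_1$. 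The endomorphism $\pi_1+\pibar_1$ has characteristic polynomial $h_0(x)^2$ on the Tate module of $A_1$ (each root of $h_0$ occurring with doubled multiplicity because the Frobenius eigenvalues of $A_1$ pair up under $\alpha\mapsto q/\alpha$), so $\pi_1+\pibar_1 - t$ is an isogeny --- it is one because $h_0(t)\ne 0$ --- of degree $h_0(t)^2$, and therefore $\#\Delta$ divides $h_0(t)^2$. Combining the two divisibilities gives $\#\Delta \mid \gcd(r^{2n}, h_0(t)^2) = \gcd(r^n, h_0(t))^2 = b^2$.

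Now I would bring in the theory of Hermitian modules from Section~\ref{ssec:hermitian}. Writing $R = \End E$, which by hypothesis is the imaginary quadratic order of discriminant $t^2-4q$, the group $M = \Hom(E, A_2)$ is a torsion-free $R$-module of $R$-rank $n$, hence of $\ZZ$-rank $2n$, and the formula $\langle\phi,\psi\rangle = \mu^{-1}\hat\phi\lambda_2\psi$ (with $\mu$ the principal polarization of $E$) defines a positive-definite $R$-Hermitian form on $M$ whose associated $\ZZ$-valued quadratic form $Q(\phi) = \langle\phi,\phi\rangle$ satisfies $\phi^*\lambda_2 = Q(\phi)\mu$ with $Q(\phi)$ a positive integer for every nonzero $\phi$. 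The standard dictionary of the theory (Serre's appendix to~\cite{LauterSerre2002}, and~\cite{JordanKeetonEtAl2018}) identifies $\ker\lambda_2$ with the cokernel of the inclusion of $M$ into its $R$-dual taken with respect to $\langle\,,\,\rangle$; keeping track of the different of $R$ over $\ZZ$, whose norm is $|t^2-4q|$, this yields
\[
\det Q \;=\; \#(\ker\lambda_2)\cdot\Bigl(\tfrac{|t^2-4q|}{4}\Bigr)^{n} \;=\; \#\Delta\cdot\Bigl(\tfrac{|t^2-4q|}{4}\Bigr)^{n} \;\le\; b^2\,\Bigl(\tfrac{|t^2-4q|}{4}\Bigr)^{n}.
\]
Applying the defining property of the Hermite constant $\gamma_{2n}$ to the positive-definite quadratic form $Q$ on the rank-$2n$ lattice $M$ produces a nonzero $\phi\in M$ with $Q(\phi) \le \gamma_{2n}(\det Q)^{1/(2n)} \le \gamma_{2n}\,b^{1/n}\sqrt{|t^2-4q|/4}$. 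Composing $\phi$ with the inclusion $A_2\hookrightarrow J$ gives a nonzero homomorphism $\psi\colon E\to J$ with $\psi^*\lambda = \phi^*\lambda_2 = Q(\phi)\mu$, so Proposition~\ref{P:pullback} yields a nonconstant map $C\to E$ of degree $Q(\phi)$, which is at most the asserted bound.

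The step I expect to be the main obstacle is the discriminant identity in the third paragraph: making it precise requires careful bookkeeping among the $R$-Hermitian form, its $\ZZ$-bilinear trace form, the different of $R/\ZZ$, and the degree of the polarization $\lambda_2$, carried out without assuming that $M$ is a free $R$-module. This is exactly the computation done in~\cite{HoweLauter2012} together with the references on Hermitian modules cited in Section~\ref{ssec:hermitian}; everything else --- the structural results, the two divisibilities bounding $\#\Delta$, and the invocation of the Hermite constant --- is routine given those inputs.
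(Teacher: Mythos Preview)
Your proposal is correct and is essentially a fleshed-out version of the argument the paper invokes: the paper's proof is simply a citation to \cite[Proposition~4.1]{HoweLauter2012} (with the gluing exponent replaced by the reduced resultant $g_0(t)$), and your sketch---splitting $J$ via Proposition~\ref{P:splitting}, bounding $\#\Delta$ by combining Proposition~\ref{P:delta} with the kernel of $\pi_1+\pibar_1-t$ on $A_1$, and then applying the Hermite constant to the Hermitian lattice $\Hom(E,A_2)$ to produce a short $\phi$ whose pullback degree is read off via Proposition~\ref{P:pullback}---is exactly that argument. Your identification of the discriminant identity as the one genuinely delicate step is also accurate; that computation is carried out in \cite[\S4]{HoweLauter2012}.
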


\begin{proof}
This is~\cite[Proposition~4.1, p.~183]{HoweLauter2012}, except that we replace 
the ``gluing exponent" in the statement of that theorem with the reduced 
resultant of $h_0$ and~$x-t$, which is~$\lvert g_0(t)\rvert$. This is allowable,
because the gluing exponent divides this reduced 
resultant~\cite[Proposition~2.8, p.~178]{HoweLauter2012}.
\end{proof}

\begin{remark}
It is known that 
\[
    \gamma_2^2 = 4/3,  \quad
    \gamma_4^4 = 4,    \quad 
    \gamma_6^6 = 64/3, \quad 
    \gamma_8^8 = 256,  \quad \text{and\ }
    \gamma_{10}^{10} < 5669.
\]
General upper bounds for $\gamma_n$ are known as 
well~\cite[\S38]{GruberLekkerkerker1987}.
\end{remark}

\begin{example}
\label{ex:2-12-15}
As of this writing, the best upper bound we have for the number of points on a 
genus-$12$ curve over $\FF_2$ is~$15$, but no genus-$12$ curve with $15$ points
is known. There are only three possibilities for the real Weil polynomial of 
such a curve~\cite[Theorem~7.1, p.~203]{HoweLauter2012}, and one of them is 
$h = h_0 \cdot (x + 1)^2$, where 
\[
h_0 = (x + 2)^2 (x^2 - 2) (x^2 + 2x - 2)^3.
\]
Let us apply Theorem~\ref{T:factorEn} to this real Weil polynomial.

We compute that $r = 3$ and that $b = \gcd(3^2,-3^3) = 9$. Let $E$ be the unique
elliptic curve over $\FF_2$ with trace~$-1$. The endomorphism ring of $E$ is 
isomorphic to the ring of integer $\calO$ of $\QQ(\sqrt{-7})$, and Frobenius 
generates the endomorphism ring. If $C$ is a curve whose real Weil polynomial 
is~$h$, then Theorem~\ref{T:factorEn} shows that there is a map from $C$ to $E$
whose degree is at most
\[
\gamma_4 \, 9^{1/2} \sqrt{7/4} < 6.
\]
Now, since $\#E(\FF_2) = 4$ and $\#C(\FF_2) = 15$, there cannot be a map from 
$C$ to $E$ of degree $3$ or less; if every rational point of $E$ were to split
completely in such a cover, that would still not account for all of the rational
points on~$C$. Therefore, every curve $C$ with real Weil polynomial $h$ must
have a map of degree $4$ or $5$ to~$E$.

In fact, a closer analysis of the  Hermitian forms on $\calO\times \calO$ shows
that the conclusion of Theorem~\ref{T:factorEn} can be sharpened in this
example, and that a degree-$5$ map from $C$ to $E$ is not possible. 
(See~\cite[\S 4]{HoweLauter2012} for a discussion of this.) Thus, a curve $C$ 
with the given real Weil polynomial must have a map of degree $4$ to~$E$.
\end{example}

\subsection{Galois descent}
\label{ssec:descent}

Let $\calI$ be an isogeny class of abelian varieties over~$\Fq$, where 
$q = q_0^e$ for a prime power $q_0$ and an integer~${e>1}$. Sometimes we can
show that every variety in $\calI$ is the base extension of a variety defined 
over $\FF_{q_0}$. The following theorem gives us one situation where this 
occurs; to state the theorem we need to introduce some notation.

Let $f\in\QQ[x]$ be the Weil polynomial for $\calI$ and let $g$ be the radical
of~$f$. Frobenius acts semisimply on each variety in $\calI$, so $g$ is in fact
the minimal polynomial of Frobenius, and for each $A\in\calI$ the subring of
$(\End A)\otimes\QQ$ generated over $\QQ$ by the Frobenius is isomorphic to the
product of fields $\QQ[x]/(g)$. The subring of $\QQ[x]/(g)$ generated by 
Frobenius and Verschiebung depends only on $\calI$ and not on $A$; we denote 
this ring by $R_\calI$, and write $\pi_\calI$ and $\pibar_\calI$ for the
Frobenius and Verschiebung in $R_\calI$. By definition we have 
$R_\calI = \ZZ[\pi_\calI,\pibar_\calI]$.

\begin{theorem}
\label{T:descendA}
Suppose $\calI$ is an isogeny class of ordinary abelian varieties over~$\Fq$, 
where $q = q_0^e$. Suppose there is an element $\pi_0\in R_\calI$ such that
$\pi_\calI = \pi_0^e$, and let $f_0$ be the characteristic polynomial of $\pi_0$
acting on the varieties in~$\calI$. Then $f_0$ is the Weil polynomial of an 
isogeny class $\calI_0$ over $\FF_{q_0}$, and base extension from $\FF_{q_0}$ to
$\FF_q$ gives an equivalence between the category of abelian varieties in 
$\calI_0$ and the category of abelian varieties in~$\calI$.
\end{theorem}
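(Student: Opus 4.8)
The plan is to use Deligne's equivalence between ordinary abelian varieties over a finite field and Deligne modules, and to check that the base-extension functor corresponds, on the level of Deligne modules, to the identity functor once we track the Frobenius action correctly. Recall that the category of ordinary abelian varieties over $\Fq$ is equivalent to the category of pairs $(T, \pi_T)$, where $T$ is a finitely generated free $\ZZ$-module and $\pi_T$ is an endomorphism of $T$ satisfying the usual conditions (characteristic polynomial with the right archimedean and $p$-adic properties, and such that $\pibar_T \colonequals q/\pi_T$ is an integral endomorphism). An analogous equivalence holds over $\FF_{q_0}$, with $q$ replaced by $q_0$ throughout.

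First I would verify that $f_0$ is the Weil polynomial of an ordinary isogeny class $\calI_0$ over $\FF_{q_0}$. Since $\pi_\calI = \pi_0^e$ with $\pi_0 \in R_\calI \subseteq \QQ[x]/(g)$, the element $\pi_0$ is an algebraic integer (it lies in a finite product of number fields and its $e$-th power $\pi_\calI$ is an algebraic integer, hence so is $\pi_0$ — actually one must be a little careful, but $\pi_0 \in R_\calI$ is integral by hypothesis). Its complex conjugate $\pibar_0$ equals $q_0/\pi_0$: indeed $\pi_0 \pibar_0$ is a totally positive algebraic number whose $e$-th power is $\pi_\calI \pibar_\calI = q$, so $\pi_0 \pibar_0 = q_0$. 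The archimedean condition $|\iota(\pi_0)| = \sqrt{q_0}$ for every complex embedding $\iota$ follows by taking $e$-th roots of $|\iota(\pi_\calI)| = \sqrt{q}$. The ordinarity condition — that exactly half the roots of $f_0$ (with multiplicity) are $p$-adic units — follows from the corresponding statement for $f$, since the slopes of $\pi_0$ are $1/e$ times the slopes of $\pi_\calI$, so the slope-$0$ and slope-$1$ parts match up in the required proportions. By the Honda--Tate theorem (already invoked in the excerpt), each irreducible factor of $f_0$ then arises from a simple ordinary isogeny class over $\FF_{q_0}$.

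Next I would set up the functors. Base extension from $\FF_{q_0}$ to $\FF_q = \FF_{q_0^e}$ replaces Frobenius by its $e$-th power. On Deligne modules this is the functor sending $(T, \pi_0')$ (a Deligne module over $\FF_{q_0}$) to $(T, (\pi_0')^e)$, viewed as a Deligne module over $\FF_q$. I claim this restricts to an equivalence between the Deligne modules with characteristic polynomial a power of $f_0$ and those with characteristic polynomial a power of $f$. The key point is that, for a Deligne module $(T, \pi)$ in $\calI$, the endomorphism $\pi$ generates a subring of $\End T$ isomorphic to $\QQ[x]/(g)$, which contains $R_\calI$, and hence contains $\pi_0$; so $(T,\pi_0)$ is itself a Deligne module over $\FF_{q_0}$, whose $e$-th-power-Frobenius is exactly $(T,\pi)$. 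This gives a quasi-inverse on the level of objects. On morphisms there is nothing to do: a map of $\ZZ$-modules commuting with $\pi$ commutes with $\pi_0 \in \ZZ[\pi]$ automatically, and conversely. (Here I would use that $\pi_0$, being in $R_\calI = \ZZ[\pi_\calI, \pibar_\calI]$, is a $\ZZ$-polynomial in $\pi$ and $\pibar = q/\pi$, and $\pibar$ is an integral endomorphism of any Deligne module in $\calI$, so $\pi_0$ acts integrally.) Checking that the two composites are naturally isomorphic to the identities is then formal.

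The main obstacle, I expect, is the bookkeeping around which ring acts on what and the verification that $\pi_0$ really does act as an \emph{integral} endomorphism on every Deligne module in the isogeny class $\calI$ — i.e. that $R_\calI \subseteq \ZZ[\pi]$ inside $\End T$ for each such $T$, not merely inside $(\End A)\otimes\QQ$. This is where ordinarity and the precise definition of the Deligne-module category (with its integrality constraints on both $\pi$ and $\pibar$) are essential, and it is the step that would fail without the hypothesis $\pi_\calI = \pi_0^e$ in $R_\calI$ (rather than merely in $\QQ[x]/(g)$). Once that is in hand, transporting the equivalence of Deligne-module categories back through Deligne's theorem — and noting that base extension of abelian varieties is compatible with base extension of Deligne modules, which is part of the package in \cite[\S 4]{Howe1995} — completes the proof. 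I would also remark that the statement about $f_0$ being \emph{a} Weil polynomial (as opposed to identifying $\calI_0$ uniquely) is all that is needed, since the equivalence of categories then pins down $\calI_0$ as precisely the isogeny class whose Deligne modules have characteristic polynomial a power of $f_0$.
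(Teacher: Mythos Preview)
Your proposal is correct and follows essentially the same route as the paper: both arguments rest on Deligne's equivalence and on the hypothesis $\pi_0\in R_\calI$, which guarantees that $\pi_0$ and $\pibar_0$ act integrally on every Deligne module in~$\calI$. The paper sketches only the simple case and packages the key step as the equality of orders $\ZZ[\pi_0,\pibar_0]=\ZZ[\pi_\calI,\pibar_\calI]=R_\calI$, which makes the two categories of fractional ideals literally the same; your version unpacks this by building the quasi-inverse explicitly on Deligne modules, and your slope argument for ordinarity of $\calI_0$ is a valid alternative to the paper's coprimality argument. (One notational quibble: where you write ``$R_\calI\subseteq\ZZ[\pi]$ inside $\End T$'' you really mean that $R_\calI$ maps into $\End_\ZZ T$, which requires $\pibar$ and not just $\pi$---but you say exactly this in the surrounding text, so the intent is clear.)
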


\begin{remark}
There are versions of Theorem~\ref{T:descendA} that work for nonordinary 
varieties as well~\cite[Appendix]{LauterSerre2001}, but the statement is much
simpler for ordinary isogeny classes.
\end{remark}

\begin{proof}[Proof of Theorem~\textup{\ref{T:descendA}}]
This is part of the statement of~\cite[Theorem~5.2, p.~196]{HoweLauter2012}.
Here we will simply sketch the proof in the case where $\calI$ is simple.

Let $K\colonequals R_\calI\otimes\QQ$, so that $K$ is a CM field. By Deligne's
equivalence of categories~\cite{Deligne1969}, the category of abelian varieties
in $\calI$ is equivalent to the category of fractional $R_\calI$-ideals in~$K$. 

Since $\pi_0\pibar_0$ is a totally positive element of $K$ and 
$(\pi_0\pibar_0)^e = \pi_\calI\pibar_\calI = q = q_0^e$, we see that 
$\pi_0\pibar_0 = q_0$, so $\pi_0$ is a Weil number over~$\FF_{q_0}$, 
corresponding to a simple isogeny class $\calI_0$. The condition that $\calI$ 
is ordinary implies that $\pi_\calI$ and $\pibar_\calI$ are coprime 
in~$R_\calI$, because the ideal generated by these two elements contains both 
$q$ and the middle coefficient of~$f$, which is coprime to~$q$. Therefore 
$\pi_0$ and $\pibar_0$ are also coprime, and hence $\calI_0$ is also ordinary.

Deligne's equivalence of categories now tells us that the category of abelian 
varieties in $\calI_0$ is equivalent to the category of fractional 
$\ZZ[\pi_0,\pibar_0]$-ideals in~$K$. But the two orders $\ZZ[\pi_0,\pibar_0]$ 
and $R_\calI = \ZZ[\pi_\calI,\pibar_\calI]$ are equal, because
$\pi_0,\pibar_0\in R_\calI$ and because the two elements $\pi_\calI = \pi_0^e$
and $\pibar_\calI = \pibar_0^e$ belong to $\ZZ[\pi_0,\pibar_0]$. Therefore the
categories $\calI_0$ and $\calI$ are equivalent.

Tracing through Deligne's construction, we find that one equivalence between 
$\calI_0$ and $\calI$ is given by base extension.
\end{proof}

\begin{theorem}
\label{T:descendC}
Suppose $\calI$ is an isogeny class of $g$-dimensional ordinary abelian 
varieties over~$\Fq$, where $q = q_0^e$, and suppose there is an element 
$\pi_0\in R_\calI$ such that $\pi_\calI = \pi_0^e$. Let $f_0\in\ZZ[x]$ be the
characteristic polynomial of $\pi_0$ acting on the varieties in~$\calI$. 

If $C$ is a curve over $\Fq$ whose Jacobian lies in $\calI$, then $C$ has a 
model defined over $\FF_{q_0}$ whose Weil polynomial is either $f_0$ or 
$(-1)^g f_0(-x)$. If $e$ is odd, the model can be chosen with Weil 
polynomial~$f_0$.
\end{theorem}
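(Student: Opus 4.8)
The plan is to descend the canonically polarized Jacobian of $C$ from $\Fq$ to $\FF_{q_0}$ using Theorem~\ref{T:descendA}, and then to apply a form of Torelli's theorem valid over the non-closed field $\FF_{q_0}$ in order to recognize the descended polarized variety --- or a quadratic twist of it --- as the canonically polarized Jacobian of a curve $C_0$ over $\FF_{q_0}$. Chasing Frobenius through the equivalence of categories and through the possible twist produces the two polynomials $f_0$ and $(-1)^g f_0(-x)$ as the candidates for the Weil polynomial of $C_0$, and a parity argument eliminates the twist when $e$ is odd.

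First I would descend the polarized Jacobian. Let $(J,\lambda)$ be the canonically polarized Jacobian of $C$, so that $J\in\calI$. By Theorem~\ref{T:descendA} there is an isogeny class $\calI_0$ over $\FF_{q_0}$ with Weil polynomial $f_0$, and base extension is an equivalence between the category of abelian varieties in $\calI_0$ and the category of abelian varieties in~$\calI$; let $J_0\in\calI_0$ be the object with $(J_0)_{\Fq}\cong J$. Every abelian variety over a finite field is isogenous to its dual, so $\hat{J_0}$ lies in $\calI_0$ as well, and since duality commutes with base extension the full faithfulness of the equivalence shows that $\lambda\colon J\to\hat J$ is the base extension of a unique homomorphism $\lambda_0\colon J_0\to\hat{J_0}$. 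Being an isomorphism, being symmetric, and being a polarization can all be checked after base extension to $\overline{\FF}_q$, so $(J_0,\lambda_0)$ is a principally polarized abelian variety over $\FF_{q_0}$; moreover its base extension to $\overline{\FF}_q$ is the canonically polarized Jacobian of $C_{\overline{\FF}_q}$.

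Next I would invoke Serre's form of Torelli's theorem over the non-closed field $\FF_{q_0}$~\cite[Appendix]{LauterSerre2001}: a principally polarized abelian variety over $\FF_{q_0}$ that is geometrically the canonically polarized Jacobian of a curve is itself the canonically polarized Jacobian of a curve over $\FF_{q_0}$, unless $C_{\overline{\FF}_q}$ is non-hyperelliptic, in which case it is possible that only the quadratic twist --- the twist by $-1\in\Aut(J_0,\lambda_0)$ along $\FF_{q_0^2}/\FF_{q_0}$ --- is such a Jacobian. Either way this produces a curve $C_0$ over $\FF_{q_0}$ whose canonically polarized Jacobian is isomorphic to $(J_0,\lambda_0)$ or to its quadratic twist, and because the quadratic twist replaces Frobenius by its negative, the Weil polynomial of $C_0$ is the characteristic polynomial of $\pi_0$ or of $-\pi_0$, namely $f_0$ or $(-1)^g f_0(-x)$. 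To see that $C_0$ is a model of $C$, note that the base extension to $\Fq$ of the canonically polarized Jacobian of $C_0$ is isomorphic, as a principally polarized variety, to the base extension of $(J_0,\lambda_0)$ --- directly in the untwisted case, and in the twisted case because $\FF_{q_0^2}/\FF_{q_0}$ becomes trivial over $\Fq=\FF_{q_0^e}$ exactly when $e$ is even --- and hence to $(J,\lambda)$, the canonically polarized Jacobian of $C$; so $(C_0)_{\Fq}$ and $C$ are isomorphic by Torelli's theorem~\cite[Theorem~12.1, p.~202]{Milne1986b}.

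Finally, when $e$ is odd the twisted case cannot arise. Indeed, if $(J_0,\lambda_0)$ were not itself a Jacobian over $\FF_{q_0}$, then $C_{\overline{\FF}_q}$ would be non-hyperelliptic and the twist of $(J_0,\lambda_0)$ by the nontrivial quadratic character of $\FF_{q_0}$ would be a Jacobian; since twisting commutes with base extension and the nontrivial quadratic character of $\FF_{q_0}$ restricts to the nontrivial quadratic character of $\FF_{q_0^e}$ when $e$ is odd, the base extension $(J,\lambda)$ would then fail to be a Jacobian over $\Fq$, contradicting that it is the canonically polarized Jacobian of $C$. Hence for $e$ odd we are in the untwisted case and may take $C_0$ with Weil polynomial $f_0$. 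I expect the main difficulty to lie in the correct use of the non-closed-field Torelli theorem in the third paragraph: one must treat the hyperelliptic and non-hyperelliptic cases separately and keep careful track of the quadratic-twist bookkeeping and its behavior under base extension, the descent of $\lambda$ to an honest polarization in the second paragraph being comparatively routine.
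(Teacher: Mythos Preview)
Your proposal is correct and is exactly the argument the paper delegates to its citations: descend the polarized Jacobian to $\FF_{q_0}$ via Theorem~\ref{T:descendA}, invoke Serre's arithmetic Torelli theorem over the non-closed field $\FF_{q_0}$ (the appendix to~\cite{LauterSerre2001}) to obtain $C_0$ up to a possible quadratic twist, and track the twist obstruction through base extension to rule out the twisted case when $e$ is odd. The paper itself offers no argument beyond the references to~\cite{LauterSerre2001} and~\cite{HoweLauter2012}, and your sketch is the standard unpacking of those references.
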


\begin{proof}
See~\cite[Théorème~0, p.~35]{LauterSerre2001}, 
\cite[Theorem~5.3, p.198]{HoweLauter2012}.
\end{proof}

\begin{example}
Consider the isogeny class $\calI$ of abelian varieties over $\FF_{32}$ with
real Weil polynomial  $(x + 11)^3 (x^2 + 19x + 87)$. The radical of the Weil 
polynomial of this isogeny class is
\[
g\colonequals (x^2 + 11 x + 32)(x^4 + 19 x^3 + 151 x^2 + 608 x + 1024),
\]
and if we let $\pi$ be a root of $g$ in $R_\calI\otimes \QQ$ we can check that
$\pi = \pi_0^5$, where
\[
\pi_0\colonequals 4 \pi^3 + 99 \pi^2 + 1046 \pi + 5976 + 578 \pibar + 24 \pibar^2.
\]
The characteristic polynomial of $\pi_0$ is
\[
(x^2 + x + 2)^3(x^4 - x^3 + x^2 - 2x + 4),
\]
and using Theorem~\ref{T:descendC} we find that if $C$ is a curve over
$\FF_{32}$ with Jacobian in~$\calI$, then $C$ is the base extension of a curve 
$C_0$ over $\FF_2$ with real Weil polynomial $(x + 1)^3(x^2 - x - 3)$. But we
check that a curve with this real Weil polynomial would have to have a negative 
number of points over $\FF_8$, and therefore no such~$C_0$ --- and no
such~$C$ --- can exist.

The isogeny class $\calI$ arises when one looks for a genus-$5$ curve over 
$\FF_{32}$ with $85$ points. Unfortunately, there is another isogeny class that 
also arises in this case that we cannot yet eliminate, corresponding to the real
Weil polynomial $(x + 8) (x + 11)^4$, so at present the most we can say about 
$N_{32}(5)$ is that it is either $83$, $84$, or~$85$.
\end{example}

\begin{remark}
One example of a descent argument for a nonordinary isogeny class is given by
Howe and Lauter~\cite[{\S}5]{HoweLauter2003} as part of their proof that 
$N_8(5) \le 30$. One can see there some of the issues that arise in the 
nonordinary case.
\end{remark}

\section{Conclusion and prospects}
\label{sec:conclusion}

As our discussion in this chapter makes clear, there are many specialized tools
at our disposal to analyze isogeny classes of abelian varieties over finite
fields to see what we can say about the Jacobians they may or may not contain.
Most of these tools apply only to isogeny classes of special forms, and we are 
lucky that curves with many points often lie in isogeny classes for which the
tools are helpful. Automation is critical for this analysis; most of the recent 
improvements in upper bounds for $N_q(g)$ for specific $q$ and $g$ have involved
analyzing dozens of isogeny classes, each of which requires consideration of 
many possible splittings and many possible techniques.

There are some relatively common situations whose analysis has not yet been
automated. For example, there have been several cases arising ``in nature"
where it was necessary to derive information from the existence of a degree-$3$
map from a curve with a given real Weil polynomial to an elliptic 
curve~\cite{Howe2021,Rigato2010a,Rigato2010b}, and in each of these cases the 
analysis was done by hand. And yet, further automation will only take us so far,
because our toolkit is limited. As far as the author is aware, if we are
presented with an absolutely simple ordinary isogeny class of odd dimension 
greater than~$3$ over a prime field, and if its Weil polynomial defines a number
field containing no nontrivial roots of unity, and if the Weil polynomial
predicts a non-negative number of places of every degree, and if the place 
counts are allowed by the Oesterlé bound~\cite[Chapter~6]{Serre2020} and the 
Stohr--Voloch bound~\cite[Propositions~3.1 and~3.2, p.~15]{StohrVoloch1986},
then there are geometrically indecomposable principally polarized varieties in 
the isogeny class, but none of our current techniques will give us any further
information about curves whose Jacobians lie in the isogeny class.

In 1981, Manin~\cite{Manin1981} asked a basic question that provided some of the
initial motivation for much of the work on $N_q(g)$: ``What is the maximum 
number of points on a curve [of a given genus] over $\FF_2$?" Despite the vast
progress that has been made in filling in the online tables of bounds on 
$N_q(g)$~\cite{vanderGeerHoweEtAl2009}, when we look back at Manin's specific
question, we find that we know the exact value of $N_2(g)$ for only $21$ of the
genera between $1$ and~$50$. The first genus for which the value is unknown is
$g=12$ (see Example~\ref{ex:2-12-15}), leaving us with a simply stated question:
Is there a genus-$12$ curve over $\FF_2$ with $15$ points? 

What new ideas will we need in order to bring the answer to this question within
reach?


\nocite{DiPippoHowe2000}
\nocite{HoweLauter2007}

\bibliography{deducing}
\bibliographystyle{hplaindoi}

\end{document}